\title[An infinite surface with the lattice property]{An infinite surface with the lattice property I:\\
Veech groups and coding geodesics}
\author{W. Patrick Hooper}
\thanks{Supported by N.S.F. Postdoctoral Fellowship DMS-0803013, N.S.F. Grant DMS-1101233, and a PSC-CUNY Award (funded by The Professional Staff Congress and The City University of New York).}
\address{Department of Mathematics, City College of New York}
\email{whooper@ccny.cuny.edu}
\subjclass[2000]{37D40;37D50,37E99,32G15}
\newtheorem{theorem}{Theorem}
\newtheorem{proposition}[theorem]{Proposition}
\newtheorem{lemma}[theorem]{Lemma}
\newtheorem{remark}[theorem]{Remark}
\newtheorem{corollary}[theorem]{Corollary}
\theoremstyle{definition}
\newtheorem{definition}[theorem]{Definition}
\newtheorem{notation}[theorem]{Notation}
\newlength{\savearraycolsep}
	{\setlength{\savearraycolsep}{\arraycolsep}%
	\setlength{\arraycolsep}{#1}%
	\begin{array}{#2}}%
	{\end{array}\setlength{\arraycolsep}{\savearraycolsep}}
\def\C{\mathbb{C}}%
\def\D{\mathbb{D}}%
\def\R{\mathbb{R}}%
\def\Z{\mathbb{Z}}%
\def\RP{\mathbb{RP}}%
\def\Circ{\mathbb{S}^1} 
\def\H{\mathbb{H}} 
\def\Isom{{\mathit{Isom}}} 
\def\til{\widetilde}
\def\sm{\smallsetminus}
\def\GL{\textit{GL}}
\def\SL{\textit{SL}}
\def\SO{\textit{SO}}
\def\PGL{\textit{PGL}}
\def\PSL{\textit{PSL}}
\def\0{{\mathbf{0}}}
\def\1{{\mathbf{1}}}
\def\u{{\mathbf{u}}}
\def\G{{\mathbf{G}}}
\def\sF{{\mathcal{F}}}
\def\sL{{\mathcal{L}}}
\def\sM{{\mathcal{M}}}
\def\sT{{\mathcal{T}}}
\newcommand{\nullset}{\emptyset}
\def\hol{\mathit{hol}} 
\def\imod#1{\allowbreak\mkern10mu({\operator@font mod}\,\,#1)}
\def\dev{{\mathit{dev}}} 
\def\Aff{\mathit{Aff}} 
\def\and{{\quad \textrm{and} \quad}}
\def\G{{\mathcal G}}
\def\dev{{\mathit{dev}}} 
\def\D{{\mathbf{D}}} 
\begin{document}
\begin{abstract}
We study the symmetries and geodesics of an infinite translation surface which arises
as a limit of translation surfaces built from regular polygons, studied by Veech.
We find the affine symmetry group of this infinite translation surface, and we show
that this surface admits a deformation into other surfaces with topologically equivalent affine symmetries. 
The geodesics on these new surfaces are combinatorially the same as the geodesics on the original.
\end{abstract}

\maketitle

In this paper, we begin a systematic study of the geometric and dynamical properties of the surface $S_1$ shown below.
This surface arises from a limit of surfaces built from two affinely regular $n$-gons as $n \to \infty$. 

\begin{figure}[ht]
\begin{center}
\includegraphics[width=3in]{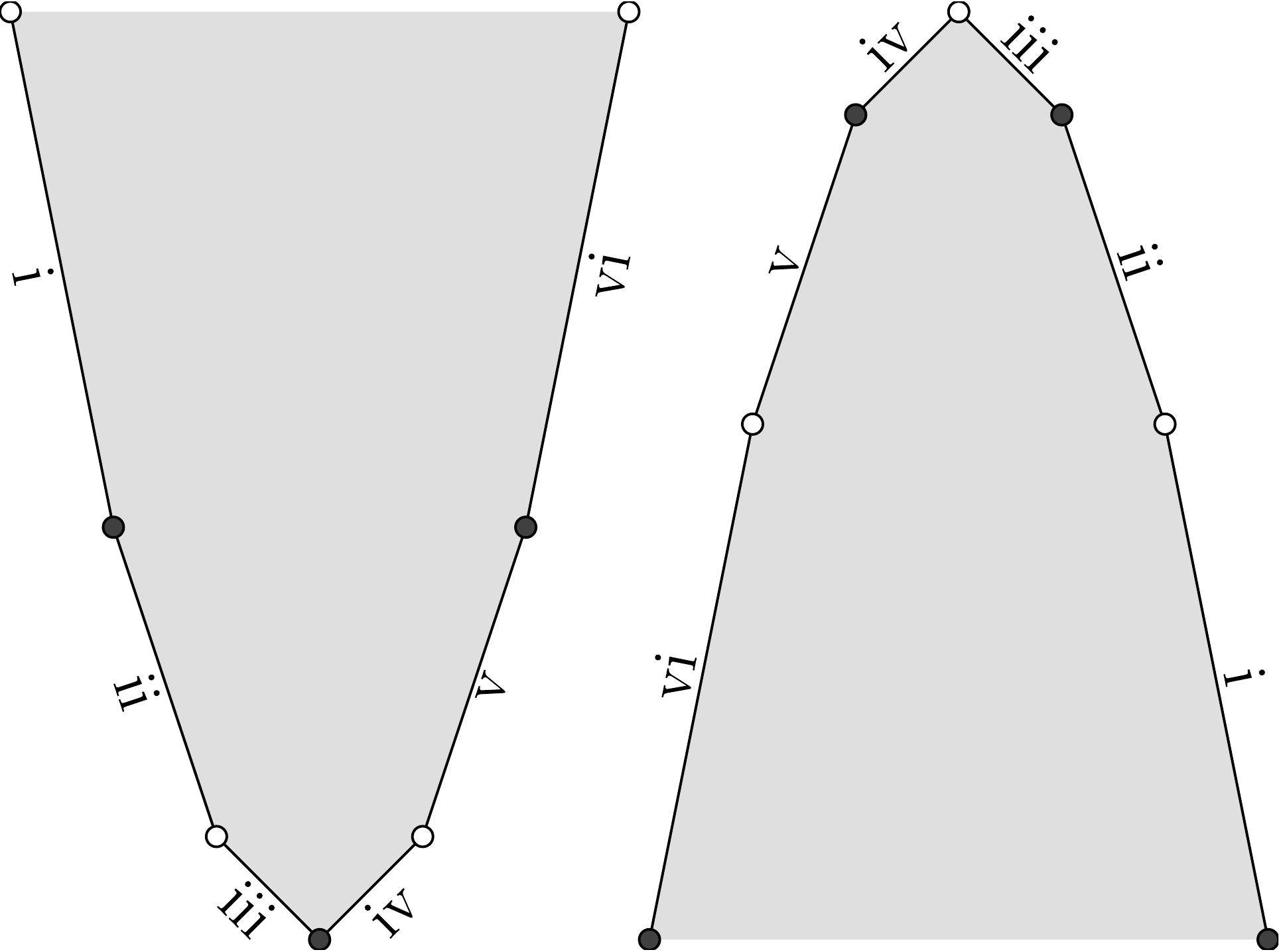}
\caption{The surface $S_1$ is built from two infinite polygons in the plane: The convex hulls of the sets
$\{(n,n^2)~:~n \in \Z\}$ and $\{(n,-n^2)~:~n \in \Z\}$. Roman numerals indicate edges glued by translations.}
\label{fig:s1}
\end{center}
\end{figure}

This study is motivated by work of Veech, \cite{V}, which shows that translation surfaces built in a similar manner from two regular polygons have special geometric and dynamical properties. We describe aspects of \cite{V} in section
\ref{sect:future}. In short, these surfaces exhibit affine symmetries analogous to the action of $\SL(2, \Z)$ on the square torus. 

The surface $S_1$ also has affine symmetries described by a lattice in $\SL(2, \R)$. Furthermore, we will explain how $S_1$ arises from a limit of Veech's surfaces built from regular polygons. However, previous geometric and dynamical theorems on such surfaces do not directly apply to $S_1$ because this surface has infinite area, infinite genus, and two cone singularities with infinite cone angle. 
This motivates the question: Does the infinite genus surface $S_1$ exhibit ``nice'' geometric and dynamical properties?
The purpose of this article is to explain some such nice properties of $S_1$.

We also aim to show that the surface $S_1$ belongs to a $1$-parameter family of surfaces, $\{S_c : c \geq 1\}$, 
with closely related geometric and dynamical properties. We mentioned above that $S_1$ is obtained as a limit of translation surfaces built from a pair of regular polygons. We obtain the family $S_c$ by analytically continuing this limiting process. See section \ref{sect:limit}.

The following points summarize the contents of this paper:
\begin{itemize}
\item We explain that the surfaces $S_c$ for $c \geq 1$ are pairwise homeomorphic, and that this homeomorphism is canonical up to isotopies which fix the singularities. (See Proposition \ref{prop:h}.)
\item We show that $S_1$ and $S_c$ have homotopic geodesics for all $c>1$. That is, given a geodesic $\gamma$ in $S_1$,
there is a geodesic $\gamma'$ in $S_c$ which is homotopic to the image of $\gamma$ under the canonical homeomorphism.
(See Theorems \ref{thm:same_geodesics} and \ref{thm:same_geodesics2} and Remark \ref{rem:Geodesics and codes}.)
\item We describe the {\em affine automorphism group} of $S_c$, $\Aff(S_c)$, for each $c \geq 1$. This is the group of homeomorphisms $S_c \to S_c$ which preserve the affine structure of $S_c$. (The group structure
of $\Aff(S_c)$ is provided by combining Theorem \ref{thm:veech_groups} with Proposition \ref{prop:bijection}.
Lemma \ref{lem:affine_automorphisms} describes the action of a collection of generators.)
\item We show that each affine automorphism of $S_c$ is isotopic to an affine automorphism of $S_1$, and vice versa. (See Theorem \ref{thm:isotopic_affine_action}.)
\item We show that the surface $S_1$ has the {\em lattice property}, i.e., the group of derivatives of orientation preserving affine automorphisms of $S_1$ form a lattice in $SL(2,\R)$. This group is the congruence two subgroup of $\SL(2, \Z)$.
(See Corollary \ref{cor:lattice property}.)
\end{itemize}

This paper is structured as follows. In the following section, 
we provide context for this problem and suggest directions of future work.
In section \ref{sect:limit}, we construct the surfaces $S_c$, and explain how they relate to regular polygons. 
Then in section \ref{sect:background}, we provide necessary background on the subject of translation surfaces 
in the context of infinite surfaces. In section \ref{sect:results}, we give rigorous statements of the results mentioned above. We spend the remainder of the paper proving these statements.

\section{Context and future work}
\label{sect:future}
Motivation to study this particular translation surface comes from the classical theory of (closed) translation surfaces. We briefly outline some of this theory to explain this motivation. 
For a more detailed introduction see one of the surveys \cite{MT98}, \cite{Zorich06}, or \cite{Y10}.

Classically, a {\em translation surface} is a Riemann surface $X$ equipped with a holomorphic $1$-form, denoted $(X,\omega)$. The $1$-form provides local charts to $\C$ which are well defined up to translations. We can use these charts to define a metric on the surface by pulling back the metric from the plane. Near the zeros of $\omega$, this metric has cone singularities whose cone angles are multiples of $2\pi$. The direction of a vector is invariant under the geodesic flow on $(X,\omega)$. We define the {\em straight-line flow in direction $\theta$} on $(X,\omega)$ to be the flow defined in local coordinates by 
$F^t_\theta(z)=z+t e^{i \theta}.$
Trajectories of this flow are unit speed geodesics traveling in the direction $\theta$. 

We may fix the genus, and consider the moduli space $\Omega$ of translation surfaces. There is a well-known $\SL(2,\R)$ action on this space, which is closely related to the Teichm\"uller geodesic flow on the moduli space $\sM$ of Riemann surfaces. It is now well understood that an the structure $\SL(2,\R)$-orbit of $(X,\omega)$ inside $\Omega$ can be used to obtain asymptotic information about the geodesic flow on $(X,\omega)$. Veech's theorem provides a striking example of this phenomenon, involving the case when 
$(X,\omega)$ has the lattice property. As above, this means that $(X,\omega)$ is stabilized by a lattice $\Gamma \subset  \SL(2,\R)$. Equivalently, the $\SL(2,\R)$-orbit of $(X,\omega)$ descends to surface in $\sM$ which is Teichm\"uller isometric to the hyperbolic plane modulo $\Gamma$.
\begin{theorem}[Veech Dichotomy \cite{V}]
If $(X,\omega)$ has the lattice property, then for every $\theta$ the straight-line flow $F^t_\theta$
is either uniquely ergodic or {\em completely periodic} (every forward or backward trajectory which does not hit a singularity is periodic). 
\end{theorem}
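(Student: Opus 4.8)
The plan is to run Veech's original argument, which reduces the dynamical dichotomy to the geometry of a finite-area hyperbolic orbifold together with Masur's unique ergodicity criterion. After a rotation (which conjugates the Veech group but preserves the lattice property and both conclusions) I may assume $\theta$ is the vertical direction. Let $\Gamma\subset SL(2,\R)$ be the Veech group. As recalled above, the $SL(2,\R)$-orbit of $(X,\omega)$ is Teichm\"uller-isometric to the finite-area hyperbolic orbifold $\H/\Gamma$; under Veech's dictionary the space of directions on $(X,\omega)$ is identified with $\partial\H$ in such a way that a direction is fixed by a parabolic element of $\Gamma$ exactly when the corresponding boundary point is a cusp of $\H/\Gamma$, and the Teichm\"uller geodesic $t\mapsto g_t\cdot(X,\omega)$, with $g_t=\mathrm{diag}(e^t,e^{-t})$, projects to a geodesic ray $\rho$ in $\H/\Gamma$ whose endpoint $\xi\in\partial\H$ is the point attached to the vertical direction.

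The first step is the elementary remark that, $\H/\Gamma$ having finite area, $\rho$ satisfies exactly one of two mutually exclusive alternatives: either (a) $\rho$ eventually leaves every compact set, which for a finite-area orbifold forces it into a cusp neighborhood and makes $\xi$ a parabolic fixed point of $\Gamma$; or (b) $\rho$ returns infinitely often to a fixed compact set $K\subset\H/\Gamma$. I would check (routine) that (a) and (b) correspond precisely to $\theta$ being, respectively, a parabolic direction or not.

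In case (b) I would invoke Masur's criterion: if the forward Teichm\"uller geodesic $\{g_t\cdot(X,\omega):t\ge 0\}$ does not diverge in the moduli space, then the vertical foliation of $(X,\omega)$ is uniquely ergodic. Here $\rho$ returns infinitely often to $K$, so $g_t\cdot(X,\omega)$ returns infinitely often to the compact image of $K$ and hence does not diverge; therefore $\theta$ is uniquely ergodic, which is the first alternative of the theorem. I expect this to be the crux: Masur's criterion is the one genuinely deep, non-formal input, and the rest of the proof is arranged precisely to feed it.

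In case (a), $\xi$ is fixed by a parabolic $\gamma\in\Gamma$; lift $\gamma$ to a parabolic affine automorphism $\phi$ of $(X,\omega)$ with $D\phi$ fixing the vertical direction, say $D\phi=\begin{pmatrix}1&0\\ s&1\end{pmatrix}$ with $s\ne 0$. I would then show $\theta$ is completely periodic by the usual structure argument: the vertical foliation of the abelian differential splits into finitely many periodic (cylinder) and minimal components; $\phi$ permutes these components and the finitely many vertical saddle connections, so some power $\phi^N$ — still with $D\phi^N$ a nontrivial vertical parabolic — fixes each component and each vertical saddle connection; and no minimal component can occur, since a minimal component carrying a nontrivial parabolic affine automorphism fixing its foliation direction is impossible, the parabolic acting unipotently on the finite-dimensional simplex of transverse invariant measures and forcing a fixed rational structure incompatible with minimality. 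This is exactly Veech's theorem that parabolic directions are completely periodic, and one could simply cite it outright. Hence $(X,\omega)$ is a finite union of vertical cylinders, so every non-singular vertical trajectory lies in a cylinder interior and is periodic — the second alternative. Since (a) and (b) are exhaustive, the dichotomy holds for every $\theta$.
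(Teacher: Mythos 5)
This theorem is stated in the paper purely as background and is not proved there --- it is quoted directly from Veech \cite{V}, so there is no in-paper argument to measure yours against. Your outline is the standard modern proof and is sound: reduce to the vertical direction, note that the projected Teichm\"uller ray in the finite-area orbifold $\H^2/\Gamma$ either recurs to a compact set or escapes into a cusp, feed recurrence into Masur's criterion to get unique ergodicity, and handle the cusp case via the theorem that parabolic directions are completely periodic. Two remarks. First, the one genuinely thin spot is your exclusion of minimal components in the parabolic case: the claim that a unipotent action on the simplex of transverse invariant measures ``forces a fixed rational structure incompatible with minimality'' is not an argument as stated --- the transverse measure $|dx|$ is itself fixed by the parabolic, so a fixed point of that action is not by itself contradictory. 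The standard completion argues instead via discreteness of the set of holonomy vectors: a minimal component yields saddle connections with direction tending to vertical and horizontal holonomy component tending to $0$, and applying suitable powers of the parabolic produces saddle connections of length tending to $0$, which is impossible on a closed translation surface; alternatively one cites Veech's \S 9 outright, as you offer to do (the paper itself records only the easy converse direction, cylinders with commensurable moduli giving a parabolic, as Proposition \ref{prop:parabolic}). Second, a historical caveat: Masur's criterion postdates Veech's 1989 paper, so this is not Veech's original route to the unique ergodicity half, though it is the proof found in the surveys the paper cites, e.g.\ \cite{MT98}. Neither point undermines the correctness of your plan.
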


In this paper, we see that our surface $S_1$ has the lattice property. But, it is not a closed surface,
so Veech's theorem does not apply. From results in this paper, it follows that when the direction $\theta$ has rational slope, either $F^t_\theta$ is completely periodic, or the the flow is dissipative and the surface
decomposes into infinite strips in direction $\theta$. For example, the horizontal direction is completely periodic, while the vertical direction decomposes into strips. 

We would like an ergodic theoretic understanding of the straight line flow in directions $\theta$ of irrational slope. 
Because the system is non-compact, it is too much to hope for unique ergodicity. Results in this paper imply
that a certain coding spaces of geodesics are identical for $S_1$ and $S_c$ for $c > 1$. 
See Theorem \ref{thm:same_geodesics2}.
Further work carried
out in the preprint \cite{Hinf} implies that in directions of irrational slope, there is an orbit 
equivalence between the flow $F^t_\theta$
on $S_1$ and a straight line flow in a different direction $\theta'$ on each surface $S_c$ for each $c \geq 1$.
This further work rests on some topological arguments and proof that the flows $F^t_\theta$ on $S_1$ 
and $F_{\theta'}^t$ on $S_c$ are recurrent, which is beyond the scope of this paper. 
Lebesgue measure on $S_c$ then gives rise to alternate $F^t_\theta$ invariant measures on $S_1$ via a pullback operation. 
We conjecture that all $F^t_\theta$-invariant ergodic measures arise in this way. Similar results are obtained for certain directions on a related class of surfaces in \cite{Hinf}, however we were unable to prove this result for the surface $S_1$. We hope to improve the argument to apply to $S_1$ in the future. 

Other examples of non-compact translation surfaces with the lattice property arise from covering constructions. See \cite{HS09} and \cite{HW10}, for instance. Using the work of \cite{ANSS02},
infinite branched cover of a torus is exhibited in \cite{HHW10}, where a trichotomy is exhibited for the Teichm\"uller flow. In every direction $\theta$ of rational slope, the straight-line flow is either completely periodic or the flow is dissipative and the surface decomposes into two strips. In directions of irrational slope, the invariant measures of $F^t_\theta$ are classified. These measures all arise from a similar construction involving another $1$-parameter family of translation surfaces with isotopic geodesics. I believe
the surface described in \cite{HHW10} is the only infinite translation surface for which such a result is known. The paper \cite{Hinf} provides an alternate proof. 

Our understanding of the ergodic theory of straight-line flows on infinite cyclic branched covers of translation surfaces is developing rapidly. See \cite{HWarxiv12}, \cite{FUpreprint}, \cite{RTarxiv11} and \cite{RTarxiv12} for instance. 

Finally, it is worth noting that many more basic facts become false or non-trivial in the setting of infinite translation surfaces. In the sequel to this paper \cite{Higl2}, the author will investigate the dynamical behaviour of hyperbolic affine automorphisms of the surface $S_1$. We will show that the action of a hyperbolic affine automorphism $\widehat{H}:S_1 \to S_1$ is non-recurrent in the sense that the conclusion of the Poincar\'e recurrence theorem fails to be true. Nonetheless, the automorphism satisfies a mixing-type result. Then there is a constant $\kappa$ depending on $\widehat{H}$ so that for every pair of cylinders, 
$$\lim_{m \to \infty} m^{\frac{3}{2}} \text{Area}\big(\widehat{H}^m(A) \cap B\big)=  \kappa \text{Area}(A) \text{Area}(B).$$
This result is also discussed in the preprint \cite{Higl}.

The surface $S_1$ also arises from a limiting process involving translation surfaces constructed from irrational polygonal billiard tables. The author hopes to develop this connection in a future paper.

\section{The limiting process}
\label{sect:limit}
Here is a dynamical way to describe a regular $n$-gon. Consider the rotation given by
$$R_t=\left[ \begin{array}{cc}
\cos t & - \sin t \\
\sin t & \cos t
\end{array} \right] \in \SO(2,\R).$$
The regular $n$-gon is the convex hull of the orbit of the point $(1,0)$ under the group generated by the rotation $R_{\frac{2 \pi}{n}}$. 

In order to take a limit we conjugate this rotation by the affine transform 
$C_t:(x,y)\mapsto(\frac{y}{\sin t}, \frac{x-1}{\cos t - 1} )$. The purpose of
$C_t$ is to normalize three vertices of the polygons. We have
$$C_t(1,0)=(0,0), ~~
C_t(\cos t, \sin t)=(1,1), ~~ \textrm{and} ~~
C_t(\cos t, -\sin t)=(-1,1).$$
Setting $c=\cos t$ and defining $T_c=C_t \circ R_t \circ C_t^{-1}$ yields
the affine map $T_c:\R^2 \to \R^2$ given by
\begin{equation}
\label{eq:generalized_rotation}
T_c:(x,y) \mapsto \big(c x+(c-1) y+1, (c+1)x+cy+1\big).
\end{equation}

\begin{figure}[b]
\begin{center}
\includegraphics[width=4in]{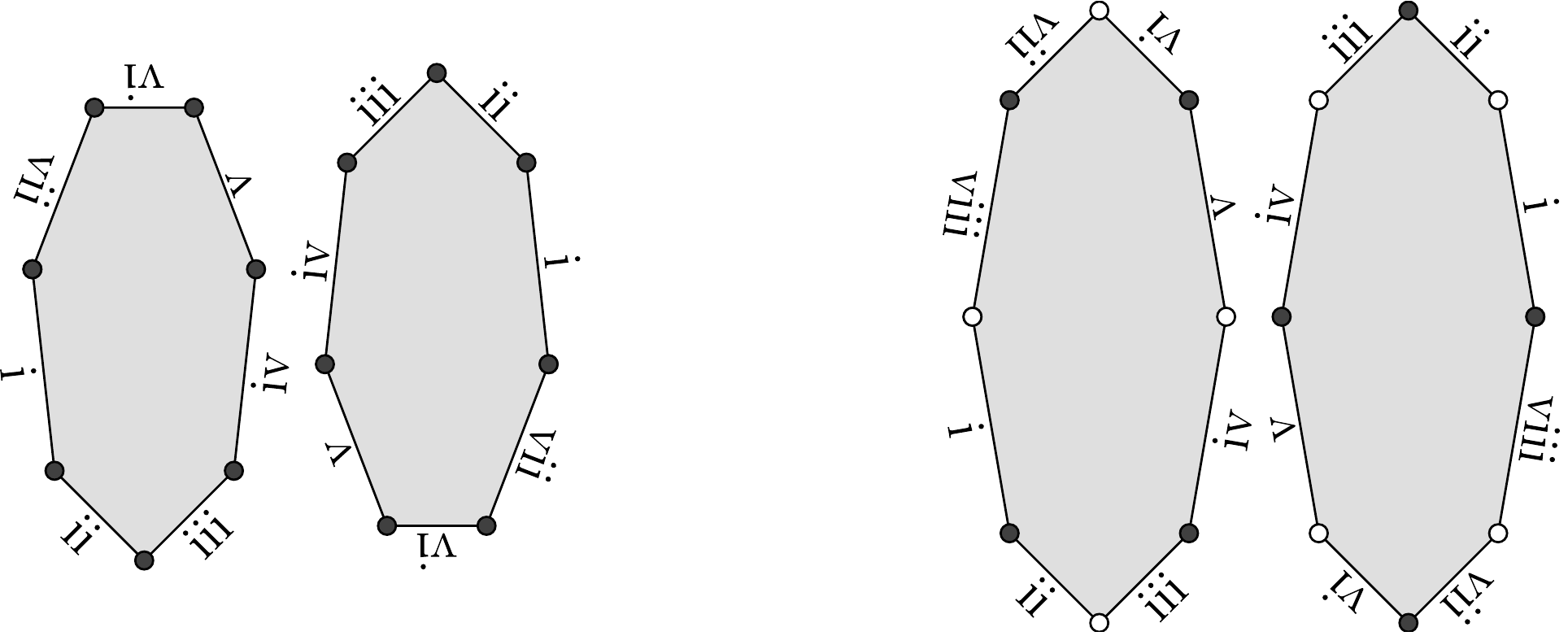}
\caption{The translation surface $S_{\cos \frac{2\pi}{7}}$ and $S_{\cos \frac{\pi}{4}}$ are built from pairs
of affinely regular polygons.}
\label{fig:veechsurfaces}
\end{center}
\end{figure}

Let $Q_c^+$ be the convex hull of the set of points $\{P_{c}^k=T_c^k (0,0)\}_{k \in \Z}$.
For $c=\cos \frac{2 \pi}{n}$, $Q_c^+$ is an affinely regular $n$-gon. 
For $c=1$ the collection of forward and backward orbits of $(0,0)$ is the set of points 
$\{(n,n^2) ~|~n \in \Z\}$, the integer points on the parabola
$y=x^2$. Finally for $c>1$,
the orbit of $(0,0)$ lies on a hyperbola. Assume $c=\cosh t$. 
Up to an affine transformation, the orbit of $(0,0)$ is $\{ (\cosh nt, \sinh nt) ~|~n \in \Z\}$.

We will use $Q_c^+$ to build our translation surfaces.
Let $Q_c^-$ be the image of $Q_c^+$ under a rotation
by $\pi$ around the origin. Each edge in $Q_c^+$ is parallel to its image in $Q_c^-$.
We identify each edge of $Q_c^+$ to its image edge in $Q_c^-$ by translation (rather than rotation). 
We call the resulting translation surface $S_c$. See figure \ref{fig:veechsurfaces} for some of the cases with $c < 1$. The case $S_1$
is drawn in figure \ref{fig:s1}, and $S_{\frac{5}{4}}$ is shown in figure \ref{fig:surface_cylinders}.

Observe that for each $k$, the map $c \mapsto P_{c}^k=T_c^k (0,0)$ is continuous. For this reason, we can think of 
the surface $S_1$ as a limit of the surfaces $S_{\cos \frac{2 \pi}{n}}$ as $n \to \infty$ and $\cos \frac{2 \pi}{n} \to 1$. 
Similarly, we view $c \mapsto S_c$ for $c \geq 1$ as a continuous deformation of translation surfaces. Concretely, we have the following:

\begin{proposition}[A family of homeomorphisms]
\label{prop:h}
There is a family of homeomorphisms $h_{c, c'}: S_{c} \to S_{c'}$
defined for $c \geq 1$ and $c' \geq 1$ which satisfy the following statements.
\begin{itemize}
\item $h_{c,c}$ is the identity map, and $h_{c,c'} \circ h_{c', c''}=h_{c,c''}$. 
\item $h_{c, c'}$ sends singular points to singular points.
\item $h_{c,c'}(Q_c^+)=Q_{c'}^+$ and $h_{c,c'}(Q_c^-)=Q_{c'}^-$.
\item Let $B$ be the bundle of the surfaces with singularities removed, $S_c \smallsetminus \Sigma$, over the ray $\{c~:~c \geq 1\}$. This
bundle is metrized to be locally isometric to $\R^3$. The map $B \times \{c'~:~c' \geq 1\} \to B$ which sends
the pair consisting of a point $x \in S_c$ and a $c' \geq 1$ to the point $h_{c,c'}(x)\in S_{c'}$ is continuous in the metric topology.
\end{itemize} 
\end{proposition}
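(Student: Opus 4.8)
The plan is to build the homeomorphisms $h_{c,c'}$ explicitly by first constructing, for each $c \geq 1$, a canonical homeomorphism $f_c \colon S_1 \to S_c$ that is piecewise affine with respect to a fixed combinatorial decomposition of the two polygons, and then setting $h_{c,c'} = f_{c'} \circ f_c^{-1}$. The cocycle identity $h_{c,c'} \circ h_{c',c''} = h_{c,c''}$ and $h_{c,c} = \mathrm{id}$ are then automatic, so the real content is the construction of the $f_c$ together with continuity in $c$. To construct $f_c$, I would first fix a triangulation (or more generally a polygonal decomposition) $\mathcal{T}$ of $Q_1^+$ whose vertex set is exactly $\{P_1^k : k \in \Z\}$ — for instance the ``fan'' triangulation joining every vertex $P_1^k$ to a fixed reference vertex, or the triangulation by triangles $P_1^{k-1} P_1^k P_1^{k+1}$ together with a fan on the remainder; any choice works as long as it is combinatorially invariant as $c$ varies. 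Since $P_c^k = T_c^k(0,0)$ depends continuously on $c$ (and the combinatorial type of the convex hull is independent of $c$ for $c \geq 1$, as all points lie on a strictly convex parabola/hyperbola arc), the same combinatorial triangulation $\mathcal{T}_c$ makes sense in $Q_c^+$, and on each triangle there is a unique affine map sending the vertices of the $\mathcal{T}_1$-triangle to the corresponding vertices of the $\mathcal{T}_c$-triangle. These affine maps agree on shared edges (they are determined by the two shared vertices), so they glue to a homeomorphism $Q_1^+ \to Q_c^+$; doing the same (via the rotation by $\pi$) on $Q_c^-$ and checking compatibility with the edge identifications — which holds because corresponding edges of $Q^+$ and $Q^-$ are translates and the piecewise-affine map respects this vertex-for-vertex — yields $f_c \colon S_1 \to S_c$ sending $Q_1^\pm$ to $Q_c^\pm$ and singularities to singularities. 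Note one subtlety: the polygons are infinite, so I must check that $f_c$ is a genuine homeomorphism at the two cone points of infinite angle; since near each singularity the map is a locally finite union of affine pieces converging uniformly to the identity-type behavior, continuity and properness there follow from the continuity of $c \mapsto P_c^k$ uniformly on compact sets together with a bare-hands check that the ``ends'' are matched.

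Next I would address the last bullet about continuity of the total map on the bundle $B \times \{c' \geq 1\} \to B$. The point is that $(x, c') \mapsto h_{c,c'}(x)$, for $x \in S_c$, should be continuous jointly in the metric topology on $B$ (which is locally isometric to $\mathbb{R}^3$, two ``surface'' directions and one ``$c$'' direction). Writing everything through the charts $f_c$, it suffices to show that $(x, c) \mapsto f_c(x) \in S_c$ is continuous from $S_1 \times \{c \geq 1\}$ into $B$, and that $c \mapsto f_c^{-1}$ is continuous in the same sense — the composition is then continuous by the group law and a standard argument. On each fixed triangle of $\mathcal{T}_1$, the map $(x,c) \mapsto f_c(x)$ is given by an affine map whose coefficients are polynomial (in fact explicit rational) functions of the vertex coordinates $P_c^k$, hence continuous — indeed smooth — in $(x,c)$; since the triangulation is locally finite and the triangles vary continuously, the only thing to verify is that this local-chart continuity patches to continuity in the metric topology of $B$ across the boundaries between triangles and across the polygon-edge identifications, which is where I would invoke the fact that the pieces agree on overlaps. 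The inverse direction is symmetric once one observes that on each triangle the affine map is invertible with inverse depending continuously on $c$ (the relevant Jacobian is bounded away from $0$ on compacta in $c$, as the triangles are nondegenerate for all $c \geq 1$).

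The main obstacle, I expect, is not the affine bookkeeping but controlling the behavior uniformly near the two infinite-angle cone points, and making the ``metric topology on $B$'' statement precise rather than merely pointwise. Because $S_c$ has infinite area and the vertices $P_c^k$ escape to infinity as $|k| \to \infty$, one has to be careful that $h_{c,c'}$ and its inverse are continuous (and proper) in a way that is uniform on neighborhoods of the singularities, and that the bundle metric — built by gluing copies of $\mathbb{R}^3$ — is actually complete and the map is continuous in it near those points. I would handle this by choosing the triangulation so that near each singularity it has a clean periodic-in-$k$ combinatorial pattern (e.g.\ successive triangles $P^{k-1}P^kP^{k+1}$), observing that on the $k$-th such triangle the affine comparison map $f_c$ has derivative converging to a fixed linear map as a function of $k$ at a controlled rate (governed by the eigenvalues of $T_c$), and deducing equicontinuity on a cylindrical neighborhood of the cusp; then the metric-topology continuity is a consequence. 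Everything else is a routine verification that the explicit formula \eqref{eq:generalized_rotation} gives vertices depending continuously — indeed real-analytically — on $c$, which is immediate.
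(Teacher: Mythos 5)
Your proposal is correct and follows exactly the route the paper takes: the paper's proof is the one-line statement that one triangulates each $Q_c^\pm$ in the same combinatorial way and defines $h_{c,c'}$ piecewise so that it affinely maps triangles to triangles. Your write-up simply fills in the details (the cocycle trick via $f_c\colon S_1\to S_c$, compatibility with the edge gluings, and continuity in $c$ near the infinite-angle singularities) that the paper leaves to the reader.
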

\begin{proof}
To construct such a family of maps, we triangulate each $Q_c^\pm$ in the same combinatorial way, and then define
$h_{c,c'}$ piecewise, so that it affinely maps triangles to triangles.
\end{proof}

\begin{figure}[t]
\begin{center}
\includegraphics[width=4in]{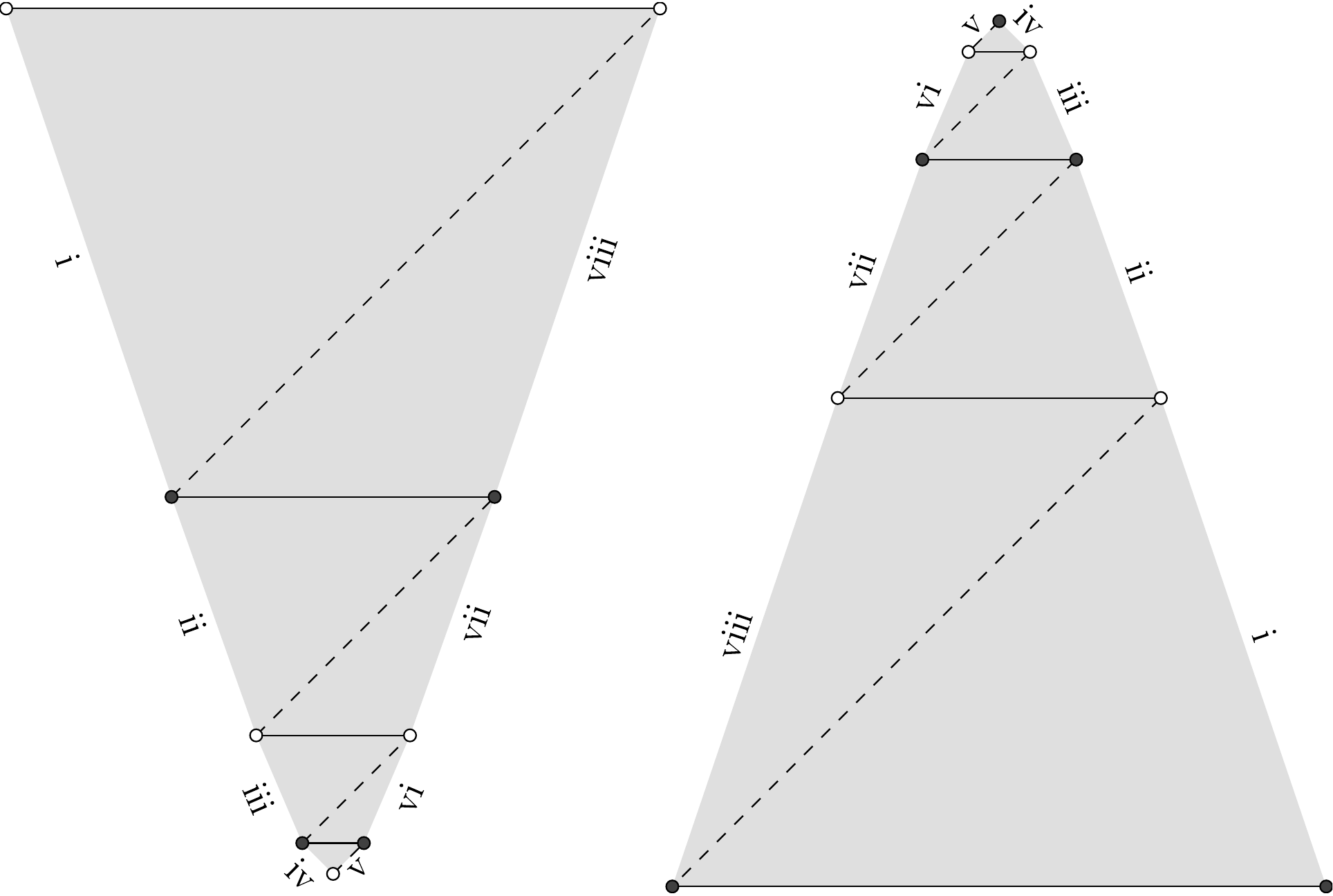}
\caption{The surface $S_c$ with $c=\frac{5}{4}$ is shown with some geodesic segments joining singularities.
}
\label{fig:surface_cylinders}
\end{center}
\end{figure}

\section{Background on translation surfaces}
\label{sect:background}
Here we will briefly introduce some essential ideas in the subject of translation surfaces
in the context necessary to understand this paper. The treatment here differs from that of section \ref{sect:future}.
We strive for an elementary exposition, while emphasizing non-compact translation surfaces.

A {\em translation surface} $S$ is a collection of polygons in the plane with edges glued pairwise by translations. 
We insist that $S$ be connected, which implies that the collection of polygons is at most countable.
Any point in the interior of a polygon or in the interior of an edge has a neighbourhood with an injective coordinate chart to the plane, which is canonical up to post composition with a translation. 
Vertices of polygons are identified in $S$. We call the points of $S$ that arise in this way {\em singularities}. 
A singularity can be a cone singularities with cone angle which is an integer multiple of $2\pi$. Infinite cone angles can arise if infinitely many polygons are used (as for $S_1$).

Suppose $S$ is a translation surface and $\theta \in \R/2\pi \Z$ is a  direction. The {\em straight-line flow} on $S$ in the direction $\theta$ 
is the flow $F^t_\theta$ given in local coordinates by 
$$F^t_\theta(x,y)=(x,y)+t (\cos \theta, \sin \theta).$$
This flow is defined starting at any non-singular point of $S$. 
A forward trajectory has one of three possible behaviours. It might be that $F^t_\theta(x,y)$ is defined for all $t>0$.
It could be that there is a $t>0$ so that $F^t_\theta(x,y)$ is singular. In this case we don't define 
$F^{t'}_\theta(x,y)$ for $t'>t$. Finally, it could be that $F^t_\theta(x,y)$ crosses infinitely many edges
in finite time. In this case, the trajectory is defined only prior to the accumulation point of crossing times.
The same three possibilities hold for backward trajectories. 
The surfaces $S_c$ for $c \geq 1$ are all complete, so this third possibility never happens for this family. But, our arguments
work in this more general setting. 

Let $S$ and $S'$ be translation surfaces. A homeomorphism $\widehat A:S \to S'$ is called an {\em affine} if
in each local coordinates chart $\psi$ is of the form
$$\psi(x,y)=(a x+ by+t_1, cx+dy+t_2) \textrm{ with $A=\left[\begin{array}{rr} a & b \\ c & d\end{array}\right] \in \GL(2, \R)$ and $t_1,t_2 \in \R$}.$$
The constants $t_1$ and $t_2$ may depend on the chart. Because the transition functions are translations, the matrix $A$ is an invariant of $\psi$. 
We call this matrix the {\em derivative}, $\D(\psi)=A \in \GL(2, \R)$. 

There is a natural action of $\GL(2, \R)$ on translation surfaces. If $A \in \GL(2, \R)$ and $S$ is a translation surface, 
we define $A(S)$ by composing each coordinate chart with the corresponding linear map $A: \R^2 \to \R^2$. 

An {\em affine automorphism} of a translation surface $S$ is an affine homeomorphism $\widehat A:S \to S$. 
The collection of all affine automorphisms of $S$ form a group, called the affine automorphism group $\Aff(S)$. 
The group $\D\big(\Aff(S)\big) \subset \GL(2,\R)$ is called the {\em Veech group} of $S$ and is denoted $\Gamma(S)$. 
An alternate definition of the Veech group is given by 
$$\Gamma(S)=\{A \in \GL(2, \R) ~:~ \textrm{$\exists$ an affine homeomorphism $\psi: S \to A(S)$ with $\D(\psi)=I$}\}.$$

\section{Results}
\label{sect:results}

The following theorem describes the Veech groups of $S_c$.

\begin{theorem}[Veech groups]
\label{thm:veech_groups}
The Veech groups $\Gamma(S_c)\subset \GL(2,\R)$ for $c \geq 1$
are generated by the involutions $-I$,
$$A_c=\left[\begin{array}{cc}
-1 & 0 \\
0 & 1
\end{array}\right],
\quad
B_c=
\left[\begin{array}{cc}
-1 & 2 \\
0 & 1
\end{array}\right],
\quad \textrm{and} \quad
C_c=
\left[\begin{array}{cc}
-c & c-1 \\
-c-1 & c
\end{array}\right].
$$
\end{theorem}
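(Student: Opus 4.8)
The theorem has two halves: that each of $-I, A_c, B_c, C_c$ is the derivative of an affine automorphism of $S_c$, and that these are all of them. The first half I would handle by exhibiting explicit automorphisms; the second, which is where the real work lies, by identifying a rigid structure that $\Gamma(S_c)$ must preserve. I describe each in turn, then the main obstacle.

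\emph{The generators lie in the Veech group.} The central symmetry $z\mapsto -z$ interchanges $Q_c^+$ and $Q_c^-$ and respects the edge identifications, so $-I\in\Gamma(S_c)$. The reflection $(x,y)\mapsto(-x,y)$ realizes $A_c$: from formula (\ref{eq:generalized_rotation}) one checks $A_c\circ T_c\circ A_c=T_c^{-1}$, so $A_c$ preserves the vertex set $\{P_c^k\}$ (sending $P_c^k$ to $P_c^{-k}$), preserves each $Q_c^\pm$ while reversing the cyclic order of its edges, and descends to $S_c$. The crucial map is a ``generalized rotation'' $\widehat\rho$ with derivative $T_c^{\mathrm{lin}}=\left[\begin{smallmatrix} c & c-1\\ c+1 & c\end{smallmatrix}\right]$ (the linear part of $T_c$): define $\widehat\rho$ to be $z\mapsto T_c^{\mathrm{lin}}z+(1,1)$ on $Q_c^+$ and $z\mapsto T_c^{\mathrm{lin}}z-(1,1)$ on $Q_c^-$. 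Since $T_c=T_c^{\mathrm{lin}}+(1,1)$ and $T_c(P_c^k)=P_c^{k+1}$, each of $Q_c^\pm$ maps onto itself with vertices shifted by one; and the identity $T_c^{\mathrm{lin}}(t_e)-(2,2)=t_{e'}$ for the gluing translations $t_e=-(P_c^j+P_c^{j+1})$ shows that $\widehat\rho$ respects the edge identifications, so $\widehat\rho$ descends to an affine automorphism and $T_c^{\mathrm{lin}}\in\Gamma(S_c)$. (For $c=\cos\frac{2\pi}{n}$ this is Veech's rotational symmetry; for $c\ge1$ it is parabolic or hyperbolic, reflecting the theme of the paper.) Consequently $C_c=T_c^{\mathrm{lin}}A_c=\D(\widehat\rho\circ\widehat A_c)\in\Gamma(S_c)$. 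Finally $B_c=A_c\cdot\left[\begin{smallmatrix}1&-2\\0&1\end{smallmatrix}\right]$, and the parabolic $\left[\begin{smallmatrix}1&-2\\0&1\end{smallmatrix}\right]$ is realized by the simultaneous Dehn twist in the horizontal cylinder decomposition of $S_c$; this last step requires exhibiting the horizontal cylinders explicitly and checking that their moduli are commensurable with minimal common twist $\left[\begin{smallmatrix}1&\pm2\\0&1\end{smallmatrix}\right]$.

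\emph{The Veech group is no larger.} First take $c=1$. Because $S_1$ is built from polygons with vertices at integer points of $y=x^2$ and all gluing translations $t_e=-(P_1^j+P_1^{j+1})$ lie in the index-two sublattice $\Lambda=\{(p,q)\in\Z^2 : p\equiv q \pmod 2\}$, every saddle connection of $S_1$ has holonomy in $\Lambda$; and the two polygon edges at the origin already give the holonomies $(1,1)$ and $(1,-1)$, which span $\Lambda$. Hence $\D$ carries $\Aff(S_1)$ into $\mathrm{Stab}_{\GL(2,\R)}(\Lambda)\cong\GL(2,\Z)$; in particular $\Gamma(S_1)$ is discrete and $\Gamma(S_1)\subseteq\GL(\Lambda)$, a group in which $\langle -I, A_1, B_1, C_1\rangle$ is exactly the level-two principal congruence subgroup $\widehat\Gamma(2)=\{M\in\GL(2,\Z): M\equiv I\pmod 2\}$, of index six. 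To reduce $\GL(\Lambda)$ to $\widehat\Gamma(2)$ I would use the three distinguished rational directions, corresponding to the three cusps of $\widehat\Gamma(2)$, one of which (horizontal) is completely periodic and one of which (vertical) decomposes into infinite strips. Under a linear map the multiset of moduli of a cylinder decomposition is rescaled by a single constant, so an affine automorphism sends the horizontal cylinder decomposition to another cylinder decomposition with the same moduli up to scale; I would show that the only directions with this property lie in the $\widehat\Gamma(2)$-orbit of the horizontal direction, and that the stabilizer of the horizontal direction inside $\GL(\Lambda)$ is exactly $\langle\left[\begin{smallmatrix}1&2\\0&1\end{smallmatrix}\right],A_1,-I\rangle\subseteq\widehat\Gamma(2)$. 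Composing a hypothetical $M\in\Gamma(S_1)$ with a suitable element of $\widehat\Gamma(2)$ so that it fixes the horizontal direction then forces $M\in\widehat\Gamma(2)$. For general $c\ge1$ I would either transport this conclusion along the canonical homeomorphisms of Proposition \ref{prop:h}, using that $\Aff(S_c)$ and $\Aff(S_1)$ realize the same isotopy classes (Theorem \ref{thm:isotopic_affine_action}) and tracking how derivatives transform on a generating set (Lemma \ref{lem:affine_automorphisms}), or run the parallel argument directly on $S_c$, with $\Lambda$ replaced by the $\Z$-module generated by the saddle-connection holonomies of $S_c$.

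The main obstacle is this second inclusion, and inside it the rigidity step: showing that the combinatorial and metric data carried by the three distinguished directions is preserved by every affine automorphism and pins the group down exactly — equivalently, that no ``exotic'' affine automorphism exists. For $c=1$ discreteness comes for free from the lattice $\Lambda$, so the effort is the finite group theory together with a sufficiently precise description of the cylinder (and strip) decompositions; for $c>1$ there is no invariant lattice, so even the discreteness of $\Gamma(S_c)$ must be wrung out of the combinatorial model, and that is where I expect the difficulty to concentrate.
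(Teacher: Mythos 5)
Your first half is correct and matches the paper's Lemma \ref{lem:affine_automorphisms} in substance, with a different choice of auxiliary generators: the paper realizes $D_c=B_cA_c$ and $E_c=-C_cB_c$ as simultaneous Dehn twists in the horizontal and slope-one cylinder decompositions (computing that the moduli are constantly $\frac12$ and $\frac1{2c+2}$), while you use the horizontal twist together with the ``generalized rotation'' $F_c=C_cA_c$, which the paper records only in a remark; either route works. Your $c=1$ upper bound is also essentially the paper's (Proposition \ref{prop:S_1_saddles} plus the final section), though two details need repair: the cosets of $\widehat\Gamma(2)$ in the stabilizer of $\Lambda$ include matrices with half-integer entries (e.g.\ the map $(1,1)\mapsto(2,0)$, $(1,-1)\mapsto(1,1)$), so your finite check is not purely inside $\GL(2,\Z)$; and your proposed invariant --- the multiset of moduli up to scale --- fails to separate the horizontal cusp from the slope-one cusp, since both decompositions have all moduli equal. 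The paper distinguishes them by the number of singularities on the boundary of the smallest cylinder ($2$ versus $4$), and uses the distinctness of the cylinder areas to force each horizontal cylinder to be fixed.

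The genuine gap is the upper bound for $c>1$, exactly where you predict the difficulty to concentrate, and neither of your proposed routes closes it. Route (b) fails outright: for generic $c$ the $\Z$-module generated by the saddle-connection holonomies of $S_c$ is dense in $\R^2$, so there is no invariant lattice and no free discreteness. Route (a) is circular as stated: Theorem \ref{thm:isotopic_affine_action} concerns only the automorphisms $\widehat G_c$ with $G\in\G^\pm$, and the paper proves it \emph{assuming} the Veech group computation; it gives no purchase on a hypothetical exotic automorphism of $S_c$ not already known to come from $\G^\pm$. The missing idea is the paper's combinatorial classification of saddle connections on $S_c$, obtained by transporting the $c=1$ classification across the canonical homeomorphism $h_{1,c}$: Lemma \ref{lem:same_saddles} shows, by induction on crossing complexity with a fixed triangulation using the convexity of the unfoldings (Proposition \ref{prop:dev}), that a homeomorphism carrying a triangulation by saddle connections to saddle connections and preserving the sign of wedge products of holonomies induces a bijection on homotopy classes of saddle connections; the boundary semiconjugacy $\varphi_c$ of Proposition \ref{prop:varphi_c} between the actions of $\G^\pm_c$ and $\G^\pm_1$ on directions is what verifies that sign hypothesis. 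The resulting Theorem \ref{thm:classification_of_saddle_connections} says the saddle-connection directions of $S_c$ are exactly the $\G^\pm_c$-orbit of the horizontal and slope-one directions; only with that in hand can one run, for every $c\ge1$ simultaneously, the argument you sketch for $c=1$ --- find $N_c\in\G^\pm_c$ with $N_c^{-1}M$ fixing the horizontal direction and conclude $N_c^{-1}M=D_c^{\,n}$ or $-I\,A_cD_c^{\,n}$ from the smallest horizontal cylinder. Without this classification, or a substitute for it, the $c>1$ case remains open in your proposal.
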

For $c=\cos(\frac{2 \pi}{n})$, it is a theorem of Veech that the elements given above generate $\Gamma(S_c)$ \cite{V}, which is an $(\frac{n}{2}, \infty, \infty)$-triangle group when 
$n$ is even, and an $(n,2, \infty)$ triangle group when $n$ is odd.

Note in particular, the surface $S_1$ has the lattice property:
\begin{corollary}[The lattice property]
\label{cor:lattice property}
The orientation preserving part of $\Gamma(S_1)$ is the congruence two subgroup of $\SL(2, \Z)$. 
\end{corollary}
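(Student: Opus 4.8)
The plan is to derive the corollary directly from Theorem \ref{thm:veech_groups} by an elementary computation with the generators, specialized to $c=1$.

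First I would write down the generators at $c=1$. Theorem \ref{thm:veech_groups} gives that $\Gamma(S_1)$ is generated by $-I$ together with
$$A_1=\begin{pmatrix}-1 & 0\\ 0 & 1\end{pmatrix},\qquad B_1=\begin{pmatrix}-1 & 2\\ 0 & 1\end{pmatrix},\qquad C_1=\begin{pmatrix}-1 & 0\\ -2 & 1\end{pmatrix}.$$
All four generators have integer entries and determinant $\pm 1$, so $\Gamma(S_1)\subseteq\GL(2,\Z)$; moreover $-I$ has determinant $+1$ while $A_1,B_1,C_1$ each have determinant $-1$, so the orientation-preserving subgroup $\Gamma(S_1)^+=\Gamma(S_1)\cap\SL(2,\Z)$ has index two in $\Gamma(S_1)$. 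Finally, every one of the four generators is congruent to $I$ modulo $2$. Since reduction mod $2$ is a homomorphism $\GL(2,\Z)\to\GL(2,\Z/2\Z)$, it follows that every element of $\Gamma(S_1)$ is congruent to $I$ mod $2$, and hence $\Gamma(S_1)^+\subseteq\Gamma(2):=\ker\bigl(\SL(2,\Z)\to\SL(2,\Z/2\Z)\bigr)$, the congruence two subgroup.

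Next I would prove the reverse inclusion $\Gamma(2)\subseteq\Gamma(S_1)^+$. A one-line matrix computation gives
$$B_1A_1=\begin{pmatrix}1 & 2\\ 0 & 1\end{pmatrix},\qquad C_1A_1=\begin{pmatrix}1 & 0\\ 2 & 1\end{pmatrix},$$
so both of these parabolics lie in $\Gamma(S_1)$, as does $-I$. I would then invoke the classical fact that $\Gamma(2)$ is generated by $-I$ together with $\bigl(\begin{smallmatrix}1&2\\0&1\end{smallmatrix}\bigr)$ and $\bigl(\begin{smallmatrix}1&0\\2&1\end{smallmatrix}\bigr)$ — equivalently, that the image of $\Gamma(2)$ in $\PSL(2,\Z)$ is free of rank two on the images of these two matrices. (This is standard and can be obtained from the usual fundamental domain for $\Gamma(2)$ acting on $\H$, or by a ping-pong argument.) Hence $\Gamma(2)\subseteq\Gamma(S_1)$, and since every element of $\Gamma(2)$ has determinant $+1$, in fact $\Gamma(2)\subseteq\Gamma(S_1)^+$.

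Combining the two inclusions yields $\Gamma(S_1)^+=\Gamma(2)$, which is exactly the assertion of the corollary. I do not expect a genuine obstacle here: the only input beyond Theorem \ref{thm:veech_groups} and bookkeeping is the standard presentation of $\Gamma(2)$, and the one point requiring care is keeping track of which generators are orientation reversing, so that the index-two orientation-preserving part is correctly identified with $\Gamma(2)$ rather than with the full group $\Gamma(S_1)$.
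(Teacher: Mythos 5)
Your proof is correct and follows the same route the paper intends: the corollary is stated as an immediate consequence of Theorem \ref{thm:veech_groups}, and your two inclusions (all generators are $\equiv I \bmod 2$ with determinant $\pm1$, and $B_1A_1$, $C_1A_1$, $-I$ generate the congruence two subgroup) are exactly the standard computation the paper leaves to the reader. The only external input, the classical generating set for the congruence two subgroup of $\SL(2,\Z)$, is correctly invoked.
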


We describe the relations in this matrix group below.
For all $c$, the matrices $A_c$, $B_c$, and $C_c$ are involutions and act as reflections in geodesics in the hyperbolic plane, $\H^2$, 
when projectivized to elements of $\Isom(\H^2)\cong \PGL(2, \R)$. By the theorem, the groups $\Gamma(S_c)$ are all representations of the group
$$\G^\pm=(\Z_2 \ast \Z_2 \ast \Z_2) \oplus \Z_2=\langle A,B,C,-I ~|~ A^2=B^2=C^2=I \rangle.$$
The geodesics associated to $A_c$ and $C_c$ intersect at angle $\frac{2\pi}{n}$ when $c=\cos(\frac{2\pi}{n})$,
are asymptotic when $c=1$, and disjoint and non-asymptotic for $c>1$. See figure \ref{fig:veechgroup}.

\begin{figure}[ht]
\begin{center}
\includegraphics[width=4in]{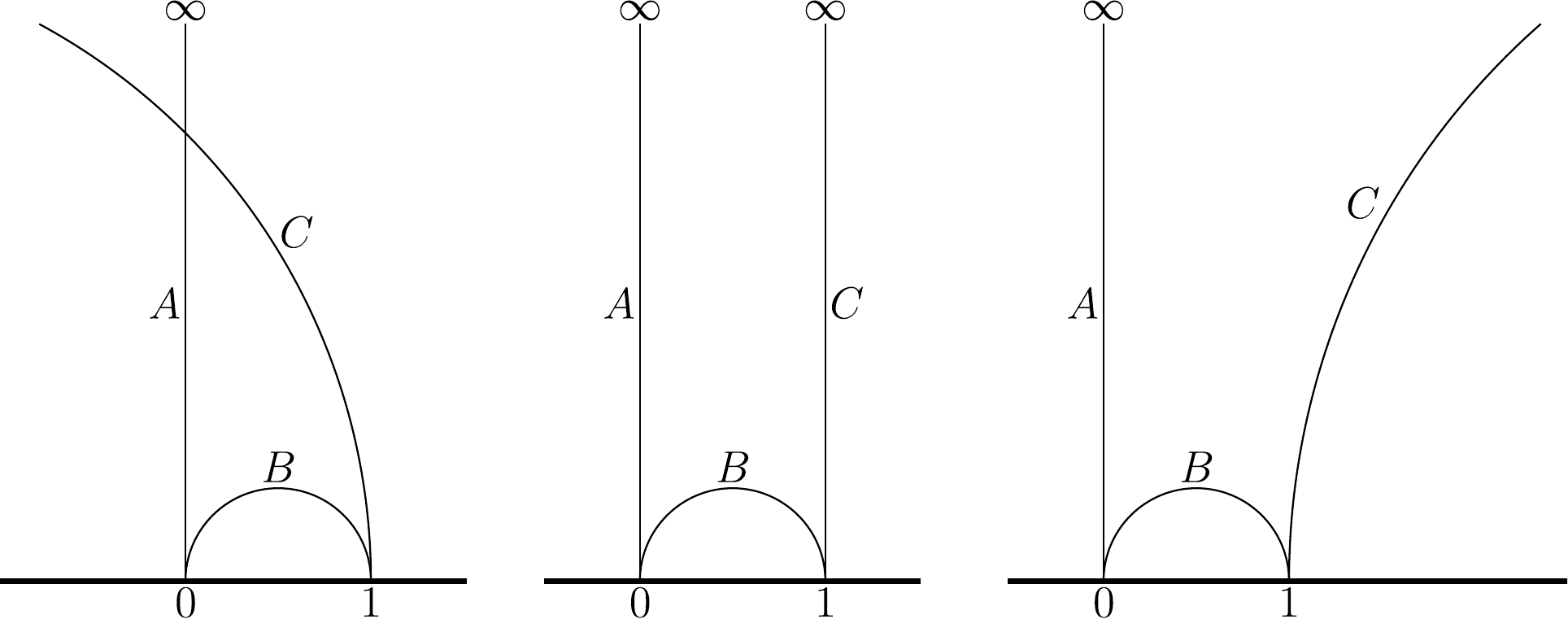}
\caption{This figure shows the geodesics in the upper half plane model of $\H^2$ 
that $A_c$, $B_c$, and $C_c$ reflect in for $c=\cos \frac{\pi}{4}$, $c=1$, and $c=\frac{5}{4}$ from left to right.}
\label{fig:veechgroup}
\end{center}
\end{figure}

When $c \geq 1$, the region in the hyperbolic plane bounded by the reflecting geodesics of $A_c$, $B_c$, and $C_c$ is a fundamental domain for the action of $\Gamma(S_c)$. Moreover, the representations $\G^\pm \to \Gamma(S_c) \subset \GL(2, \R)$ are faithful when $c \geq 1$. Both these facts follow from a variant of the standard Ping-pong Lemma in hyperbolic geometry (described in \cite{MT98}). 

We briefly describe the well known situation when $c < 1$ for completeness. When $n$ is even and $c=\cos(\frac{2 \pi}{n})$, the triangle formed by the reflecting geodesics is again a fundamental domain. Thus, $\Gamma(S_c)$ is isomorphic to $\G^\pm$ modulo the relation $(A_c C_c)^\frac{n}{2}=-I$. When $n$ is odd, the element $(A_c C_c)^{\lfloor \frac{n}{2} \rfloor} A_c$ reflects in a geodesic orthogonal to the reflecting geodesic of $B_c$. In this case $\Gamma(S_c)$ is isomorphic to $\G^\pm$ modulo the relations $(A_c C_c)^n=I$ and $[(A_c C_c)^{\lfloor \frac{n}{2} \rfloor} A_c, B_c]=-I$. 

\begin{proposition}
\label{prop:bijection}
For all $c \geq 1$, the map $\D:\Aff(S_c) \to \Gamma(S_c)$ is a bijection.
\end{proposition}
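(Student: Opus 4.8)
The plan is to show that $\D: \Aff(S_c) \to \Gamma(S_c)$ is both injective and surjective. Surjectivity is definitional in a sense: by the definition of $\Gamma(S_c) = \D\big(\Aff(S_c)\big)$, the map is onto its image, so the only content is injectivity, i.e. that the kernel of $\D$ is trivial. In other words, I must show that if $\widehat{A}: S_c \to S_c$ is an affine automorphism with $\D(\widehat{A}) = I$, then $\widehat{A}$ is the identity map. An affine automorphism with trivial derivative is a translation automorphism: in every local chart it has the form $(x,y) \mapsto (x + t_1, y + t_2)$. So the real statement to prove is that $S_c$ has no nontrivial translation automorphisms.

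The key step is to exploit the two cone singularities of infinite cone angle. First I would note that any affine automorphism permutes the set $\Sigma$ of singularities, and since an affine automorphism with $\D = I$ preserves orientation and the local translation structure, a translation automorphism must in fact fix each singularity individually — the two infinite-angle singularities can be distinguished from each other by the local geometry near them (for instance, the combinatorics of how the polygon edges emanate), or if not, one argues that swapping them is incompatible with $\D = I$ by a connectedness/continuity argument. Second, and this is where the heart of the argument lies, a translation automorphism $\widehat{A}$ that fixes a singularity $p$ must act on the "circle of directions at infinite angle" at $p$; since $\D(\widehat A) = I$, it fixes every direction, hence fixes the germ of every geodesic ray leaving $p$. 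From this I would bootstrap: $\widehat A$ fixes $p$ and fixes the direction and speed of every geodesic emanating from $p$, so it fixes every point reachable from $p$ by a geodesic, and by connectedness of $S_c$ (minus a measure-zero set one can flow through, or via density of such points) $\widehat A$ is the identity. An alternative, more hands-on route: a translation automorphism is determined by the image of a single polygon $Q_c^+$; since the vertices of $Q_c^+$ form a specific discrete set lying on a parabola (for $c=1$) or more generally an orbit of $T_c$, and these vertices map to vertices under any affine automorphism, one checks directly that no nontrivial translation carries this vertex set to itself while respecting the gluing — the unbounded, non-periodic nature of the vertex configuration leaves no room for a nonzero translation.

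I expect the main obstacle to be making the "$\widehat A$ fixes $p$ and all directions at $p$, therefore $\widehat A = \mathrm{id}$" step fully rigorous in the infinite-type setting: one must be careful because geodesics from $p$ may hit singularities or (in principle, though not for these complete surfaces) escape in finite time, so the set of points "visible" from $p$ along singularity-free geodesics, while dense, is not all of $S_c \smallsetminus \Sigma$ outright. The clean way around this is to use continuity of $\widehat A$ together with density of this visible set, which is where completeness of $S_c$ (noted in Section \ref{sect:background}) and the structure from Proposition \ref{prop:h} help. I would organize the final writeup as: (1) reduce to translation automorphisms; (2) show a translation automorphism fixes each singularity; (3) show it fixes a dense set of points; (4) conclude by continuity; and separately (0) observe surjectivity is immediate from the definition of $\Gamma(S_c)$.
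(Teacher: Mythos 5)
You correctly reduce the proposition to showing that the kernel of $\D$ is trivial, i.e.\ that $S_c$ admits no nontrivial translation automorphism, and your closing step (a translation automorphism fixing a non-singular point is the identity, by the open--closed argument on the connected surface) is sound. The gap is in how you produce a fixed non-singular point. Your main route runs through a singularity $p$: you assert that since $\D(\widehat A)=I$ the map ``fixes every direction, hence fixes the germ of every geodesic ray leaving $p$.'' At an \emph{infinite} cone angle singularity this inference fails: the ray germs at $p$ are parametrized by the link of $p$, a copy of $\R$ covering the circle of directions, and $\D(\widehat A)=I$ only forces the induced map on the link to be a deck transformation of that cover, i.e.\ a ``rotation by $2\pi k$'' for some integer $k$. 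Ruling out $k\neq 0$ needs a further global input (for instance, that the automorphism preserves each horizontal cylinder because their areas are pairwise distinct), which you do not supply. Your preliminary claim that each singularity is fixed individually, and your alternative route via the vertex set of $Q_c^+$, are likewise only sketched: the decomposition into $Q_c^+$ and $Q_c^-$ is not intrinsic to the translation structure, and in $S_c$ all polygon vertices are identified to just two points, so ``vertices map to vertices'' carries little information by itself.

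The paper's proof sidesteps the singularities entirely: an automorphism with derivative $I$ sends saddle connections to saddle connections preserving slope and length, and $S_c$ contains exactly one saddle connection of slope one and length $\sqrt 2$; that saddle connection is therefore fixed pointwise, which supplies the fixed non-singular point, and the conclusion follows as in your step (4). If you want to keep your outline, replacing steps (2)--(3) by the observation that some metrically unique saddle connection (or cylinder) must be preserved is the shortest repair.
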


Because of this proposition, an affine automorphism is uniquely determined by its derivative. 
This allows us to introduce the following notation:

\begin{notation}
Recall $\Gamma(S_c) \cong \G^\pm$ when $c \geq 1$. Given $G \in \G^\pm$ and $c \geq 1$, we denote the corresponding element of $\Gamma(S_c) \subset \GL(2, \R)$ by $G_c$. Whenever the derivative map $\D:\Aff(S) \to \Gamma(S)$ is a bijection, given $A \in \Gamma(S)$,
we use $\widehat A \in \Aff(S)$ to denote the corresponding affine automorphism $\widehat A:S \to S$.
\end{notation}

We explicitly describe the topological action of generators for the affine automorphism group $\Aff(S_c)$ in Lemma \ref{lem:affine_automorphisms}. 
The following theorem uses the family of homeomorphisms $h_{c,c'}:S_c \to S_{c'}$ in 
Proposition \ref{prop:h} to say that the affine automorphism groups act on each $S_c$ in the same way.
Note that because the singular points of $S_c$ are infinite cone singularities, any homeomorphism $S_c \to S_{c'}$ must map singularities to singularities.
In particular, when two maps $S_c \to S_{c'}$ are isotopic, they are isotopic by an isotopy which preserves singularities.

\begin{theorem}[Isotopic Affine Actions]
\label{thm:isotopic_affine_action}
The homeomorphisms $S_c \to S_c'$ given by $h_{c,c'} \circ \widehat G_c$ and $\widehat G_{c'} \circ h_{c,c'}$ are isotopic
for all $G \in \G^\pm=\Gamma(S_c)$, $c \geq 1$ and $c' \geq 1$.
\end{theorem}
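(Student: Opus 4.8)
The plan is to reduce the statement to a statement about a single automorphism of one surface, namely that $h_{c,c'} \circ \widehat G_c \circ h_{c,c'}^{-1}$ is isotopic to $\widehat G_{c'}$ in $S_{c'}$, and then verify this on a set of generators of $\G^\pm$. First I would observe that $\Phi = h_{c,c'} \circ \widehat G_c \circ h_{c,c'}^{-1}$ is a homeomorphism $S_{c'} \to S_{c'}$; it need not be affine, but it is isotopic to an affine automorphism if and only if it agrees with one up to isotopy. Since $\D : \Aff(S_{c'}) \to \Gamma(S_{c'})$ is a bijection (Proposition \ref{prop:bijection}), the only candidate affine automorphism for $\Phi$ to be isotopic to is $\widehat G_{c'}$ itself, so the whole content is the claim that $\Phi$ and $\widehat G_{c'}$ are isotopic. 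Because both maps act on the pair of polygons $(Q_{c'}^+, Q_{c'}^-)$ --- indeed $h_{c,c'}$ carries $Q_c^\pm$ to $Q_{c'}^\pm$ and each generator of $\Aff(S_c)$ either preserves or swaps the two polygons --- the isotopy can be built polygon-by-polygon (in fact triangle-by-triangle, using the common combinatorial triangulation from the proof of Proposition \ref{prop:h}), provided the two maps induce the same permutation of the (countably many) vertices and the same combinatorial action on edges.

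The key steps, in order, are: (1) Fix generators of $\G^\pm$, say $A$, $B$, $C$, and $-I$, and describe explicitly the action of $\widehat A_c$, $\widehat B_c$, $\widehat C_c$, $\widehat{-I}_c$ on the cell structure of $S_c$ (this is what Lemma \ref{lem:affine_automorphisms} provides); note that each of these is ``combinatorially constant'' in $c$, i.e.\ it induces a permutation of vertices/edges that does not depend on $c$. (2) Verify that $h_{c,c'}$ also induces the identity permutation on the combinatorial cell structure (it is the ``affine on each triangle'' interpolation, so it fixes every vertex's combinatorial label and maps each edge to the correspondingly-labelled edge). (3) Conclude that $h_{c,c'} \circ \widehat G_c$ and $\widehat G_{c'} \circ h_{c,c'}$ induce the same permutation on vertices and the same combinatorial map on edges of $S_{c'}$, for $G$ a generator, hence for all $G \in \G^\pm$ by composing (the combinatorial action is a homomorphism). (4) Upgrade ``same combinatorial action'' to ``isotopic'': two homeomorphisms of $S_{c'}$ that agree on the $1$-skeleton up to isotopy, and map each $2$-cell to the same $2$-cell preserving orientation, are isotopic; here one uses the Alexander trick on each polygon (each $Q_{c'}^\pm$ is a convex region, so a homeomorphism of it fixing the boundary is isotopic to the identity rel boundary) and then patches the isotopies, checking they are compatible along the glued edges. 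One should also invoke the fourth bullet of Proposition \ref{prop:h} to see that the isotopy can be taken to vary continuously, though for the bare isotopy statement this is not needed.

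The main obstacle, and where care is genuinely required, is step (4): $S_{c'}$ is an infinite-genus surface with two infinite-cone-angle singularities, so one must be careful that the patched isotopy is continuous at (and near) the singular points and is a proper isotopy --- a naive cell-by-cell Alexander trick could fail to extend continuously across the vertex links, which are infinite. The cleanest way around this is to first arrange, by an isotopy supported in a neighbourhood of the $1$-skeleton, that $\Phi$ and $\widehat G_{c'}$ agree on an open neighbourhood of the $1$-skeleton (including near the singularities, where one uses that both maps are conjugates of the linear map near the cone point and hence agree there after an isotopy of the link, an infinite but locally finite construction), and only then apply the Alexander trick on the complementary union of open $2$-cells, which is a disjoint union of open discs on which the two maps now agree on a collar --- there the isotopy is standard and automatically patches. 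A secondary, more bookkeeping-type obstacle is correctly recording the combinatorial action of the generators and of $h_{c,c'}$ in a common labelling of the infinitely many cells; this is routine once the labelling in Proposition \ref{prop:h} and Lemma \ref{lem:affine_automorphisms} is fixed, but it must be done consistently.
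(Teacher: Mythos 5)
Your proposal goes a genuinely different and much harder route than the paper. The paper's proof is essentially two lines: it reduces to the generators $-I, A, D, E$, observes that the affine actions $\widehat G_{c''}$ vary continuously in the bundle $B$ of Proposition \ref{prop:h}, and then takes the interpolating family $\phi_{c''}=h_{c'',c'}\circ \widehat G_{c''}\circ h_{c,c''}$ as $c''$ runs from $c$ to $c'$; at the endpoints this family equals $h_{c,c'}\circ\widehat G_c$ and $\widehat G_{c'}\circ h_{c,c'}$ respectively, so the family itself is the desired isotopy. All of the cell-by-cell bookkeeping and the Alexander trick are avoided entirely. Your reduction to generators and the composition argument in step (3) are fine and match the paper's first sentence, but everything after that is replaced in the paper by this one continuity observation.

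Beyond being longer, your argument has a concrete gap in steps (1) and (3): the parabolic generators do \emph{not} induce a permutation of the edges of any fixed triangulation. The automorphism $\widehat D_c$, for instance, is a simultaneous Dehn twist in the horizontal cylinders; it maps saddle connections to saddle connections, but not edges of the chosen triangulation to edges of that triangulation, so there is no ``permutation of vertices/edges'' to compare, only isotopy classes of image arcs. Making ``combinatorially constant in $c$'' precise therefore requires showing that the isotopy class of $\widehat G_c(\sigma)$ corresponds under $h_{c,c'}$ to that of $\widehat G_{c'}(h_{c,c'}(\sigma))$ for every edge $\sigma$ --- which is close to what you are trying to prove, and in the paper is the substance of the later Lemma \ref{lem:same_saddles} and Theorem \ref{thm:isotopic_triangulations}. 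Your step (4) difficulty at the infinite-cone-angle points is real and is acknowledged but not actually resolved; the sketch of agreeing near the $1$-skeleton first is plausible but would need to be carried out with care about properness and local finiteness. I would recommend adopting the paper's interpolation argument, which makes both issues moot.
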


The remaining results explain that the surfaces have the same geodesics in a combinatorial sense.
A {\em saddle connection} in a translation surface $S$ is a geodesic segment joining singularities
with no singularities in its interior. We say two saddle connections $\sigma$ and $\tau$ are {\em disjoint} if $\sigma \cap \tau$ is contained in the set of endpoints.

\begin{theorem}[Isotopic triangulations]
\label{thm:isotopic_triangulations}
Suppose $\{\sigma_i\}_{i \in \Lambda}$ is a disjoint collection of saddle connections in $S_c$ for $c \geq 1$ which triangulate the surface.
Then for each $c' \geq 1$, there is a disjoint collection of saddle connections $\{\tau_i\}_{i \in \Lambda}$ and a homeomorphism
$S_c \to S_c'$ isotopic to $h_{c,c'}$ so that $\sigma_i \mapsto \tau_i$ for all $i \in \Lambda$.
\end{theorem}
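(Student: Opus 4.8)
The plan is to build the homeomorphism $S_c \to S_{c'}$ and the collection $\{\tau_i\}$ by a straightening argument that deforms the given triangulation along the family $\{h_{c,c''}\}$ of Proposition~\ref{prop:h}. First I would transport the combinatorial data: applying $h_{c,c'}$ carries $\{\sigma_i\}$ to a disjoint collection of arcs $\{h_{c,c'}(\sigma_i)\}$ in $S_{c'}$, joining singularities, with no singularities in their interiors, and cutting $S_{c'}$ into (topological) triangles. These arcs are generally not geodesic, so the goal is to find geodesic representatives $\tau_i$ in the same isotopy classes rel singularities, with the $\tau_i$ still pairwise disjoint, and then to note that the identity-on-combinatorics homeomorphism $\sigma_i \mapsto \tau_i$ is isotopic to $h_{c,c'}$ because it agrees with $h_{c,c'}$ on the $1$-skeleton up to isotopy and the complementary regions are disks (triangles), on which any two homeomorphisms agreeing on the boundary are isotopic rel boundary.

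The core of the argument is therefore: in each isotopy class rel endpoints of an arc from a singularity to a singularity in $S_{c'}$ that appears in this triangulation, there is a geodesic saddle connection, and these geodesic representatives for the different $\sigma_i$ are pairwise disjoint. I would handle existence and disjointness simultaneously by a continuity/openness-and-closedness argument in the parameter $c''\in[c,c']$ (assume $c<c'$; the other case is symmetric, and the case $c=c'$ is trivial). Consider the set of $c''$ for which $\{\sigma_i\}$ has a geodesic realization in $S_{c''}$, isotopic via $h_{c,c''}$, consisting of pairwise disjoint saddle connections triangulating $S_{c''}$; this set contains $c$. Using the continuity of the bundle structure in Proposition~\ref{prop:h} (the metrized bundle $B$ is locally isometric to $\R^3$), a triangulation by disjoint saddle connections at parameter $c''$ persists as a triangulation by disjoint saddle connections for nearby parameters: each triangle is a Euclidean triangle whose vertices and edge directions vary continuously, so for $c'''$ near $c''$ one gets Euclidean triangles with the same gluing pattern, hence a translation surface isometric to $S_{c'''}$ with the prescribed combinatorics, giving openness. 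For closedness one must rule out degeneration: an edge length going to $0$, two edges of a triangle becoming parallel (the triangle collapsing), or a saddle connection escaping to infinity. Ruling out collapse is where the specific geometry of the $S_c$ enters — the affinely regular polygons $Q_c^\pm$ vary in a controlled way, their edge vectors stay uniformly transverse on compact parameter intervals, and (crucially) no new cone points or identifications appear — so on the compact interval $[c,c']$ the combinatorial type cannot change. Assembling openness, closedness, and nonemptiness gives the realization at $c'=c''$, and the disjointness is carried along throughout.

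The main obstacle is precisely the closedness step: proving that as $c''$ ranges over $[c,c']$ no triangle of the evolving triangulation degenerates. This is where I expect the real work, because the $S_c$ are infinite-area surfaces with infinite cone angles, so there is no total-area or Gauss–Bonnet bound to prevent thin triangles, and a priori a saddle connection could spiral off toward a puncture or shrink. I would attack this by exploiting the explicit description of $Q_c^\pm$ via the generalized rotation $T_c$ in~\eqref{eq:generalized_rotation}: the vertices $P_c^k = T_c^k(0,0)$ depend real-analytically on $c$, and one can get uniform lower bounds on the areas of the triangles used in the combinatorial triangulation of $Q_c^\pm$ over any compact $c$-interval, together with a uniform control (again over compact intervals) on how a geodesic in a fixed isotopy class can wander — using that the developing map of a saddle connection is a straight segment whose endpoints are determined by the isotopy class and that only finitely many combinatorial types of such segments can occur below a given length bound. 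Once degeneration is excluded, the rest is soft: geodesic representatives in translation surfaces, when they exist in an isotopy class rel singularities and the class has a disjoint representative, are unique and disjoint, and filling in the triangles gives the homeomorphism isotopic to $h_{c,c'}$.
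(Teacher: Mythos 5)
Your high-level outline (push $\{\sigma_i\}$ forward by $h_{c,c'}$, straighten each arc to a geodesic saddle connection in its homotopy class rel singularities, observe disjointness, and fill in the triangles affinely) is the same skeleton the paper uses. But the heart of the theorem is the existence of a geodesic representative in each class $[h_{c,c'}(\sigma_i)]$, and this is exactly the step your proposal does not actually prove. Your open/closed continuation in $c''$ has two problems. First, openness is not automatic: the given triangulation is \emph{arbitrary} and has infinitely many triangles, so there is no uniform lower bound on how degenerate its triangles are; the interval of parameters on which an individual triangle survives can shrink to zero as you range over the infinitely many triangles, and your proposed uniform area bounds refer to the fixed combinatorial triangulation of $Q_c^\pm$ from Proposition \ref{prop:h}, not to the arbitrary $\{\sigma_i\}$. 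Second, and more seriously, the closedness step --- ruling out that some triangle degenerates (three singular points becoming collinear, forcing a flip) at some $c''\in[c,c']$ --- is precisely the content of the theorem, and the strategy you sketch (real-analytic dependence of the $P_c^k$, finiteness of combinatorial types below a length bound) does not engage with the mechanism that actually prevents degeneration.

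That mechanism is the Veech-group classification of saddle connections: every saddle connection of $S_c$ points in a direction in the $\G^\pm_c$-orbit of the horizontal or slope-one direction (Proposition \ref{prop:S_1_saddles}, Theorem \ref{thm:classification_of_saddle_connections}), and the semiconjugacy $\varphi_c:\Circ\to\Circ$ of Proposition \ref{prop:varphi_c} preserves the cyclic order of these directions, hence the signs of wedge products of holonomies. The paper packages this as statement (3) of Lemma \ref{lem:same_saddles} and then proves existence of geodesic representatives by induction on a complexity (number of crossings with a reference triangulation), using the convexity statement of Proposition \ref{prop:dev} to produce, for each saddle connection of positive complexity, a convex quadrilateral whose sides have smaller complexity and whose convexity is preserved under $h_{c,c'}$ by the sign condition --- so its diagonal survives. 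Some such global input is unavoidable: a deformation of a translation surface can certainly destroy a triangulation by saddle connections in general, so you must use what is special about the family $S_c$. Without replacing your ``I expect the real work here'' by an argument of this kind, the proof is incomplete. (Your final isotopy step is essentially fine, though for an infinite complex the cleanest route is the one in the paper: run the same construction for every intermediate parameter $c''$ and use $h_{c,c''}\circ h'_{c'',c'}$ as the isotopy.)
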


We can use this theorem to show that all geodesics are the same combinatorially. 
Fix a translation surface $S$ and a collection of disjoint collection of saddle connections in $S$, $\sT_S=\{\sigma_i\}_{i \in \Lambda}$, which triangulate $S$. 
Say that an {\em interior geodesic} in a translation surface $S$ 
is a map $\gamma_0:I \to S$, where $I \subset \R$ is an open interval containing $0$,
which satisfies the following statements:
\begin{itemize}
\item In local coordinates, $\gamma_0(a+t)=\gamma_0(a)+t \u$
for some unit vector $\u \in \R^2$.
\item The image $\gamma_0(I)$ does not contain any singularities.
\item The path $\gamma_0$ passes through a countably infinite number of saddle connections in $\sT_S$ in both forward and backward time.
\end{itemize}
We say $\gamma_0$ {\em travels in the direction of $\u$.}
Suppose $\gamma_0$ is an interior geodesic in $S$ for which $\gamma_0(0) \in \bigcup_{i \in \Lambda} \sigma_i$. Then, the set 
$X=\gamma_0^{-1}\big(\bigcup_{i \in \Lambda} \sigma_i\big) \subset I$ is discrete, bi-infinite and contains zero. 
See the discussion of straight line trajectories in Section \ref{sect:background}.
Thus, there is a unique increasing bijection $\psi:\Z \to X$ so that $\psi(0)=0$. The {\em code of $\gamma_0$} is the bi-infinite sequence
$\langle e_n \in \Lambda \rangle_{n \in \Z}$ so that $\gamma_0 \circ \psi(n) \in \sigma_{e_n}$. Since $\gamma_0$ is interior, this sequence is unique.

Now consider the surface $S_c$. Let $\{\sigma_i\}_{i \in \Lambda}$ be a collection of saddle connections which triangulate $S_c$. Let $\Omega_c \subset \Lambda^\Z$ denote the closure (in the product topology) of the collection of all codes of interior geodesics on $S_c$. This collection $\Omega_c$ is a shift space on a countable alphabet, with the property that each symbol can only be proceeded
or succeeded by four symbols.

Theorem \ref{thm:isotopic_triangulations} indicates that for each $c' \geq 1$ there is a corresponding triangulation $\{\tau_i\}_{i \in \Lambda}$ of $S_{c'}$. We can use this triangulation to code interior geodesics
on $S_{c'}$. We again let $\Omega_{c'} \subset \Lambda^\Z$ denote the 
the closure of the collection of all codes of interior geodesics on $S_{c'}$.
In fact these sets are the same:

\begin{theorem}[Same codes for geodesics]
\label{thm:same_geodesics}
For each $c \geq 1$ and $c' \geq 1$, the two coding spaces $\Omega_c$ and $\Omega_{c'}$ are equal as subsets of $\Lambda^\Z$.
\end{theorem}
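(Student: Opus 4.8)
The plan is to reduce Theorem~\ref{thm:same_geodesics} to Theorem~\ref{thm:isotopic_triangulations} together with a combinatorial description of which sequences in $\Lambda^\Z$ actually occur as codes. First I would fix a triangulation $\{\sigma_i\}_{i \in \Lambda}$ of $S_c$ and let $\{\tau_i\}_{i \in \Lambda}$ be the corresponding triangulation of $S_{c'}$ furnished by Theorem~\ref{thm:isotopic_triangulations}, carried over by a homeomorphism $f : S_c \to S_{c'}$ isotopic to $h_{c,c'}$ with $f(\sigma_i) = \tau_i$ for all $i$. The key observation is that the code of an interior geodesic depends only on the combinatorics of how the geodesic crosses the dual graph of the triangulation, not on the flat metric: a code $\langle e_n \rangle_{n \in \Z}$ is realized by some interior geodesic iff it satisfies a purely local compatibility condition, namely that for each $n$ the triangles incident to $\sigma_{e_n}$ and $\sigma_{e_{n+1}}$ can be arranged so that a straight segment passes from one edge to the next. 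So I would isolate the set $\Sigma_{\mathcal T} \subset \Lambda^\Z$ of \emph{admissible transition sequences} determined by the combinatorial triangulated surface (which triangles glue to which, along which edges), prove that $\Omega_c \subseteq \Sigma_{\mathcal T}$ (every code is admissible), and then handle the reverse inclusion.

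The reverse inclusion, $\Sigma_{\mathcal T} \subseteq \Omega_c$, is where the geometry must re-enter: given an admissible transition sequence I want to produce an actual interior geodesic on $S_c$ with that code, or at least a sequence of interior geodesics whose codes converge to it in $\Lambda^\Z$ (since $\Omega_c$ is defined as a closure, it suffices to approximate). For each $N$ I would build a geodesic segment realizing the central block $e_{-N}, \ldots, e_N$ by an intersection-of-half-planes / continuity argument in the space of directions at a fixed transversal point: the set of directions whose forward-and-backward itinerary agrees with a given finite admissible block is a nonempty open (or at least nonempty, and shrinking) subset of the circle of directions, so a compactness/nesting argument yields a direction realizing the full biinfinite sequence, or a sequence of directions realizing longer and longer blocks. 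Combined with the fact that such a geodesic is automatically interior (it crosses infinitely many saddle connections in each time direction precisely because the admissibility condition never lets it terminate at a singularity or stop crossing edges), this gives $\Sigma_{\mathcal T} = \Omega_c$, and by the same reasoning $\Sigma_{\mathcal T} = \Omega_{c'}$ using $\{\tau_i\}$. The point is that $\Sigma_{\mathcal T}$ depends only on the combinatorial triangulated surface, and $f$ gives a combinatorial isomorphism between the triangulated surface underlying $S_c$ and that underlying $S_{c'}$ that is the identity on the index set $\Lambda$ — so $\Sigma_{\mathcal T}$ is literally the same subset of $\Lambda^\Z$ in both cases, and therefore $\Omega_c = \Omega_{c'}$.

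The main obstacle I anticipate is the reverse inclusion $\Sigma_{\mathcal T} \subseteq \Omega_c$, specifically making sure that admissibility as a purely combinatorial condition is strong enough to guarantee that a realizing direction exists — a priori the flat structure could make the relevant open sets of directions shrink to the empty set, so that some admissible combinatorial sequence is not geometrically realized on this particular surface. Resolving this requires either a direct construction exploiting the specific geometry of the $S_c$ (e.g.\ that the triangulation can be chosen with good shape bounds so crossing directions behave tamely), or a more clever formulation of admissibility that builds in exactly the constraints the metric imposes — and one must check this formulation is still metric-independent in the sense needed, i.e.\ transported correctly by $f$. A secondary, more routine point is verifying that the closure operation and the approximation-by-finite-blocks argument interact correctly, i.e.\ that a limit of codes of interior geodesics, while perhaps not itself the code of an interior geodesic, still lies in the common combinatorial set $\Sigma_{\mathcal T}$; this should follow because $\Sigma_{\mathcal T}$ is closed (it is a subshift of finite type on a countable alphabet) and contains all the $\Omega_c$-dense finite-block-realizable sequences.
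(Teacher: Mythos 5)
There is a genuine gap, and it is exactly the one you flag as your ``main obstacle'': the set $\Sigma_{\mathcal T}$ of locally admissible transition sequences is \emph{not} equal to $\Omega_c$, and more fundamentally the coding space is not determined by the combinatorics of the triangulated surface alone. Local admissibility (consecutive edges bound a common triangle and a segment can cross from one to the other) is a nearest-neighbour condition, hence a countable-alphabet SFT; but whether a finite word $w_1\cdots w_K$ is actually realized by a straight segment is governed by the nonemptiness of a convex region in the unfolding of the corresponding triangle chain (the intersection $\bigcap_j I_j$ of the $y$-coordinate intervals of the developed crossing edges, in the notation of Proposition \ref{prop:dev}), and this depends on the actual coordinates of the developed vertices, i.e.\ on the flat metric. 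Long-range constraints accumulate: a locally admissible word can fail to be realized because the relevant intersection of half-planes degenerates, and deforming the triangle shapes can change which words are realized. So $\Omega_c$ is in general a proper, non-finite-type subshift of $\Sigma_{\mathcal T}$, and the identity $\Sigma_{\mathcal T}=\Omega_c=\Omega_{c'}$ you want is false as stated; your proposal defers the entire content of the theorem to this unproved (and unprovable in that form) combinatorial characterization.

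For comparison, the paper deduces Theorem \ref{thm:same_geodesics} in one line from the directional version, Theorem \ref{thm:same_geodesics2}, via $\Omega_c=\overline{\bigcup_{\u}\Omega_{c,\u}}$ and the surjectivity of $\varphi_c$. The real work is Lemma \ref{lem:same_geodesics2}: an induction on the length of a realized finite word, tracking the ``leading top and bottom vertices'' of the unfolding on $S_c$ and showing the corresponding unfolding on $S_{c'}$ has a nondegenerate interval $\bigcap_j I_j'$. The mechanism that transports geometric realizability from one surface to the other is precisely the hypotheses of Lemma \ref{lem:same_saddles}: every homotopy class of saddle connection (including auxiliary chords of the unfolding that are \emph{not} edges of the triangulation) still contains a saddle connection on the other surface, and signs of wedge products of holonomies are preserved, so upward/downward orientations of these chords relative to the flow direction are preserved. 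That is the metric input your $\Sigma_{\mathcal T}$ cannot see. If you want to salvage your outline, you would have to redefine admissibility to mean ``the associated convex region of the unfolding is nonempty'' and then prove this condition is preserved by $h_{c,c'}$ --- at which point you have reconstructed the paper's argument rather than bypassed it.
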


We will also consider codes of interior geodesics traveling in the direction of a fixed unit vector $\u$. For $c \geq 1$ and any $\u$, let $\Omega_{c,\u}$ denote the closure of the collection of codes of interior geodesics traveling in direction $\u$ on $S_c$. 

\begin{theorem}[Same directional codes]
\label{thm:same_geodesics2}
Let $\Circ$ denote the collection of unit vectors in $\R^2$. For each
$c>1$, there is a continuous monotonic degree one map $\varphi_c:\Circ \to \Circ$
such that for each unit vector $\u \in \Circ$, the choice of the unit vector $\u'=\varphi_c(\u)$ gives 
a coding space $\Omega_{1,\u'}$ which is equal to the coding space $\Omega_{c,\u}$ as a subset of $\Lambda^\Z$.
\end{theorem}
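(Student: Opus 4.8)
The plan is to bootstrap from Theorem~\ref{thm:same_geodesics} (equality of the full coding spaces $\Omega_c=\Omega_1$) by tracking how the \emph{direction} of an interior geodesic transforms under the canonical homeomorphism. The key observation is that although $h_{1,c}$ is not affine, Theorem~\ref{thm:isotopic_triangulations} lets us replace it by a homeomorphism $S_1\to S_c$ that maps the fixed triangulation $\{\sigma_i\}$ onto the triangulation $\{\tau_i\}$ simplex-by-simplex, and on each triangle we may take this map to be affine. An interior geodesic $\gamma_0$ in $S_1$ in direction $\u$ gets carried to a piecewise-linear path in $S_c$ with the same code, but now with \emph{different} slopes on different triangles. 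To prove the theorem I must show that, after isotopy, this path can be straightened to an honest geodesic in $S_c$, and that the direction of that geodesic depends only on $\u$ (not on the geodesic $\gamma_0$ chosen). This is exactly the content of a directional version of Theorem~\ref{thm:isotopic_triangulations}; so the real work is to set up $\varphi_c$ and verify it is well-defined, continuous, monotonic, and of degree one.

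First I would make precise the decomposition of $S_c$ in rational directions: for $\u$ of rational slope, $S_c$ decomposes either into cylinders (as in the horizontal case) or into infinite strips (as in the vertical case), and these decompositions are combinatorially the same for all $c\ge 1$ by Theorem~\ref{thm:veech_groups} together with the explicit action of the Veech group in Lemma~\ref{lem:affine_automorphisms}. This gives a countable dense set of directions $\u$ for which there is a preferred matching direction $\u'$ on $S_1$ (namely, the one whose decomposition has the same combinatorics), and along the way it pins down $\varphi_c$ on a dense set. Second, I would extend $\varphi_c$ to all of $\Circ$ by monotonicity: the cyclic order on directions in $\Circ$ is respected because saddle connections in $S_c$ and $S_1$ of the same homotopy type occur in the same cyclic order as their slopes vary — here I would invoke Theorem~\ref{thm:isotopic_triangulations} and the fact that a saddle connection $\sigma$ blocks geodesics in an interval of directions, and these "forbidden intervals'' are linearly ordered. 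The intersection/nesting structure of these intervals transfers from $S_c$ to $S_1$, forcing a unique monotone degree-one $\varphi_c$ filling in the gaps; continuity follows because the rational directions are dense and $\varphi_c$ is monotone, so it can have at most countably many jumps, and I would rule these out by showing the two decompositions degenerate continuously as $\u$ varies (using the metric continuity in the last bullet of Proposition~\ref{prop:h}).

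Third, with $\varphi_c$ in hand I would prove $\Omega_{c,\u}=\Omega_{1,\varphi_c(\u)}$ by a squeezing argument: given an interior geodesic $\gamma_0$ in $S_c$ in direction $\u$ with code $\langle e_n\rangle$, its code lies in $\Omega_c=\Omega_1$, so there is \emph{some} interior geodesic in $S_1$ with that code; I must show it can be taken in direction $\varphi_c(\u)$. The combinatorial constraint "each symbol is preceded/succeeded by only four symbols'' means a code determines a nested sequence of quadrilaterals (unions of two adjacent triangles) in direction space; the direction of any geodesic realizing the code lies in the intersection of the corresponding forbidden-complement intervals, on both $S_c$ and $S_1$. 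Monotonicity of $\varphi_c$ identifies these two nested intersections, so the realizing direction on $S_1$ is forced to be $\varphi_c(\u)$ (or, if the intersection is a nondegenerate interval, any direction in it works, and $\varphi_c(\u)$ lies in it). Taking closures gives the equality of coding spaces.

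The main obstacle I expect is the well-definedness and continuity of $\varphi_c$ at \emph{irrational} directions where the geodesic flow is minimal: there the "forbidden interval'' associated to an individual saddle connection shrinks, and one must show that the nested intersection over all relevant saddle connections is a single point on $S_1$ precisely when it is on $S_c$ — i.e. that the map on directions cannot collapse a point to an interval or vice versa. I would handle this by comparing, for each saddle connection $\sigma_i$ in $S_c$ and its partner $\tau_i$ in $S_1$, the endpoints of the forbidden intervals and showing these endpoint sets are order-isomorphic via a single monotone bijection; the uniformity across all $i$ of this order-isomorphism is what forces $\varphi_c$ to be a continuous degree-one circle map rather than merely a monotone relation. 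This uniformity should ultimately come from the fact (Theorem~\ref{thm:isotopic_affine_action}) that the affine automorphism groups act compatibly, so the whole picture of saddle connections and their directions is equivariant under a common group $\G^\pm$, and a $\G^\pm$-equivariant monotone correspondence on $\Circ$ is automatically a circle homeomorphism onto its image.
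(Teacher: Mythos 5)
There is a genuine gap, and it sits exactly where the paper does its real work. Your plan leans on two things that are not available: (i) Theorem~\ref{thm:same_geodesics} as an input, and (ii) the assertion that the set of directions realizing a given code forms a ``nested sequence of quadrilaterals in direction space'' whose intersection transfers from $S_c$ to $S_1$ under $\varphi_c$. Point (i) is circular: in the paper, Theorem~\ref{thm:same_geodesics} is a corollary of Theorem~\ref{thm:same_geodesics2} (via $\Omega_c=\overline{\bigcup_\u \Omega_{c,\u}}$), and there is no independent proof of the undirected statement to bootstrap from. Even granting it, knowing a code is realized in \emph{some} direction on $S_1$ does not locate that direction at $\varphi_c(\u)$. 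Point (ii) is where the content lives: one must show, for every finite word $w$ realizable in direction $\u$ on $S_c$, that the interval of directions realizing $w$ on $S_1$ contains $\varphi_c(\u)$, and in particular that the endpoints of these intervals are cut out by \emph{corresponding} saddle connections on the two surfaces. The paper proves this by induction on the length of $w$: it unfolds the crossed triangles into the plane (Proposition~\ref{prop:dev}), identifies the ``leading top/bottom vertices'' of the developed edges, and shows via the sign-of-wedge-product condition ($h$-relatedness, Lemma~\ref{lem:same_geodesics2}) that the indices of the leading vertices agree on $S_c$ and $S_1$ at every stage, splitting into three cases according to whether the new vertex becomes the unique leader, ties the leader, or is not a leader. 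Your proposal asserts the conclusion of that induction (``the intersection/nesting structure of these intervals transfers'') without an argument; the order-isomorphism of endpoint sets you invoke is precisely the statement to be proved, and it does not follow formally from Theorem~\ref{thm:isotopic_triangulations} or from equivariance, since an individual leading vertex is generally not the endpoint of a saddle connection in the fixed triangulation but of an auxiliary saddle connection (a chord of the convex set $V$) whose existence and orientation must be controlled by Lemma~\ref{lem:same_saddles} and the $h$-relatedness hypothesis.

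Your construction of $\varphi_c$ is closer to workable, though different from the paper's: you pin it down on the (parabolic) rational directions and extend by monotonicity, whereas Proposition~\ref{prop:varphi_c} obtains it in one stroke as the boundary extension of a lift of a homeomorphism between the thrice-punctured spheres $\H^2/\G^+_c$ and $\H^2/\G^+_1$, which gives continuity, monotonicity, degree one, and the semi-conjugacy $\varphi_c\circ G_c=G_1\circ\varphi_c$ simultaneously. One correction: for $c>1$ the limit set of $\G^\pm_c$ is a Cantor set and $\varphi_c$ collapses each complementary gap to a point (a devil's-staircase map), so your closing claim that an equivariant monotone correspondence is ``automatically a circle homeomorphism onto its image'' is false as stated; only non-strict monotonicity and degree one hold, and that is all the theorem asserts.
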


The map $\varphi_c$ is defined using the action of the Veech groups on $\Circ$.
See Proposition \ref{prop:varphi_c}.

Observe that Theorem \ref{thm:same_geodesics} is really a corollary of Theorem \ref{thm:same_geodesics2}
as $\Omega_c$ is the closure of $\bigcup_\u \Omega_{c,\u}$. This union is independent of $c$ since $\varphi_c$ is always injective.

Theorems \ref{thm:isotopic_triangulations}, \ref{thm:same_geodesics}, and \ref{thm:same_geodesics2} hold more generally for deformations of translation surfaces with certain properties. See Lemmas \ref{lem:same_saddles}
and \ref{lem:same_geodesics2}.

\begin{remark}[Geodesics and codes]
\label{rem:Geodesics and codes}
We remark that an understanding of all the coding spaces $\Omega_{c,\u}$
for unit vectors $\u$ give a strong information about the coding of all geodesics,
not only interior geodesics. For instance, the codes of all saddle connections 
on $S_c$ in direction $\u$ which are not in the triangulation can be determined by $\Omega_{c,u}$. Namely, a word $w$ in the alphabet $\Lambda$ is the code of a saddle connection, if and only if $w$ appears in $\Omega_{c,u}$, and whenever $w$ appears, it is as a subword of
$\lambda_1 w \lambda_2$ or $\lambda_3 w \lambda_4$ with $\lambda_1,\ldots, \lambda_4 \in \Lambda$ and 
where $\lambda_1 \neq \lambda_3$
and $\lambda_2 \neq \lambda_4$. A {\em separatrix} is a straight-line trajectory leaving a singularity. The codes of sepeatrices in direction $\u$ which are not saddle connections can also be determined by looking at $\Omega_{c,u}$.
Geodesics with the same codes in $S_c$ and $S_c'$ are homotopic in the sense mentioned in the introduction.
\end{remark}

\section{The affine automorphisms}
\label{sect:automorphisms}

In this section, we find and describe elements of the affine automorphism groups of the surfaces $S_c$ defined in the previous section. At this point, we cannot assume Theorem \ref{thm:veech_groups}, which described the generators of the Veech group.
So, we use $\G_c^\pm \subset \GL(2, \R)$ to denote the group generated by $-I$, $A_c$, $B_c$, and $C_c$. Our description of affine automorphisms implies that that $\G_c^\pm \subset \Gamma(S_c)$. It also proves
the Isotopic Affine Action Theorem, assuming $G_c \in \Gamma(S_c)$ implies $G_c \in \G^\pm_c$ (which turns out to be true).

To ensure our notation for affine automorphisms used in the previous section makes sense we must prove Proposition \ref{prop:bijection}.
\begin{proof}[Proof of Proposition \ref{prop:bijection}]
Suppose $\psi \in \Aff(S_c)$ satisfies $\D(\psi)=I$. Then, $\psi$ maps saddle connections to saddle connections, and preserves their slope and length. 
In each surface $S_c$, there is only one saddle collection of slope one with length $\sqrt{2}$. Therefore, $\psi$ must fix all points on this saddle connection. Since
$\psi$ fixes a non-singular point and $\D(\psi)=I$, $\psi$ must be the identity map.
\end{proof}

It is useful to
work with alternate generators for $\G^\pm$. Define the elements $D=B A$, $E=(-I) C B$. The elements $\{A,D,E,-I\}$ also are
generators for $\G^\pm$. The corresponding matrices in $\G_c^\pm$ are given by
\begin{equation}
\label{eq:affop}
D_c=\left[\begin{array}{cc}
1 & 2 \\
0 & 1
\end{array}\right]
\quad
E_c=\left[\begin{array}{cc}
-c & c+1 \\
-c-1 & c+2
\end{array}\right]
\end{equation}
Note that $D_c$ and $E_c$ are orientation preserving parabolics.

\begin{lemma}[Affine Automorphisms]
\label{lem:affine_automorphisms}
For $c \geq 1$, $\G_c^\pm \subset \Gamma(S_c)$. Moreover, the affine automorphisms corresponding to generators of $-I, A_c, D_c, E_c \in \G_c^\pm$ may be described topologically (up to isotopy) as follows.
\begin{itemize}
\item $\widehat{-I}_c$ swaps the two pieces $Q_c^+$ and $Q_c^-$ of $S_c$, rotating each piece by $\pi$.
\item $\hat{A}_c$ is the automorphism induced by the Euclidean reflection in the vertical line $x=0$, which
preserves the pieces $Q_c^+$ and $Q_c^-$ of $S_c$ and preserves the gluing relations.
\item $\hat{D}_c$ preserves the decomposition of $S_c$ into maximal horizontal cylinders, and acts as a single right Dehn twist in each cylinder.
\item $\hat{E}_c$ preserves the decomposition of $S_c$ into maximal cylinders of slope $1$, and acts as a single right Dehn twist in each cylinder.
\end{itemize}
\end{lemma}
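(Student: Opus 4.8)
# Proof Proposal for Lemma \ref{lem:affine_automorphisms}

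\textit{Proof proposal.} The plan is to realize each of the matrices $-I$, $A_c$, $D_c$, and $E_c$ as the derivative of an explicit affine automorphism of $S_c$; since these four matrices generate $\G^\pm_c$ and $\Gamma(S_c)$ is a group, this gives $\G^\pm_c \subset \Gamma(S_c)$, and the explicit automorphisms supply the stated topological descriptions. Note $B_c = D_c A_c$ and $C_c = (-I)E_c B_c$, so the generating sets $\{-I,A_c,B_c,C_c\}$ and $\{-I,A_c,D_c,E_c\}$ are interchangeable. The cases of $-I$ and $A_c$ are built into the construction of $S_c$. For $-I$: rotation by $\pi$ about the origin was used to define $Q_c^-$ from $Q_c^+$, so the involution exchanging $Q_c^+$ and $Q_c^-$ via $-I$ carries parallel glued edges to parallel glued edges, hence descends to an affine automorphism with derivative $-I$. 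For $A_c$: a direct computation shows that $A_c T_c A_c^{-1} = T_c^{-1}$ as affine maps of $\R^2$ (on linear parts $A_c L_c A_c = L_c^{-1}$, and the translation part is carried along correctly), so $A_c$ sends the vertex $P_c^k = T_c^k(0,0)$ to $P_c^{-k}$; thus $A_c$ permutes the vertex set of $Q_c^+$, and likewise of $Q_c^-$, compatibly with the edge gluings. Therefore the Euclidean reflection in the line $x=0$ induces an affine automorphism $\widehat A_c$ with derivative $A_c$, which visibly preserves $Q_c^\pm$ and the gluings.

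The substantive work is the construction of $\widehat D_c$ and $\widehat E_c$. First I would describe the decomposition of $S_c$ into maximal horizontal cylinders (resp. maximal cylinders of slope $1$), locating in terms of the vertices $P_c^k$ the boundary saddle connections and the cylinders between them. The key point to verify is that every cylinder in the horizontal decomposition has modulus exactly $2$ — equivalently, circumference equal to twice height — so that the affine map performing one right Dehn twist in each cylinder has a single common linear part, namely $D_c = \left[\begin{smallmatrix} 1 & 2 \\ 0 & 1 \end{smallmatrix}\right]$. One then checks that these per-cylinder twists agree along the shared boundary saddle connections (each fixes the singularities and restricts to the boundary pointwise up to the common shear), so they patch to a single affine automorphism $\widehat D_c$ with $\D(\widehat D_c)=D_c$. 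The case of $\widehat E_c$ is entirely parallel after replacing the horizontal direction by the direction $(1,1)$ fixed by $E_c$, with the analogous modulus computation producing the shear $E_c$, whose unipotent part fixes slope $1$. The isotopy statements in the lemma then read off directly from these explicit descriptions (reflection, half-turn exchange, fibered single Dehn twists).

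The main obstacle is the modulus computation for the two cylinder decompositions: it requires genuinely understanding the geometry of the infinite polygons $Q_c^\pm$ for all $c \geq 1$, i.e.\ the convex hulls of the $T_c$-orbit of the origin, which lie on a parabola when $c=1$ and on a hyperbola when $c>1$. I would organize this by writing the vertices $P_c^k$ explicitly — for instance through the eigenvectors of $L_c$, noting $\det L_c = 1$ and $\operatorname{tr} L_c = 2c$ — reading off the widths and heights of successive cylinders, and using the $T_c$-equivariance (together with the already-constructed symmetries $\widehat{-I}_c$ and $\widehat A_c$) to reduce the bookkeeping to finitely many cases. Once the cylinder structure and the value $2$ of all moduli are established, verifying that the shears patch and computing the derivatives is routine.
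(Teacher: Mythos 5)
Your proposal is correct and follows essentially the same route as the paper: $\widehat{-I}_c$ and $\widehat A_c$ come directly from the symmetries of the construction (using $r\circ T_c\circ r^{-1}=T_c^{-1}$), and $\widehat D_c$, $\widehat E_c$ come from cylinder decompositions of constant modulus together with Veech's parabolic criterion (the paper's Proposition \ref{prop:parabolic}), whose content you re-derive when you say the per-cylinder twists patch along boundaries. One small correction: only the horizontal cylinders satisfy circumference $=2\times$ height; the paper computes that the slope-one cylinders have circumference $(2c+2)$ times height, and it is this value --- not $2$ --- that conjugates to the shear $E_c$, so your closing phrase ``the value $2$ of all moduli'' should be restricted to the horizontal decomposition.
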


\begin{remark}
The automorphism $\hat{F}_c$ corresponding to the element
$$F_c=C_c A_c=\left[ \begin{array}{cc}
c & c -1 \\
c+1 & c
\end{array}\right]$$ 
may be of special interest. The action of $\hat{F}_c$ preserves the decomposition into two pieces,
$Q_c^+$ and $Q_c^-$.
It acts on the top piece as $T_c$ acts on the plane. (See equation \ref{eq:generalized_rotation}). 
When $c \geq 1$, the surface $S_c$ decomposes into a countable number of maximal 
strips in each eigendirection of $F_c$. The action of $\hat{F}_c$ preserves this decomposition into strips.
We number each strip by integers, so that each strip numbered by $n$ is adjacent to the 
strips with numbers $n \pm 1$. This numbering can be chosen so that 
the action of $\hat{F}_c$ sends each strip numbered by $n$ to the strip numbered 
$n+1$. So, the action of $\hat{F}_c$ for $c \geq 1$ is as nonrecurrent as possible. Given any compact set 
$K \subset S_c \smallsetminus \Sigma$, there is an $N$ so that for $n>N$, $\hat{F}_c^n(K) \cap K = \nullset$.
\end{remark}

We begin by stating a well known result that gives a way to detect parabolic elements inside the
Veech group. The idea is that a Dehn twist may be performed in a cylinder by a parabolic. See
figure \ref{fig:dehn}.
A cylinder is a subset of a translation surface isometric to $\R/k\Z \times [0,h]$. The ratio $\frac{h}{k}$ is called the {\em modulus} of the cylinder.

\begin{proposition}[Veech {\cite[\S 9]{V}}]
\label{prop:parabolic}
Suppose a translation surface has a decomposition into cylinders $\{C_i\}_{i \in \Lambda}$ 
in a direction $\theta$. Suppose further there is a real number $m\neq 0$ such that for 
every cylinder $C_i$, the modulus of $M_i$ of $C_i$ satisfies $m M_i \in \Z$.
Then, there is an affine automorphism of the
translation surface preserving the direction $\theta$, fixing each point on the boundary of each cylinder, 
and acting as an $mM_i$ power of single right Dehn twist in each cylinder $C_i$. The derivative this affine 
automorphism is the parabolic 
$$R_\theta \circ \left[\begin{array}{rr} 1 & m \\ 0 & 1\end{array}\right] \circ R_\theta^{-1},$$
where $R_\theta \in \SO(2)$ rotates the horizontal direction to direction $\theta$.
\end{proposition}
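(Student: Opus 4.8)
The plan is to reduce to the horizontal case, build the automorphism one cylinder at a time as an explicit linear shear, and then patch these shears together along the cylinder boundaries.

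First I would apply $R_\theta^{-1} \in \SO(2)$ to $S$, replacing $S$ by $R_\theta^{-1}(S)$. This is an isometry that carries the direction $\theta$ to the horizontal direction, so it replaces the given cylinder decomposition by a decomposition into horizontal cylinders with the same moduli. It therefore suffices to produce, on a translation surface decomposed into horizontal cylinders $\{C_i\}_{i \in \Lambda}$ with $m M_i \in \Z$ for every $i$, an affine automorphism $\Phi$ with derivative $\left[\begin{smallmatrix} 1 & m \\ 0 & 1\end{smallmatrix}\right]$ which fixes each cylinder boundary pointwise and acts as the $(mM_i)$-th power of a single right Dehn twist on $C_i$; conjugating $\Phi$ back by $R_\theta$ then yields the stated parabolic. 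Second, on a single horizontal cylinder, identify $C_i$ isometrically with $\R/k_i\Z \times [0,h_i]$, with coordinates chosen to agree with the translation-surface charts up to translation (the choice is canonical up to a rotation of the $\R/k_i\Z$ factor, which does not affect the map below). Define $\phi_i : C_i \to C_i$ by $\phi_i(x,y) = (x+my,\, y)$. Since $m M_i = m h_i / k_i \in \Z$, the map $x \mapsto x + m h_i$ is the identity on $\R/k_i\Z$, so $\phi_i$ is a well-defined homeomorphism of $C_i$; it restricts to the identity on each boundary circle $y=0$ and $y=h_i$, it has derivative $\left[\begin{smallmatrix} 1 & m \\ 0 & 1\end{smallmatrix}\right]$ at every interior point, and, being the shear by $m h_i$ across the cylinder, it is precisely the $(mM_i)$-th power of a single right Dehn twist.

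Third I would glue the $\phi_i$ together. The closed cylinders $\overline{C_i}$ cover $S$, and their pairwise intersections lie in the union of the horizontal saddle connections and singularities of $S$, which is exactly where each $\phi_i$ restricts to the identity. Hence the maps $\phi_i$ agree on all overlaps and assemble to a single map $\Phi : S \to S$; $\Phi$ is a bijection, being a bijection on each $\overline{C_i}$ and compatible along the shared boundaries. For continuity at a point $p$ lying on a horizontal saddle connection or at a singularity: every point near $p$ lies in some $\overline{C_i}$ that contains $p$, and on that cylinder $\phi_i$ fixes the boundary and is Lipschitz, so $\phi_i(q) \to p$ as $q \to p$; applying the same reasoning to $\phi_i^{-1}(x,y) = (x-my,\,y)$ shows $\Phi^{-1}$ is continuous, so $\Phi$ is a homeomorphism. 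On the interior of each cylinder $\Phi$ is affine with derivative $\left[\begin{smallmatrix} 1 & m \\ 0 & 1\end{smallmatrix}\right]$, so $\Phi \in \Aff(S)$ with the asserted derivative and the asserted action on each cylinder; conjugating by $R_\theta$ finishes the proof.

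The only step requiring real care — and the main, albeit mild, obstacle — is the gluing: one must use that $\{C_i\}_{i\in\Lambda}$ is a genuine cylinder decomposition, i.e.\ that the complement of the open cylinders is a union of horizontal saddle connections and singular points, so that the ``fixes the boundary'' property makes the local pieces $\phi_i$ compatible and forces continuity of the glued map. Once that structural fact is in hand, the whole construction is local and works verbatim for countable $\Lambda$, which is the relevant setting for the infinite-area surfaces $S_c$.
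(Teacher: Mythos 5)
Your construction is correct and is exactly the standard shear-each-cylinder-and-glue argument that the paper only sketches: the paper cites Veech~\cite[\S 9]{V} for this proposition and indicates the idea in the figure caption (shear a cylinder, then return it by an affine map of derivative $I$), which is precisely your $\phi_i$. Your writeup supplies the details the paper omits — well-definedness from $mM_i\in\Z$, pointwise fixing of boundaries, and the gluing along horizontal saddle connections — and these are all handled correctly.
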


\begin{figure}[ht]
\begin{center}
\includegraphics[width=4.5in]{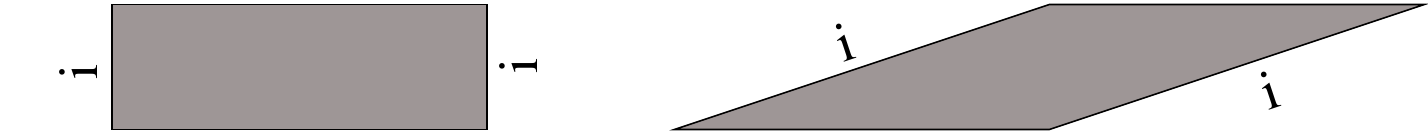}
\caption{The right cylinder is obtained by applying a shear to the left cylinder. There is an affine homeomorphism
from the right cylinder to the left with derivative $I$. The composition of these maps is used in Proposition \ref{prop:parabolic}.}
\label{fig:dehn}
\end{center}
\end{figure}

In the cases of $\widehat D_c$ and $\widehat E_c$, each $M_i$ will be equal, hence we get an affine automorphism which acts by a single right 
Dehn twist in each cylinder.

\begin{proof}[Proof of Lemma \ref{lem:affine_automorphisms}]
Recall, the surface $S_c$ for $c \geq 1$ was built from two pieces $Q_c^+$ and $Q_c^-$. We defined 
$Q_c^+$ to be the convex hull of the vertices 
$P_i=T_c^i(0,0)$ for $i \in \Z$, with $T_c$ as in equation \ref{eq:generalized_rotation}.
Next $Q_c^-$ was defined to be $Q_c^+$ rotated by $\pi$. $S_c$ is built by gluing the edges of $Q_c^+$
to its image under $Q_c^-$ by parallel translation. Indeed, it is obvious from this definition 
that the rotation by $\pi$ which swaps $Q_c^+$ and $Q_c^-$ restricts to an affine automorphism of the surface, $\widehat{-I}_c \in \Aff(S_c)$. The derivative of $\widehat{-I}_c$ is $-I_c=-I$, which therefore lies in $\Gamma(S_c)$.

Now we will see that the reflection in the line $x=0$ induces an affine automorphism ($\hat{A}$). The reflection
is given the map $r:(x,y)\mapsto(-x,y)$. $Q_c^+$ is preserved because $r(P_i)=P_{-i}$, which follows from the fact
that $r \circ T_c \circ r^{-1} = T_c^{-1}$. The reflection acts in the same way on $Q_c^-$, and thus preserves
gluing relations. Thus, $\hat{A}_c$ is an affine automorphism and its derivative, $A_c$, lies in the Veech group.

We will show that each cylinder in the horizontal cylinder decomposition has the same modulus,
which will prove that $\hat{D}_c$ lies in the affine automorphism group by proposition \ref{prop:parabolic}. Let
$P_i=(x_i,y_i)$. The circumference of the $n-th$ cylinder numbered vertically is given by
$C_n=2 x_{n-1}+2 x_{n}$, and the height is $H_n=y_n-y_{n-1}$. Now let $(x_{n-1},y_{n-1})=(\hat x, \hat y)$, so that
by definition of $T_c$, we have $(x_{n},y_{n})=(c \hat{x}+(c-1)\hat{y}+1,(c+1)\hat{x}+c \hat{y}+1)$. This makes
$$C_n=2(c+1)\hat{x}+2(c-1)\hat{y}+2 \quad \textrm{and} \quad H_n=(c+1)\hat{x}+(c-1)\hat{y}+1.$$
So that the modulus of each cylinder is $\frac{1}{2}$. It can be checked that the parabolic fixing the horizontal direction
and acting as a single right Dehn twist in cylinders of modulus $\frac{1}{2}$ is given by $D_c$.

It is not immediately obvious that there is a decomposition into cylinders in the slope $1$ direction. To see this,
note that there is only one eigendirection corresponding to eigenvalue $-1$ 
of the $\SL(2,\R)$ part of the affine transformation
$$U:(x,y) \mapsto (-cx+(c-1)y+1,-(c+1)x+cy+1)$$
is the slope one direction. It also has the property that $U \circ T_c \circ U^{-1}=T_c^{-1}$, which can be used to show that $U$ swaps $P_i$ with $P_{1-i}$. Therefore segment $\overline{P_{1-i} P_i}$ always has slope one.
The $n$-th slope one 
cylinder is formed by considering the union of trapezoid
obtained by taking the convex hull of the points $P_{n}$, $P_{n+1}$, $P_{1-n}$ and $P_{-n}$ and the same trapezoid rotated by $\pi$ inside $Q_c^-$. Now we will show that the moduli of these cylinders are all equal.
The circumference and height of the $n$-th cylinder in this direction is given below.
$$C_n=\sqrt{2}(x_{n}-x_{1-n}+x_{n+1}-x_{-n})$$
$$H_n=\frac{\sqrt{2}}{2}(x_{n+1}-x_{n},y_{n+1}-y_{n}) \cdot (-1,1)$$
Let $P_n=(\hat{x},\hat{y})$. Then $P_{-n}=(-\hat{x},\hat{y})$. We also have:
$$P_{n+1}=\big(c \hat{x}+(c-1)\hat{y}+1,(c+1)\hat{x}+c \hat{y}+1\big).$$
$$P_{1-n}=\big((-c)\hat{x}+(c-1)\hat{y}+1, (-c-1)\hat{x}+c \hat{y}+1\big).$$
By a calculation, we see that $C_n=\sqrt{2}(2c+2)\hat{x}$ and $H_n=\sqrt{2}\hat{x}$. So, the modulus of each cylinder is $\frac{1}{2c+2}$. Thus by proposition \ref{prop:parabolic},
$\hat{E}_c$ lies in the affine automorphism group. We leave it to the reader to check that the derivative 
of $\hat{E}_c$ must be $E_c$. 
\end{proof}

We now prove the Isotopic Affine Actions Theorem, assuming Theorem \ref{thm:veech_groups} which classifies the Veech group.
\begin{proof}[Proof of Theorem \ref{thm:isotopic_affine_action}.]
It is enough to prove the statement for the generators $-I, A, D, E \in \G^\pm$. Let $G$ be one of these generators.
It can be observed that the affine actions $\widehat G_c:S_c \to S_c$ act continuously on the bundle $B$ of surfaces $S_c$ over $\{c~:~c \geq 1\}$. We must show that $h_{c,c'} \circ \widehat G_c$ and $\widehat G_{c'} \circ h_{c,c'}$ are isotopic.
Let $c''\geq 1$ be a number between $c$ and $c'$. Consider the map
$\phi_{c''}:S_c \to S_c'$ given by 
$\phi_{c''}=h_{c'',c'} \circ \widehat G_{c''} \circ h_{c,c''}$. Continuously moving $c''$ from $c$ to $c'$
yields the desired isotopy.
\end{proof}

\section{A classification of saddle connections}
\label{sect:saddles}
In this section, we will classify the directions in $S_c$ where saddle connections can appear. We begin with $S_1$.

We use the notation $\frac{p}{q} \equiv \frac{r}{s} \pmod{2}$ to say that once the fractions are reduced 
to $\frac{p'}{q'}$ and $\frac{r'}{s'}$ so that numerator
and denominator are relatively prime, we have $p' \equiv r' \pmod{2}$ and $q' \equiv s' \pmod{2}$. 
We use $\frac{p}{q} \not\equiv \frac{r}{s} \pmod{2}$ to denote the negation of this statement.

In the statement of the following proposition, we use the concept of the holonomy of a saddle connection. Given any  
path $\gamma:[0,1] \to S$ in a translation surface which avoids the singularities on $(0,1)$,
there is a {\em development} of $\gamma$ into the plane. This is a curve $\dev(\gamma):[0,1] \to \R^2$ up to post-composition by with a translation, defined by following the local charts from $S$ to the plane. The {\em holonomy vector}
$\hol(\gamma)$ is obtained by subtracting the endpoint of $\dev(\gamma)$ from its starting point. The quantity
$\hol(\gamma)$ is invariant under homotopies which fix the endpoints. The notions of holonomy and the developing map are common in the world of
$(G,X)$ structures; see section $3.4$ of \cite{Thurston}, for instance.

\begin{proposition}[Saddle connections of $S_1$]
\label{prop:S_1_saddles}
Saddle connections $\sigma \subset S_1$ must have integral holonomy $\hol_1(\sigma)\in \Z^2$. 
A direction contains saddle connections if and only if it has rational slope, $\frac{p}{q}$, 
with $\frac{p}{q} \not \equiv \frac{1}{0} \pmod{2}$.
\end{proposition}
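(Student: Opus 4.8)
The plan is to prove this proposition in two halves: first that every saddle connection of $S_1$ has integer holonomy, and second that the rational directions carrying saddle connections are exactly those with slope $\frac{p}{q} \not\equiv \frac{1}{0} \pmod 2$. For the first half, I would observe that the vertices of $Q_1^+$ are the integer lattice points $\{(n,n^2) : n \in \Z\}$, and the vertices of $Q_1^-$ are $\{(n,-n^2):n\in\Z\}$, all of which lie in $\Z^2$. After the gluing, pick a development of $S_1$ so that one vertex develops to $(0,0)$; then because all polygon vertices lie in $\Z^2$ and edges are glued by translations which (being differences of vertices) lie in $\Z^2$, the developing image of the singular set lies in a single coset of $\Z^2$, namely $\Z^2$ itself. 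A saddle connection runs from a singularity to a singularity, so its holonomy is the difference of two such developed points, hence lies in $\Z^2$. (One must check that the translations identifying edges preserve $\Z^2$; this is immediate since each such translation carries an edge of $Q_1^+$ to the parallel edge of $Q_1^-$, both of which have endpoints in $\Z^2$.)

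For the second half, first note that by what was just shown every saddle-connection direction has rational slope. To rule out slope $\frac{p}{q} \equiv \frac{1}{0} \pmod 2$ — i.e. odd numerator, even denominator, the ``vertical-like'' directions — I would use the affine automorphisms already constructed in Lemma~\ref{lem:affine_automorphisms}: the group $\G_1^\pm$ acts on slopes via its image in $\PGL(2,\Z)$, and by Corollary~\ref{cor:lattice property} the orientation-preserving part is the level-two congruence subgroup. This subgroup acts on $\P^1(\Q)$ with three orbits, represented by $\frac{0}{1}$, $\frac{1}{0}$, and $\frac{1}{1}$, distinguished exactly by the mod-$2$ reduction of $(p,q)$. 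So it suffices to show that the direction of slope $\frac{1}{0}$ (the vertical direction) contains no saddle connection, and that the horizontal ($\frac{0}{1}$) and slope-one ($\frac{1}{1}$) directions do. The latter two are easy: Lemma~\ref{lem:affine_automorphisms} exhibits $S_1$ as decomposed into horizontal cylinders and into slope-one cylinders, and the boundaries of these cylinders are unions of saddle connections. For the vertical direction, I would argue directly from the geometry of $Q_1^+$: because the boundary of the infinite polygon $Q_1^+$ consists of edges of slopes $y_{n}-y_{n-1} = 2n-1$ over $x$-step $1$ (slopes $\dots,-3,-1,1,3,\dots$), no edge is vertical, and the vertical straight-line flow starting from a singularity travels upward into the interior of $Q_1^+$ (or downward into $Q_1^-$) and — since the parabola $y=x^2$ opens upward — escapes to infinity without ever returning to a vertex; equivalently, the vertical direction decomposes $S_1$ into bi-infinite strips rather than cylinders, as already asserted in the introduction.

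The main obstacle I anticipate is the vertical-direction claim: making rigorous that a vertical separatrix from a singularity of $S_1$ never hits another singularity, and more strongly that there is \emph{no} vertical saddle connection anywhere on the surface (not just from the ``outermost'' vertices). The clean way to handle this is to combine the holonomy constraint with the congruence-subgroup orbit structure: any vertical saddle connection would have holonomy $(0,m)\in\Z^2$, and applying the parabolic $\widehat{D}_1$ (derivative $\left[\begin{smallmatrix}1&2\\0&1\end{smallmatrix}\right]$, fixing the horizontal direction and each cylinder boundary) repeatedly would produce saddle connections of holonomy $(2km, m)$ for all $k$, all of the same length as measured appropriately — but in fact the better route is to note that a vertical saddle connection $\sigma$ would be disjoint from the horizontal cylinder boundaries, hence contained in a single horizontal cylinder $C_n \cong \R/\ell_n\Z \times [0,H_n]$ with $H_n = 2n-1$; a vertical segment inside a cylinder joins the two boundary components and so would be a cross-curve, but its endpoints would then be \emph{non-singular} boundary points unless it happens to land on vertices, and a short count of the $x$-coordinates of the vertices on the top and bottom of $C_n$ shows these never align vertically. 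I would write this last count out carefully, as it is the crux; everything else is bookkeeping with the already-established structure.
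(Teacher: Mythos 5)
Your overall strategy is the same as the paper's: integrality of holonomy from the integer vertices, then the orbit structure of the congruence-two subgroup on primitive vectors reduces everything to the horizontal, slope-one, and vertical directions, of which only the last must be shown to carry no saddle connection. Two problems, one substantive. The substantive one is your ``clean'' fallback for the vertical direction: the premise that a vertical saddle connection ``would be disjoint from the horizontal cylinder boundaries, hence contained in a single horizontal cylinder'' is false. Disjointness in this paper means meeting only at endpoints, and saddle connections in \emph{different} directions cross transversally; a vertical ray going up from the vertex $(m,m^2)$ meets the horizontal segment from $(-n,n^2)$ to $(n,n^2)$ in its interior for every $n>|m|$, so it crosses infinitely many horizontal cylinder boundaries. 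The single-cylinder reduction therefore never gets off the ground, and even a correct vertex-alignment count within one cylinder would not rule out a vertical saddle connection spanning several cylinders. The irony is that the argument you set aside as insufficiently rigorous is in fact complete: a saddle connection is by definition a separatrix, all vertices are identified to the two singularities, and at each singularity every upward vertical separatrix lies in a corner wedge of $Q_1^+$ (the upward direction points out of $Q_1^-$ at every vertex of $Q_1^-$, and into $Q_1^+$ at every vertex of $Q_1^+$), whence it travels up the ray $\{(n,t):t>n^2\}$ inside the single chart $Q_1^+$ forever, meeting no edge and no vertex; symmetrically for downward separatrices in $Q_1^-$. That enumeration disposes of all vertical saddle connections, outermost or otherwise, and is essentially what the paper's one-line assertion is standing on.

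The second, minor issue is a circularity: you invoke Corollary~\ref{cor:lattice property} to identify the orientation-preserving Veech group, but that corollary depends on Theorem~\ref{thm:veech_groups}, whose proof ultimately rests on the present proposition (via Theorem~\ref{thm:classification_of_saddle_connections}). The paper avoids this by using only the containment $\G_1^\pm\subset\Gamma(S_1)$ supplied by Lemma~\ref{lem:affine_automorphisms} together with the direct matrix computation that $\G_1^\pm$ \emph{is} the congruence-two subgroup; that already gives transitivity on each mod-$2$ class of primitive vectors, which is all the argument needs. With those two repairs your proof coincides with the paper's.
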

\begin{proof}
The holonomy of a saddle connection must be integral, because the surface $S_1$ was built from two (infinite) polygons
with integer vertices. The subgroup $\G_1^\pm \subset \Gamma(S_1)$ (generated by $A_1$, $D_1$, $E_1$, and $-I_1$)
is the congruence two subgroup of
$\widehat \SL^\pm(2,\Z)$. Thus, the linear action of  $\G_1^\pm$ on the plane preserves the collection of vectors
$$\textit{RP}=\{(p,q)\in \Z^2\smallsetminus\{(0,0)\}~|~\textrm{$p$ and $q$ are relatively prime}\}.$$
Furthermore, the orbits of $(0,1)$, $(1,1)$, and $(1,0)$ under $\Gamma(S_1)$ are disjoint and cover $\textit{RP}$. 
Thus, up to the affine automorphism group, the geodesic flow in a direction of rational slope looks like the geodesic flow in the horizontal, slope one, 
or vertical directions. There are saddle connections in both the horizontal and slope one directions, but not in the vertical direction.
Therefore, rational directions contain saddle connections unless they are in the orbit of the vertical direction under 
$\G_1^\pm$.
\end{proof}

In order to make a similar statement for $S_c$, we will need to describe the directions that contain saddle connections. 
We will find it useful to note that there is a natural bijective correspondence between directions in the plane modulo rotation by $\pi$, and the boundary of the hyperbolic plane $\partial \H^2$. 
This can be seen group theoretically. Directions in the plane
correspond to $\Circ= \SL(2,\R)/\textrm{H}$ where 
$$\textrm{H}=\{G \in \SL(2,\R)~|~G\big(\left[\begin{array}{c}1 \\ 0\end{array}\right]\big)= 
\left[\begin{array}{c}\lambda \\ 0\end{array}\right] \textrm{ for some $\lambda > 0$}\}.$$
Both directions mod rotation by $\pi$ and the boundary of the hyperbolic plane correspond to
the real projective line, $\RP^1=\SL(2,\R)/\textrm{H}^\pm$, where 
$$\textrm{H}^\pm=\{G \in \SL(2,\R)~|~G\big(\left[\begin{array}{c}1 \\ 0\end{array}\right]\big)= 
\left[\begin{array}{c}\lambda \\ 0\end{array}\right] \textrm{ for some $\lambda\neq 0$}\}.$$

Let $\Circ=(\R^2\smallsetminus \{(0,0)\})/\R_{>0}$, be the collection of rays leaving the origin.
Consider the left action of the groups $\G^\pm_c$ on $\Circ$. We have the following.
\begin{proposition}[Semi-conjugate actions]
\label{prop:varphi_c}
For all $c > 1$, there is a continuous (non-strictly) monotonic map $\varphi_c:\Circ \to \Circ$ of degree one so that the following
diagram commutes for all $G \in \G^\pm$.
$$
\begin{CD}
\Circ @>G_c>> \Circ \\
@VV\varphi_cV @VV\varphi_cV 
\\
\Circ @>G_1>> \Circ
\end{CD}$$
We may also assume that $\varphi_c$ preserves the horizontal ray $\{(x,0)~:x>0\}$ and the slope one ray $\{(x,x)~:~x >0\}$. The map $\varphi_c$ commutes with the rotation of the plane by $\pi$.
\end{proposition}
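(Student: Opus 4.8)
The plan is to build $\varphi_c$ by matching up the two actions on the boundary circle $\Circ \cong \partial \H^2$ via their common combinatorial structure. Both $\G^\pm_c$ and $\G^\pm_1$ are faithful representations of the same abstract group $\G^\pm = (\Z_2 \ast \Z_2 \ast \Z_2)\oplus \Z_2$ acting on $\H^2$ by reflections in the three geodesics associated to $A$, $B$, $C$, with a fundamental domain bounded by those geodesics (this is exactly the Ping-pong picture described after Theorem \ref{thm:veech_groups}, valid for all $c \geq 1$). The key geometric distinction, visible in Figure \ref{fig:veechgroup}, is that for $c > 1$ the reflecting geodesics of $A_c$ and $C_c$ are disjoint and non-asymptotic, so the fundamental polygon has an edge ``at infinity'' — an interval $I_c \subset \partial\H^2$ between the two ideal endpoints of those geodesics — whereas for $c = 1$ the two geodesics are asymptotic and the fundamental domain touches the boundary in a single ideal point $p_1$. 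First I would normalize coordinates so that the reflecting geodesics of $A_c$ and $A_1$ coincide, and likewise for $B$; this is possible since $A_c = A_1$ and $B_c = B_1$ as matrices, and it is precisely the requirement that $\varphi_c$ fix the horizontal ray (the attracting/repelling data of the relevant parabolics) and the slope-one ray.

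The construction itself: $\varphi_c$ should collapse, for each element $g \in \G^\pm$, the translate $g_c(I_c) \subset \partial\H^2$ onto the single point $g_1(p_1)$, and be a homeomorphism on the complement. Concretely, one first defines $\varphi_c$ on the orbit $\G^\pm\cdot\{I_c,\,p_c^\ast\}$ — where the $I_c$'s are the ``gaps'' and their images under $g_1$ are points — using the group isomorphism to transport the combinatorial address (the reduced word in $A,B,C$) of each boundary piece. Since the union of the $g_c(I_c)$ together with the limit set is all of $\Circ$ (the group acts on $\partial\H^2$ with the interiors of the translates of the fundamental domain's ideal edge being dense complementary intervals), and the limit set of $\G^\pm_c$ is a Cantor-like set order-isomorphic to that of $\G^\pm_1$ when $c=1$ it is everything, and when $c>1$ it is a proper subset — $\varphi_c$ extends uniquely by order-continuity to a monotone degree-one map $\Circ \to \Circ$. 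Commutativity of the diagram with $G_c, G_1$ is then immediate from the way the combinatorial addresses were transported: $G_1 \circ \varphi_c$ and $\varphi_c \circ G_c$ agree on the dense set of endpoints of gap intervals, hence everywhere. Invariance under the rotation by $\pi$ follows because $-I$ is central and acts trivially on $\Circ = \partial\H^2$ — or, working with the double cover by rays rather than $\RP^1$, because $-I_c = -I_1 = -I$ already agree.

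The main obstacle, and where I would spend the most care, is verifying the order-isomorphism between the two ``gap structures'' and the resulting continuity/monotonicity — i.e., that collapsing each $g_c(I_c)$ to a point is consistent (no two gaps that must both collapse to points are forced to collapse to the \emph{same} point in a way that violates monotonicity) and that the nesting/cyclic order of the intervals $g_c(I_c)$ around $\Circ$ is the same combinatorial pattern for every $c>1$ and degenerates correctly at $c=1$. This is really a statement about the combinatorics of the Ping-pong action: one shows that for a reduced word $g$, the interval $g_c(I_c)$ sits inside $\partial\H^2$ at a location determined by the sequence of half-planes cut out by the reflecting geodesics, and this location (as an element of the cyclic order) does not depend on $c$. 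Granting that, both $\varphi_c$ and the commuting square drop out formally. A secondary, more routine point is checking that the normalization forcing $\varphi_c$ to fix the horizontal and slope-one rays is compatible with the above — this amounts to noting that the horizontal ray is the fixed ray of the parabolic $D_c$ (independent of $c$) and the slope-one ray is the fixed ray of $E_c$-type elements, so these two points already sit at combinatorially-determined, $c$-independent positions and are automatically preserved.
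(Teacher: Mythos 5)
Your construction is correct in outline, but it takes a genuinely different route from the paper. The paper forms the quotient surfaces $\Sigma_c=\H^2/\G_c^+$ and $\Sigma_1=\H^2/\G_1^+$ (topologically thrice-punctured spheres), chooses a homeomorphism $\Sigma_c\to\Sigma_1$ inducing the identity on $\G^+$ and invariant under the $\G^\pm/\G^+$ reflection, lifts it equivariantly to $\H^2$, and takes the induced monotone degree-one boundary map on $\RP^1$, finally lifting to the double cover $\Circ$; it also cites Ghys for abstract existence. You instead build $\varphi_c$ directly on the boundary by collapsing each gap $g_c(I_c)$ of the domain of discontinuity of $\G^\pm_c$ to the corresponding parabolic point $g_1(p_1)$ of the lattice $\G^\pm_1$ and extending by monotone continuity over the limit set, with equivariance checked on the dense set of gaps. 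This is exactly the characterizing property the paper only records as a remark after its proof (``the connected components of the domain of discontinuity are contracted to points by $\varphi_c$''), so in effect you define the map by the property the paper deduces. What each approach buys: the paper's argument is short but leaves implicit the nontrivial fact that the lifted homeomorphism of universal covers extends continuously to $\partial\H^2$ (not automatic from a homeomorphism of noncompact quotients, one of which has a funnel); your argument avoids that machinery and is more elementary, at the price of having to prove the combinatorial claim you correctly isolate --- that the cyclic order of the ping-pong gaps $g_c(I_c)$, indexed by reduced words, is independent of $c\ge 1$ and degenerates consistently at $c=1$. That claim is true and follows from the nesting structure furnished by the ping-pong lemma applied to the three reflecting geodesics (whose configuration on $\partial\H^2$ has the same combinatorics for all $c\ge 1$), but it is the one step you should write out in full; as stated it is asserted rather than proved. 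Two small points to tighten: (i) well-definedness of the extension over points of the limit set that are not gap endpoints needs the density of the parabolic orbit $\{g_1(p_1)\}$ in $\RP^1$ (so left and right limits agree); (ii) on the double cover $\Circ$ the element $-I$ acts as the antipodal map rather than trivially, so the lift of the $\RP^1$-map must be chosen equivariantly with respect to it --- which also pins down that $\varphi_c$ fixes $(1,0)$ rather than sending it to $(-1,0)$.
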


\begin{proof}
Existence of this map follows from \cite{Ghys87}, for instance. The following is a more natural proof using hyperbolic geometry.

Let $\G^+ \subset \G^\pm$ be those elements $G \in \G^\pm$ for which each $\det~G_c=1$. This is an index two subgroup and isomorphic to the product of the free group with two generators with $\Z/2\Z$.
We let $\G^+_c=\{G_c~:~G \in \G^+\} \subset \SL(2, \R)$. And use $P \G^+_c \subset \PSL(2, \R)$ to denote the projectivized groups.
The surfaces $\Sigma_c=\H^2/\G_c^+$ and $\Sigma_1=\H^2/\G_1^+$ are thrice punctured spheres whose fundamental groups are canonically identified with $\G^+$. 
Let $\psi: \Sigma_c \to \Sigma_1$ be a homeomorphism which induces the trivial map between the fundamental groups (as identified with $\G^+$). 
We may choose $\psi$ so that it is invariant under the action of $\G^\pm/\G^+$ (which acts on each surface as a reflective symmetry).
Since the fundamental groups are identified, there is a canonical lift to a map between the universal covers $\widetilde \psi:\widetilde \Sigma_c \to \widetilde \Sigma_1$
so that $\psi \circ G=G \circ \psi$ for all $G \in \G^+$, where $G$ is acting on the universal covers as an element of the covering group. 
This also holds for elements $G \in \G^\pm$ because of the $\G^\pm/\G^+$ invariance of $\psi$. Now noting the
canonical identification of these universal covers with $\H^2$ we have $\widetilde \psi:\H^2 \to \H^2$ so that $\widetilde \psi \circ G_c=G_1 \circ \widetilde \psi$ for all
$G \in \G^+$. This map induces a continuous monotonic degree $1$ map on the boundary of the hyperbolic plane $\RP^1$. The desired map $\varphi_c$ is a lift of this map to the double cover $\Circ$ of $\RP^1$.
\end{proof}

The map $\varphi_c$ is reminiscent of the famous devil's staircase, a continuous surjective map 
$[0,1] \to [0,1]$ which contracts intervals in the compliment of a Cantor set to points.
Indeed, the limit set $\Lambda_c$ of the group $\G^\pm_c$ is a $\G^\pm_c$-invariant Cantor set, and
the connected components of the domain of discontinuity, $\RP^1 \smallsetminus \Lambda_c$, 
are contracted to points by $\varphi_c$.

We will see that the saddle connections in $S_c$ and in $S_1$ are topologically the same. We will now make this notion rigorous. 
Given a path $\gamma:[0,1] \to S_c$, we use $[\gamma]$ to denote the equivalence class of paths which are homotopic to $\gamma$ relative to their endpoints. We do not allow these homotopies to pass through singular points. 

\begin{theorem}[Classification of saddle connections]
\label{thm:classification_of_saddle_connections}
There is a saddle connection in direction $\theta \in \Circ$ on $S_c$ for $c > 1$ if and only if 
there is a saddle connection in the direction 
$\varphi_c(\theta)$ on $S_1$. 
Equivalently, $\theta$ contains saddle connections if and only if $\theta$ is an image of
the horizontal or slope one direction under an element of $\G^\pm_c=\langle -I_c, A_c, D_c, E_c \rangle$. 
Furthermore, the collection of homotopy classes containing saddle connections are identical in $S_c$ and $S_1$. That is, for all saddle connections 
$\sigma \subset S_c$ there is a saddle connection in the homotopy class $[h_{c,1}(\sigma)]$ in $S_1$, and for
all saddle connections $\sigma' \subset S_1$ there is a saddle connection in the homotopy class $[h_{1,c}(\sigma')]$ in $S_c$.
\end{theorem}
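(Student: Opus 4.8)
The plan is to reduce everything to the two ``base'' directions---horizontal and slope one---and then transport the statement across the family $\{S_c\}$ using the affine automorphism groups together with the semi-conjugacy $\varphi_c$ of Proposition \ref{prop:varphi_c}. First I would establish the analogue of Proposition \ref{prop:S_1_saddles} for general $c \geq 1$: a direction $\theta \in \Circ$ contains a saddle connection on $S_c$ if and only if $\theta$ lies in the $\G_c^\pm$-orbit of the horizontal or slope-one direction. The ``if'' part is immediate, since $S_c$ visibly has saddle connections in both of those directions (the short slope-one segments $\overline{P_{1-i}P_i}$ built in the proof of Lemma \ref{lem:affine_automorphisms}, and horizontal segments between the two copies) and $\widehat{\G_c^\pm} \subset \Aff(S_c)$ carries saddle connections to saddle connections. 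For the ``only if'' part I would argue that a direction $\theta$ not in this orbit projects, under $\H^2/\G_c^\pm$, either to a point of the limit set $\Lambda_c$ or to a cusp/puncture; in the latter case $\theta$ is $\G_c^\pm$-equivalent to a cusp of the fundamental domain of Figure \ref{fig:veechgroup}, which corresponds precisely to the horizontal or slope-one direction (the third cusp, the vertical one, has no saddle connection because the surface decomposes into infinite strips there, exactly as in the $c=1$ case). Since $c > 1$ means the fundamental domain has no elliptic vertices, this accounts for every direction containing a periodic-type configuration; directions in $\Lambda_c$ are minimal and hence contain no saddle connection. I would phrase this carefully using that $\G_c^\pm$ acts on $\Circ$ and that $\varphi_c$ intertwines the $c$ and $1$ actions, so that $\theta$ is in the relevant orbit for $S_c$ exactly when $\varphi_c(\theta)$ is in the corresponding orbit for $S_1$; combined with the $c=1$ case this gives the first two (equivalent) assertions of the theorem.

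For the homotopy-class statement I would work one direction at a time. Fix a saddle connection $\sigma \subset S_c$; its direction is $G_c(\theta_0)$ for some $G \in \G^\pm$ and $\theta_0$ the horizontal or slope-one direction. Applying $\widehat{G_c}^{-1}$ reduces to the case $\theta_0 \in \{$horizontal$,$ slope one$\}$, and there the surface decomposes into a countable union of cylinders (or, for slope one, the trapezoidal cylinders from the proof of Lemma \ref{lem:affine_automorphisms}), each of which is a standard Euclidean cylinder; a saddle connection in direction $\theta_0$ is then determined up to homotopy by which cylinder it lies in and which boundary singularities it joins, data which is manifestly the same for $S_c$ and $S_1$ since the cylinder decompositions are combinatorially identical (they are glued from the combinatorially identical polygons $Q_c^\pm$, $Q_1^\pm$, via $h_{c,1}$, which respects the decomposition). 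Thus the homotopy class $[h_{c,1}(\sigma)]$ contains a saddle connection of $S_1$ in direction $\theta_0$; pushing back forward by $\widehat{G_1}$---and using Theorem \ref{thm:isotopic_affine_action} to know that $h_{c,1} \circ \widehat{G_c}$ and $\widehat{G_1} \circ h_{c,1}$ are isotopic, so the homotopy class is tracked correctly---gives a saddle connection of $S_1$ homotopic to $h_{c,1}(\sigma)$. The reverse inclusion is symmetric, using $h_{1,c}$ and $\varphi_c$ being a bijection onto its image on the relevant orbits.

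The main obstacle I expect is the ``only if'' direction of the orbit classification for $c > 1$: ruling out saddle connections in directions whose $\G_c^\pm$-orbit is not that of the horizontal or slope-one direction. Unlike the $c = 1$ case, where one has the clean arithmetic description $\RP = \{$coprime integer vectors$\}$ partitioned into three $\Gamma(S_1)$-orbits, for $c > 1$ there is no number-theoretic model and the group $\G_c^\pm$ is a (non-lattice) Fuchsian group with a nontrivial limit set $\Lambda_c$. The cleanest route is probably to prove directly that in any direction that is \emph{not} in the $\G_c^\pm$-orbit of a cusp, every bi-infinite trajectory that meets the interior is recurrent and in fact the flow is minimal, so there can be no saddle connection; this is where one genuinely needs the geometry of the cylinder/strip decompositions of $S_c$ rather than arithmetic, and it is the step most likely to require the detailed structure established in the preceding section on saddle connections. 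Everything after that classification is bookkeeping with $h_{c,c'}$, $\widehat{G_c}$, and Theorem \ref{thm:isotopic_affine_action}.
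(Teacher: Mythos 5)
There is a genuine gap, and you have correctly located where it sits: the ``only if'' half of the directional classification for $c>1$, i.e.\ ruling out saddle connections of $S_c$ in directions outside the $\G^\pm_c$-orbit of the horizontal and slope-one directions. But your proposed resolution --- show that in every direction projecting to the limit set $\Lambda_c$ the flow is minimal/recurrent and hence carries no saddle connection --- is not viable. That is a Veech-dichotomy-type statement, and the premise of the paper is that the dichotomy is unavailable for these infinite-area, infinite-genus surfaces; the behaviour of irrational directions is explicitly deferred to \cite{Hinf} and the sequel. (The implication you invoke also runs the wrong way: for compact translation surfaces, absence of saddle connections implies minimality, not conversely, so even granting minimality you would not be done.) Your argument for the homotopy-class statement in the direction $S_c\to S_1$ is likewise circular, since writing the direction of $\sigma\subset S_c$ as $G_c(\theta_0)$ already presupposes the classification being proved.

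The paper avoids dynamics entirely. The classification on $S_c$ is \emph{deduced} from the arithmetic classification on $S_1$ (Proposition \ref{prop:S_1_saddles}) via the combinatorial Lemma \ref{lem:same_saddles}: if $h:S\to T$ admits a triangulation of $S$ by saddle connections, pushes every saddle connection of $S$ into a homotopy class containing a saddle connection of $T$, and preserves the signs of wedge products of holonomies, then conversely every saddle connection of $T$ pulls back to a homotopy class containing a saddle connection of $S$. This is proved by induction on the number of triangle crossings, using the unfolding and convexity statement of Proposition \ref{prop:dev} to exhibit each saddle connection of $T$ as the diagonal of a convex quadrilateral whose sides have strictly smaller complexity; the diagonal then cannot be destroyed in $S$ without violating the sign condition. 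Proposition \ref{prop:lemma_satisfied} verifies the hypotheses for $h_{1,c}$ --- your orbit-plus-cylinder argument is essentially the verification of hypothesis (2), and the semi-conjugacy $\varphi_c$ together with the parabolic fixed-point argument gives hypothesis (3). The backward conclusion of the lemma then assigns to every saddle connection of $S_c$ a homotopic saddle connection of $S_1$, whose direction is already classified; uniqueness of geodesic representatives in a homotopy class and the intertwining by $\varphi_c$ yield both the directional statement and the ``furthermore'' clause. You should replace your minimality step by this transport argument, which is the actual content of the preceding section.
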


We will prove this theorem by first proving a more abstract lemma. Then we will demonstrate that $S_c$ and $S_1$ satisfy the 
conditions of the lemma. We need the following two definitions.

The {\em wedge product} between two vectors in $\R^2$ is given by
\begin{equation}
\label{eq:wedge}
(a,b) \wedge (c,d)=ad-bc.
\end{equation}
This is the signed area of the parallelogram formed by the two vectors.

The function $\textit{sign}:\R \to \{-1,0,1\}$ assigns one to positive numbers, zero to zero, and $-1$ to negative numbers.

\begin{lemma}
\label{lem:same_saddles}
Let $h:S \to T$ be a homeomorphism between translation surfaces satisfying the following statements.
\begin{enumerate}
\item $S$ admits a triangulation by saddle connections.
\item For every saddle connection $\sigma \subset S$ the homotopy class $[h(\sigma)]$ contains a saddle connection
of $T$.
\item Every pair of saddle connections $\sigma_1, \sigma_2 \subset S$ satisfies
$$\textit{sign} \big( \hol(\sigma_1) \wedge \hol(\sigma_2) \big) = 
\textit{sign} \big( \hol ( h (\sigma_1)) \wedge \hol(h(\sigma_2)) \big).$$
\end{enumerate}
Then, for every saddle connection $\sigma \subset T$, the homotopy class $[h^{-1}(\sigma)]$ contains a saddle connection
of $S$.
\end{lemma}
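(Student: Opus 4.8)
The plan is to fix a triangulation $\sT$ of $S$ by saddle connections (available by hypothesis (1)), and transport it across $h$. For each triangle $\Delta$ of $\sT$ with sides $\sigma_1,\sigma_2,\sigma_3$, hypothesis (2) gives saddle connections $\tau_i$ of $T$ with $\tau_i \in [h(\sigma_i)]$, and hypothesis (3) guarantees that the cyclic/orientation data of the $\tau_i$ matches that of the $\sigma_i$: in particular, since $\hol(\sigma_1)\wedge\hol(\sigma_2)\neq 0$ (the sides of a nondegenerate triangle are not parallel), the same holds for $\hol(\tau_1)\wedge\hol(\tau_2)$, so the $\tau_i$ are pairwise non-parallel and bound an immersed triangle with the same orientation. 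First I would argue that the three $\tau_i$ are pairwise disjoint except at common endpoints and bound an embedded triangular region $\Delta'$ in $T$: two saddle connections in prescribed homotopy classes whose holonomies satisfy the sign condition of (3) relative to all of $\sT$ cannot cross, because a transverse intersection point would force a sign reversal of some wedge product against a third saddle connection (one appearing in the triangulation near that point). Then I would check that, as $\Delta$ ranges over $\sT$, the resulting triangles $\Delta'$ fit together along shared edges exactly as the $\Delta$ do — this is forced because $\tau_i$ depends only on the homotopy class $[h(\sigma_i)]$, not on which triangle it came from — yielding a triangulation $\sT'$ of $T$ (or at least a locally finite simplicial complex immersed in $T$) combinatorially isomorphic to $\sT$ via $h$.

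The second half is to run the same construction backwards. Once $\sT'$ is a genuine triangulation of $T$, it pulls back under $h^{-1}$ to a decomposition of $S$ that is homotopic to $\sT$; now given an arbitrary saddle connection $\sigma\subset T$, I want to produce a saddle connection in $[h^{-1}(\sigma)]$. The idea is to read off the combinatorics of $\sigma$ from $\sT'$: a saddle connection is determined up to homotopy by the finite ordered sequence of triangles and edges of $\sT'$ it crosses together with the "turning" data at each edge. Transporting this combinatorial data through the simplicial isomorphism $\sT' \cong \sT$ gives a candidate combinatorial path in $S$; I would then argue that this combinatorial path is realized by an actual straight-line segment (a saddle connection) in $S$. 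For this last point I would invoke hypothesis (3) once more: the sign conditions guarantee that the candidate path in $S$ "turns the same way" at each triangulation edge as $\sigma$ does in $T$, which is exactly the convexity/visibility condition that lets one straighten a combinatorial arc between two singularities into a geodesic saddle connection. Equivalently, one can phrase this via the developing map: developing both $\sigma$ and its would-be preimage, the sign data forces the developed preimage to stay in the "cone of visible directions" from the starting singularity, so it reaches the target singularity as a straight segment.

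I expect the main obstacle to be the straightening step — showing that a combinatorial arc in $S$ with the correct turning data actually contains (is homotopic to) a genuine saddle connection. In the compact case this is standard (every homotopy class of arc between singularities with no backtracking contains a unique geodesic representative, possibly a concatenation of saddle connections, and the turning/sign data rules out the concatenation), but here $S$ is non-compact with infinite-angle cone points, so I must be careful that the geodesic representative exists and does not "escape to infinity" or degenerate. The way around this is to keep everything finite: $\sigma$ crosses only finitely many triangles of $\sT'$, so its preimage arc lies in a finite subcomplex $K\subset S$, which is a compact translation surface with boundary; the existence and uniqueness of the geodesic representative rel endpoints then follows from the compact theory applied to $K$ (after checking the geodesic does not hit $\partial K$, again using that the sign conditions keep it in the interior of the swept-out triangles). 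A secondary technical point is verifying that the $\tau_i$ genuinely bound \emph{embedded} triangles and glue up without overlaps; I would handle this by the same non-crossing argument, applied to pairs of triangulation edges, plus a covering/area count to rule out the immersed triangulation wrapping around $T$. The hypotheses are clearly tailored so that (3) does exactly this non-crossing bookkeeping, so the proof should be a matter of organizing these observations cleanly rather than finding a new idea.
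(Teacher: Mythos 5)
Your first half (transporting the triangulation of $S$ to a triangulation ${\mathcal T}_T$ of $T$) matches the paper's opening move, but the second half has a genuine gap at exactly the step you flag as ``the main obstacle.'' You propose to transport the combinatorial crossing data of $\sigma$ back to $S$ and then straighten the resulting arc, asserting that ``the turning/sign data rules out the concatenation,'' i.e.\ rules out the geodesic representative of $[h^{-1}(\sigma)]$ being a broken chain of saddle connections bending at singularities. But that assertion \emph{is} the lemma; nothing in your sketch derives it. Local turning data at each edge of the corridor of triangles is not enough: whether a single straight segment can traverse a prescribed corridor $\Delta_0,\dots,\Delta_K$ is a global condition on the developed picture (the crossing intervals of all the developed edges must have nonempty common intersection), and hypothesis (3) only compares signs of wedges of \emph{pairs} of saddle connections --- you never identify which pairs to apply it to in order to force that global condition. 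Appealing to ``the compact theory applied to $K$'' produces a geodesic representative of the homotopy class, but on a translation surface such a representative can legitimately bend at cone points (at an infinite-angle singularity both sides of a bend can exceed $\pi$), so its existence does not finish the argument.

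The paper closes this gap by a different mechanism: an induction on the \emph{complexity} of a saddle connection $\tau\subset T$, defined as the number of crossings with ${\mathcal T}_T$. The key claim is that any $\tau$ of positive complexity is a diagonal of a convex quadrilateral $Q\subset T$ whose four sides are saddle connections of strictly smaller complexity; this is where the unfolding/developing-map proposition is used, to locate the two extreme vertices of the corridor and inscribe $Q$ in a convex region. Taking $\tau$ to be a counterexample of minimal complexity, the four sides of $Q$ pull back to saddle connections of $S$; convexity of a quadrilateral is precisely a sign condition on wedges of holonomies of consecutive sides, so hypothesis (3) forces the pulled-back quadrilateral to be convex; and a convex quadrilateral in a translation surface realizes both diagonals as saddle connections, one of which lies in $[h^{-1}(\tau)]$ --- a contradiction. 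This reduces the global visibility problem to a single four-sided configuration where (3) applies directly. To salvage your direct approach you would need an argument of comparable strength showing that the transported corridor in $S$ admits a straight traversal; as written, that step is missing.
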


In order to aid the proof of this lemma and a later lemma, we will prove a technical
proposition. To state this proposition, consider a translation surface $S$
triangulated by a collection of saddle connections $\sT=\{\sigma_i\}_{i \in \Lambda}$. 
Choose an integer $K \geq 1$.
Let $\Delta_0,\ldots, \Delta_K$ be triangles from the triangulation $\sT$,
and $\sigma_1,\ldots, \sigma_{K} \in \sT$ be saddle connections so that:
\begin{enumerate}
\item For each $j=1,\ldots, K$, we have that $\sigma_j$ is a common boundary of $\Delta_{j-1}$ and $\Delta_j$.
\item The sequence is non-backtracking, i.e. we never have
$\Delta_j=\Delta_{j+1}$ and we never have $\sigma_j = \sigma_{j+1}$. 
\end{enumerate}
Let $\til \Delta_0,\ldots, \til \Delta_K$ and $\til \sigma_1,\ldots, \til \sigma_j$ be lifts
of these objects to the universal cover branched over the singularities, $\til S$, 
which still satisfy statements (1) and (2) above. (We form such a cover by removing the singularties,
then taking the universal cover, and then placing the singularities back in the space.)
Choose a developing map, $\dev:\til S \to \R^2$, for the translation structure on $S$.
In analogy with polygonal billiards, we define the {\em unfolding} of $\gamma$ 
to be the image $U=\dev(\til \Delta_0 \cup \ldots \til \Delta_K)$. 
For each $j=1,\ldots,K$, let $s_j=\dev(\til \sigma_j)$ and let $I_j \subset \R$ denote the closed interval
\begin{equation}
\label{eq:I}
I_j=\{y~:~\text{there is an $x$ so that $(x,y) \in s_j$}\}.
\end{equation}
This situation is depicted in figure \ref{fig:saddleconnection}.

\begin{proposition}[Lifting developments]
\label{prop:dev}
Assume the notation of the previous paragraph. Let $I=\cap_{j=1}^K I_j$,
and assume that this is a non-degenerate closed interval. Define the subset of the unfolding
$V=\{(x,y) \in U: y \in I \}.$ Then,
\begin{itemize}
\item The set $V$ is convex.
\item Let $W=\dev^{-1}(V) \cap \bigcup_{j=0}^K \til \Delta_k$. The restriction
of the developing map, $\dev$, to $W$ is a homeomorphism onto $V$. 
\end{itemize}
\end{proposition}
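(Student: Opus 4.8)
The plan is to induct on $K$ (the number of saddle connections in the chain). Two elementary facts about the developing map are used throughout. Since each $\til\Delta_j$ is bounded by saddle connections, its edges develop to genuine line segments and its interior develops isometrically, so $\dev(\til\Delta_j)$ is a non-degenerate Euclidean triangle and $\dev|_{\til\Delta_j}$ is a homeomorphism onto it. Also, a neighbourhood in $\til S$ of the interior of an edge $\til\sigma_j$ develops homeomorphically into $\R^2$, and as $\til\Delta_{j-1}\neq\til\Delta_j$ the triangles $\dev(\til\Delta_{j-1})$ and $\dev(\til\Delta_j)$ lie in the two distinct closed half-planes bounded by the line $\ell_j$ through $s_j$. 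To make the induction close I would carry a strengthened statement which also records, for $t$ in the interior of $I$: (i) the abscissae $x_1(t),\dots,x_K(t)$ at which $\{y=t\}$ meets $s_1,\dots,s_K$ are weakly monotone in $j$, with direction independent of $t$; (ii) $V\cap\{y=t\}$ is a single horizontal segment, from the boundary crossing of $\dev(\til\Delta_0)$ by $\{y=t\}$ other than $(x_1(t),t)$ to the boundary crossing of $\dev(\til\Delta_K)$ other than $(x_K(t),t)$; and (iii) at each cut $\til\sigma_j$ the developed images of $\til\Delta_0\cup\dots\cup\til\Delta_{j-1}$ and of $\til\Delta_j\cup\dots\cup\til\Delta_K$ lie in opposite closed half-planes of $\ell_j$.

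For $K=1$ the triangles $\dev(\til\Delta_0)$ and $\dev(\til\Delta_1)$ share $s_1$ and lie on opposite sides of $\ell_1$. For $t\in I$ the slices $\dev(\til\Delta_0)\cap\{y=t\}$ and $\dev(\til\Delta_1)\cap\{y=t\}$ are intervals having $(x_1(t),t)$ as an endpoint and lying on opposite sides of it, so $V\cap\{y=t\}$ is one segment. For any convex region $R$ the function $t\mapsto\min\{x:(x,t)\in R\}$ is convex and $t\mapsto\max\{x:(x,t)\in R\}$ is concave; here the left endpoint of $V\cap\{y=t\}$ is the lower envelope of one of the two triangles and the right endpoint is the upper envelope of the other, so $V$, being the region between a convex graph and a concave graph over $I$, is convex. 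Since $V$ is closed, $W$ is a compact subset of $\til\Delta_0\cup\til\Delta_1$; $\dev$ maps $W$ onto $V$ by definition, and injectively because $\dev$ is injective on each triangle and the two images meet only along $s_1$; a continuous bijection from a compact space to a Hausdorff one is a homeomorphism.

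For the inductive step I would apply the strengthened statement to the sub-chain $\til\Delta_0,\dots,\til\Delta_{K-1}$ with interval $I'=I_1\cap\dots\cap I_{K-1}\supseteq I$ (non-degenerate since $I$ is), then intersect with the band $\{y\in I\}$ to obtain a convex set $V_{<K}$ on which $\dev$ is a homeomorphism and for which (i)--(iii) hold over $\operatorname{int}(I)$. Now reattach $\til\Delta_K$ across $\til\sigma_K$. The crucial point is that for $t\in\operatorname{int}(I)\subseteq\operatorname{int}(I_K)$ the boundary crossing of $\dev(\til\Delta_{K-1})$ by $\{y=t\}$ other than $(x_{K-1}(t),t)$ is precisely $(x_K(t),t)$; with (i) and (iii) for the sub-chain, this forces $(x_K(t),t)$ to be the extreme endpoint of $V_{<K}\cap\{y=t\}$ on the $\til\Delta_{K-1}$ side and to extend the monotone list $x_1(t),\dots,x_{K-1}(t)$. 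Since $\dev(\til\Delta_K)$ lies on the far side of $\ell_K$ from $\dev(\til\Delta_{K-1})$, its slice at height $t$ runs from $(x_K(t),t)$ to its other boundary crossing, on the far side of $x_K(t)$; hence $V\cap\{y=t\}$ is again a single segment, the union of $V_{<K}\cap\{y=t\}$ and $\dev(\til\Delta_K)\cap\{y=t\}$ glued at $(x_K(t),t)$, with left endpoint still the (convex) lower envelope of $\dev(\til\Delta_0)$ and right endpoint now the (concave) upper envelope of $\dev(\til\Delta_K)$, so $V$ is convex. For injectivity: $\dev$ is injective on $W_{<K}$ and on $\til\Delta_K$, and a point of $\til\Delta_K$ cannot develop to the image of a point of $\til\Delta_j$ with $j<K$, since $\dev(\til\Delta_K)$ meets $\dev(\til\Delta_{K-1})$ only along $s_K$ and is separated from all earlier images by the monotone ordering of the crossing points; compactness again turns the continuous bijection $W\to V$ into a homeomorphism.

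The real obstacle is exactly this global injectivity of $\dev$ along the chain: a priori the developed triangles could overlap between far-apart indices (the chain could wind back), and the monotone ordering of the crossing points along horizontal slices is precisely the invariant that excludes this, so propagating (i)--(iii) correctly through the induction is where the work lies. Two routine technical points also need attention: the clean slice description holds only for $t$ in $\operatorname{int}(I)$ away from the finitely many heights where consecutive crossing points coincide or where $\{y=t\}$ passes through a vertex of some developed triangle, so the conclusions for all of $I$ follow by a closure argument using $I=\overline{\operatorname{int}(I)}$ and continuity of the relevant envelope functions; and one uses repeatedly that the triangles of a triangulation are non-degenerate, so no $\dev(\til\Delta_j)$ degenerates to a line.
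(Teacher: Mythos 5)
Your proof is correct, and it rests on the same geometric insight as the paper's --- that the no-backtracking condition forces the developed chain of triangles to make monotone ``rightward'' progress across the band $\{y\in I\}$, which is exactly what rules out the winding-back you correctly identify as the real obstacle --- but you execute it quite differently. The paper gives a one-shot argument: it extends the segments $\til\sigma_1,\dots,\til\sigma_K$ to a family $\sF$ of non-horizontal segments through the vertices which locally foliates $\bigcup_j\til\Delta_j$, observes that the developed leaves sweep rightward, and then parametrizes points of $V$ by the pair (leaf of $\sF$, $y$-coordinate); convexity is read off by writing $V$ as the intersection of a horizontal strip with four half-planes bounded by the lines extending the outer edges of $\Delta_0$ and $\Delta_K$. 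You instead slice horizontally, track the crossing abscissae $x_1(t),\dots,x_K(t)$, and propagate their monotone ordering by induction on $K$, getting convexity from the region-between-a-convex-and-a-concave-graph description. Your version is more elementary and makes the injectivity mechanism completely explicit slice by slice, at the cost of the bookkeeping in the inductive step and the closure argument at $\partial I$; the paper's foliation argument is shorter but leaves the ``rightward progress implies leaves don't overlap'' step more implicit. Two small cautions on your write-up: your invariant (iii) must be read as a statement about the slices at height $t\in\operatorname{int}(I)$ (which the scoping of your sentence permits) --- as a statement about the full developed unions it is false, since the paper explicitly notes that $\dev$ restricted to $\bigcup_j\til\Delta_j$ is frequently not injective and the developed triangles can overlap outside the band; and in (i) you can in fact get strict monotonicity for $t\in\operatorname{int}(I)$, since consecutive crossing points can only coincide at the shared vertex of $s_j$ and $s_{j+1}$, whose height is an endpoint of $I_j$ and hence excluded from $\operatorname{int}(I)$ --- strictness is what your injectivity argument actually uses, so it is worth recording.
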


We note that the developing map restricted to $\bigcup_{j=0}^K \til \Delta_k$,
is frequently not a homeomorphism as illustrated by Figure \ref{fig:saddleconnection}.

\begin{figure}[ht]
\begin{center}
\includegraphics[width=4.5in]{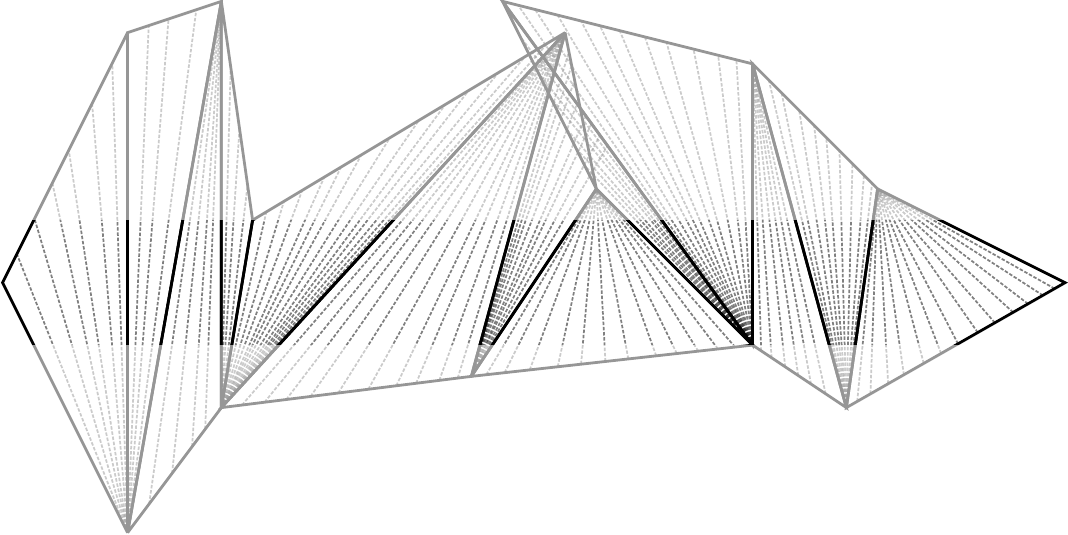}
\caption{The union of triangles $U$. The region $V \subset U$ is drawn darker than the rest of $U$. The developed image of the foliation $\sF$ relevant to the proof of Proposition \ref{prop:dev} is also shown.}
\label{fig:saddleconnection}
\end{center}
\end{figure}

\begin{proof}
Notice that the no backtracking assumption guarantees that the sequence
of triangles $\til \Delta_0,\ldots, \til \Delta_K$ on the universal cover branched cover, $\til S$, is non-repeating.
Extend the sequence of saddle connections
$\til \sigma_1, \ldots, \til \sigma_K$ to a family $\sF$ of non-horizontal line segments through vertices
of the triangles covering $\bigcup_{j=0}^K \til \Delta_j$ which is locally a foliation away from the vertices of the triangles. (The family is uniquely determined by the fact it passes through vertices
except for the first and last triangle, where there are two possible choices of
vertices for the segments to pass through. We can always make such choices so that each developed line segment is non-horizontal.)
Because the sequence of triangles is non-backtracking, the developed foliation always makes rightward progress as we sweep out the foliation in the direction of either increasing or decreasing triangle index. 
See figure \ref{fig:saddleconnection}. 
It follows that the image of the family $\sF$ under $\dev$ foliates the interior of $V$. 
Now observe that a point $p \in V$ is uniquely determined by a segment $\ell$ of $\sF$ so that $p \in \dev(\ell)$ and the $y$-coordinate of $p$. 
It follows that $\dev$ restricted to the preimage of $V$ inside of $\bigcup_{j=0}^K \til \Delta_k$ is one-to-one. This suffices to prove that the indicated restriction is a homeomorphism. 

The region $V$ can be described as the intersection of a horizontal strip
and four half planes bounded by lines extending the edges of $\Delta_0$ and $\Delta_K$. Thus it is convex. See figure \ref{fig:saddleconnection}.
\end{proof}

\begin{proof}[Proof of Lemma \ref{lem:same_saddles}]
Let ${\mathcal T}_S$ be the triangulation of $S$ by saddle connections given to us by statement (1) of the lemma. 
By statement (2), we we can straighten $h({\mathcal T}_S)$ to a triangulation ${\mathcal T}_T$ of $T$ by saddle connections. 

We define the {\em complexity} of a saddle connection $\tau \subset T$ relative to the triangulation ${\mathcal T}_T$ to be the number of
times $\tau$ crosses a saddle connection in ${\mathcal T}_T$. 
We assign the saddle connections in ${\mathcal T}_T$ complexity zero.

Suppose the conclusion of the lemma is false. Then, there exists at least one saddle connection $\tau \subset T$ so
that $[h^{-1}(\tau)]$ contains no saddle connection of $S$. We may choose such a saddle connection $\tau \subset T$ so that it
has minimal complexity with respect to ${\mathcal T}_T$. Since the saddle connections in the triangulation of $T$ came from saddle connections in $S$, 
this minimal complexity must be at least one. We will derive a contradiction from this assumption.

To derive the contradiction, we will prove the following claim. If $\tau$
is any saddle connection in $T$ whose complexity is greater than zero, then there is a convex quadrilateral $Q \subset T$ for which $\tau$ is a diagonal and whose edges are saddle connections with strictly smaller complexity than $\tau$. Here, {\em convex} means that $Q$ is isometric to a convex quadrilateral in $\R^2$. 

We will show how the claim implies the lemma. Let $\tau$ be a saddle connection in $T$ which is a counterexample of
minimal complexity. The claim gives a quadrilateral $Q \subset T$ with diagonal $\tau$.
Let $\nu_1, \ldots, \nu_4 \subset T$ denote the saddle connections which are the edges of $Q$. 
Since $\tau$ was a counterexample with minimal complexity, there are saddle connections 
$\hat \nu_1, \ldots, \hat \nu_4 \subset S$ in the homotopy classes $[h^{-1}(\nu_1)], \ldots, [h^{-1}(\nu_4])]$ respectively. Because
of statement (3), the saddle connections $\hat \nu_1, \ldots, \hat \nu_4$ in $S$ must form a strictly convex quadrilateral $\widehat Q \subset S$. 
The quadrilateral $\widehat Q$ must have diagonals, one of which lies in the homotopy class $[h^{-1}(\tau)]$. See figure \ref{fig:destroyedsaddle}.

\begin{figure}[ht]
\begin{center}
\includegraphics[width=2in]{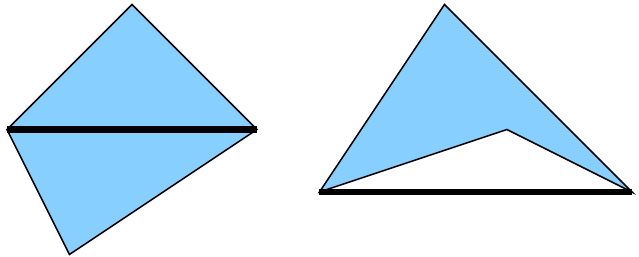}
\caption{To destroy a diagonal of a quadrilateral, the quadrilateral must be made non-convex. This violates property 3 of lemma 
\ref{lem:same_saddles}.}
\label{fig:destroyedsaddle}
\end{center}
\end{figure}

It remains to find $Q$ and prove the claim. Let $\tau \subset T$ be a saddle connection of complexity larger than zero. 
We may assume that $\tau$ is horizontal.
Let $\til \tau$ be a lift to the universal cover, $\til T$. 
Then $\til \tau$ crosses through a sequence of triangles in the triangulation, $\til \Delta_0, \ldots, \til \Delta_K$, which are lifts of triangles in the triangulation ${\mathcal T}_T$.
Consider a developing map $\dev: \til T \to \R^2$ so that $\dev(\til \tau)$
develops to a subset of the line $y=0$. 

As above Proposition \ref{prop:dev}, let $U=\dev(\bigcup_{j=0}^K \til \Delta_j)$.
Let $\til \sigma_1, \ldots, \til \sigma_K$ be the saddle connections between
the triangles, and define $s_j=\dev(\til \sigma_j)$. Then, there is are
endpoints $v_+$ and $v_-$ of one of these saddle connections so that the $y$-coordinates $y_+$ and $y_-$ of $\dev(v_+)$ and $\dev(v_-)$ satisfy 
$[y_-, y_+] = \bigcap_{j=1}^K I_j$. 
These are the vertices of $Q'$ which lie off the horizontal diagonal in Figure \ref{fig:saddleconnection2}.

\begin{figure}[b]
\begin{center}
\includegraphics[width=4.5in]{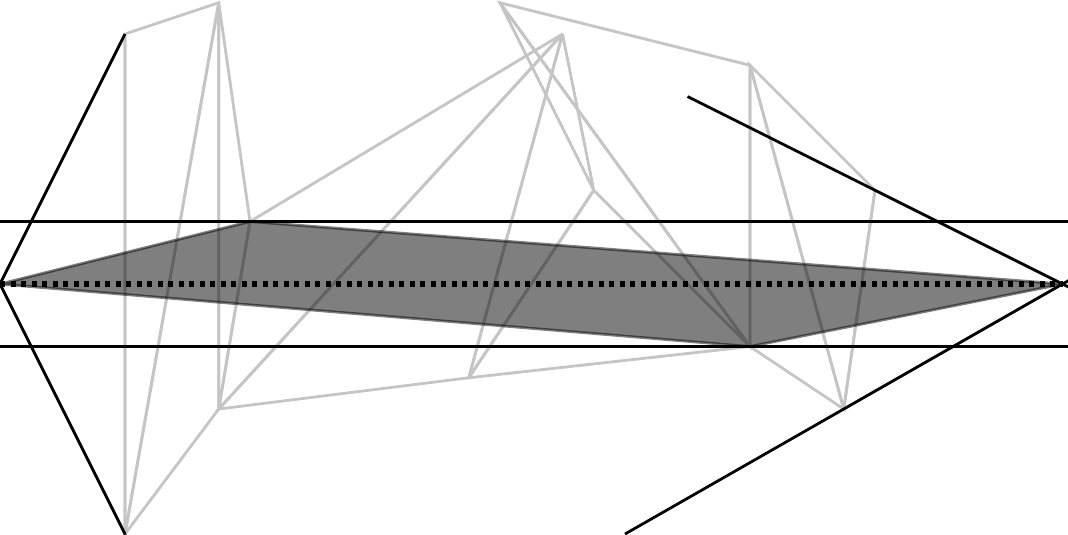}
\caption{The line segment $\dev(\tau)$ is drawn as a horizontal dotted line. The region $V$ is a convex hexagon bounded by black lines, with the gray region $Q'$ inscribed. This the same unfolding
that was shown in Figure \ref{fig:saddleconnection}.}
\label{fig:saddleconnection2}
\end{center}
\end{figure}

Let $V \subset U$ be the convex subset of $\R^2$ as in the proposition above. Let
$$\epsilon:V \to \dev^{-1}(V) \cap \bigcup_{j=0}^K \til \Delta_j$$ 
be the continuous map which is the inverse of the homeomorphism given by restricting $\dev$ as in the proposition. Let $\pi:\til S \to S$ be the universal covering map. Consider quadrilateral 
$Q' \subset V$ with diagonal is $\dev(\tau)$ and whose remaining vertices are given by $\dev(v_+)$ and $\dev(v_-)$. Observe that $Q'$ is inscribed in $V$. Thus, $Q'$ is also convex. Now define $Q=\pi \circ \epsilon(Q')$. This is the desired convex quadrilateral,
with diagonal $\tau$ and additional vertices $\pi(v_+)$ and $\pi(v_-)$. The edges of $Q$
are saddle connections which cross fewer triangles than $\tau$, 
so these saddle connections have smaller complexity. This proves the needed claim.
\end{proof}

The following proposition implies the classification of saddle connections, Theorem \ref{thm:classification_of_saddle_connections}.

\begin{proposition}
\label{prop:lemma_satisfied}
The homeomorphism $h_{c,c'}:S_c \to S_{c'}$ satisfies the conditions of Lemma \ref{lem:same_saddles}.
\end{proposition}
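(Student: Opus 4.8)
The plan is to verify the three numbered hypotheses of Lemma~\ref{lem:same_saddles} for the map $h_{c,c'}:S_c \to S_{c'}$, reducing everything by Proposition~\ref{prop:h} (the transitivity of the $h$'s) to the case $c'=1$, i.e. to $h_{c,1}:S_c \to S_1$; by symmetry of the lemma's hypotheses we will actually want to run the argument so that the conclusion transfers in both directions, but since Lemma~\ref{lem:same_saddles} already produces a two-sided statement it suffices to check the hypotheses once. Hypothesis~(1) is immediate: $S_c$ is triangulated by saddle connections --- one builds such a triangulation directly from the polygonal pieces $Q_c^\pm$, triangulating each as in the proof of Proposition~\ref{prop:h}, and the edges of that triangulation are geodesic segments between vertices (singularities) with no singularity in the interior.

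The heart of the matter is hypothesis~(2): every homotopy class $[h_{c,1}(\sigma)]$ of a saddle connection $\sigma \subset S_c$ contains a genuine saddle connection of $S_1$. First I would record the holonomy dictionary between the two surfaces. Using the homeomorphism $h_{c,1}$ built piecewise-affinely from the common triangulation, and using Lemma~\ref{lem:affine_automorphisms} (so that $\G_c^\pm \subset \Gamma(S_c)$), I want to identify, for a saddle connection $\sigma$ in direction $\theta$ on $S_c$, the direction $\varphi_c(\theta)$ on $S_1$ predicted by Proposition~\ref{prop:varphi_c}, and show that the homotopy class $[h_{c,1}(\sigma)]$ is exactly straightened by a saddle connection in that direction. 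The cleanest route: observe that a saddle connection on $S_c$ lies in a direction which is the $\G_c^\pm$-image of the horizontal or the slope-one direction (this is part of what Proposition~\ref{prop:lemma_satisfied} is feeding into Theorem~\ref{thm:classification_of_saddle_connections}, but for hypothesis~(2) we only need the easy inclusion: in the horizontal and slope-one directions $S_c$ decomposes into cylinders whose core curves and boundary saddle connections are explicit, and applying elements of $\Aff(S_c)$ --- equivalently of $\Aff(S_1)$ via the isotopic action Theorem~\ref{thm:isotopic_affine_action} --- moves these around). Concretely, for a saddle connection $\sigma$ on $S_c$, pick $\widehat G_c \in \Aff(S_c)$ with $\widehat G_c(\sigma)$ horizontal or slope-one; in those two directions $S_1$ manifestly has saddle connections (Proposition~\ref{prop:S_1_saddles}), and $\widehat G_1 \in \Aff(S_1)$ carries one of them to the homotopy class $[h_{c,1}(\sigma)]$ by the commutation $h_{c,1}\circ \widehat G_c \simeq \widehat G_1 \circ h_{c,1}$ from Theorem~\ref{thm:isotopic_affine_action}. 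This is where Proposition~\ref{prop:dev} does the real work: it guarantees that the straightening of $h_{c,1}(\sigma)$ actually is realized as an embedded geodesic arc --- the developed image of the relevant strip of triangles restricts to a homeomorphism onto a convex region, so the straight segment joining the endpoint singularities inside that region pulls back to an honest saddle connection, with no hidden self-intersection or singularity in its interior.

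For hypothesis~(3), the sign of the wedge product $\hol(\sigma_1)\wedge\hol(\sigma_2)$, the point is that $h_{c,1}$ acts on holonomy vectors, up to the $\G^\pm$-symmetry we have been exploiting, by the semiconjugacy $\varphi_c$, which by Proposition~\ref{prop:varphi_c} is a \emph{monotonic degree-one} map of $\Circ$. Monotonicity of $\varphi_c$ is exactly the statement that the cyclic order of rays is preserved, and the sign of the wedge product of two vectors is precisely a two-point cyclic-order datum; hence $\textit{sign}\big(\hol(\sigma_1)\wedge\hol(\sigma_2)\big)$ and $\textit{sign}\big(\hol(h_{c,1}\sigma_1)\wedge\hol(h_{c,1}\sigma_2)\big)$ agree, with the boundary case (wedge $=0$, i.e. parallel saddle connections) handled because $\varphi_c$ maps a direction to a single direction. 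I would make this precise by reducing, via an element of $\G_c^\pm$, to the situation where $\sigma_1$ is horizontal, and then arguing that the direction of $\sigma_2$ on $S_c$ and the direction of $h_{c,1}(\sigma_2)$ on $S_1$ lie on the same side of the horizontal because $\varphi_c$ fixes the horizontal ray and is monotone.

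The main obstacle I anticipate is hypothesis~(2) --- specifically, turning ``$h_{c,1}(\sigma)$ has the right direction after applying an affine automorphism'' into ``$[h_{c,1}(\sigma)]$ literally contains an embedded saddle connection.'' The affine automorphisms handle the \emph{direction} but not automatically the \emph{embeddedness}, and on an infinite-genus surface with infinite-angle cone points one must rule out that the straightened geodesic representative spirals into a singularity or crosses itself; this is exactly the scenario Proposition~\ref{prop:dev} is designed to exclude, so the crux of the write-up is to set up the finite strip of triangles crossed by a lift of $h_{c,1}(\sigma)$, check the no-backtracking hypothesis, invoke Proposition~\ref{prop:dev} to get a convex developed region containing both endpoint lifts, and conclude. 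Everything else is bookkeeping with the explicit matrices $A_c, D_c, E_c$ and the cylinder decompositions already computed in the proof of Lemma~\ref{lem:affine_automorphisms}.
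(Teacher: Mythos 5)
There is a genuine gap, and it is a circularity in your verification of hypothesis~(2). You choose to check the hypotheses of Lemma~\ref{lem:same_saddles} for the map $h_{c,1}:S_c\to S_1$, so hypothesis~(2) requires you to handle \emph{every} saddle connection $\sigma\subset S_c$. Your argument for this begins by picking $\widehat G_c\in\Aff(S_c)$ with $\widehat G_c(\sigma)$ horizontal or of slope one, i.e.\ it assumes that every saddle connection of $S_c$ ($c>1$) lies in a $\G^\pm_c$-image of the horizontal or slope-one direction. But for $c>1$ that classification is precisely Theorem~\ref{thm:classification_of_saddle_connections}, which is \emph{deduced from} Proposition~\ref{prop:lemma_satisfied} together with Lemma~\ref{lem:same_saddles}; it is not available yet, and it is not the ``easy inclusion'' you describe. (For $c>1$ the group $\G^\pm_c$ has a Cantor limit set, so there is no a priori reason the orbit of these two directions exhausts the saddle-connection directions; the only surface for which the directions are classified independently is $S_1$, via the integrality of holonomy in Proposition~\ref{prop:S_1_saddles}.) The repair is to run the reduction the other way: verify the three hypotheses for $h_{1,c}$, whose \emph{domain} is $S_1$. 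There every saddle connection is $\widehat G_1(\tau)$ with $\tau$ horizontal or slope one by Proposition~\ref{prop:S_1_saddles}; the horizontal and slope-one saddle connections are explicit edges and diagonals of the polygons $Q_c^\pm$ and visibly persist in the corresponding homotopy classes on $S_c$, and Theorem~\ref{thm:isotopic_affine_action} transports them by $\widehat G_c$. The statement for general $h_{c,c'}$ then follows because $h_{c,c'}=h_{1,c}^{-1}\circ h_{1,c'}$ and the class of maps satisfying the lemma is closed under inversion (by the lemma's own conclusion) and composition. Your appeal to Proposition~\ref{prop:dev} here is also unnecessary: in the base directions embeddedness is manifest, and affine automorphisms are homeomorphisms, so no unfolding argument is needed for hypothesis~(2).

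A smaller point on hypothesis~(3): the same circularity appears (you need to know the directions of saddle connections on $S_c$ to express them as $G_c(\alpha)$), and in addition $\varphi_c$ is only \emph{non-strictly} monotone --- it collapses the complementary intervals of the limit set --- so a monotone degree-one map only gives $\textit{sign}\big(\psi(\beta)\wedge\psi(\beta')\big)\in\{0,\textit{sign}(\beta\wedge\beta')\}$. You must rule out the collapse of a nonzero wedge to zero; the paper does this by observing that distinct saddle-connection directions are fixed points of distinct parabolic subgroups, and the semiconjugacy carries distinct parabolic fixed points to distinct parabolic fixed points. Your remark about the ``boundary case'' addresses parallel saddle connections upstairs but not this collapsing issue.
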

\begin{proof}
It is sufficient to prove that $h_{1,c}$ satisfies the conditions of the lemma, because we can write $h_{c,c'}=h_{1,c}^{-1} \circ h_{1,c'}$. See Proposition \ref{prop:h}. 
Note that if two homeomorphisms satisfy the lemma, then so does their composition. In addition, if $h$ satisfies the lemma, then so does $h^{-1}$ (by the conclusion of the lemma applied to $h$). So, we will restrict to the case of $h_{1,c}$. 

Statement (1) is trivial. We leave it to the reader to triangulate $S_1$. 

Statement (2) follows from propositions \ref{prop:S_1_saddles}. By proposition 
\ref{prop:S_1_saddles} all saddle connections of $S_1$ are the images of saddle connections in the horizontal and slope one directions
under $\G^{\pm}_1$. Observe that that for each saddle connection $\tau$ 
in the horizontal and slope one directions that appears in $S_1$, there is a saddle connection the homotopy class 
$\tau' \in [h_{1,c}(\tau)]$. Let $\sigma$ be an arbitrary saddle connection in $S_1$. Then, $\sigma=\widehat G_1(\tau)$ for some saddle connection $\tau$ of slope zero or one
and some $G \in \G^\pm$ with $\widehat G_1$ denoting the corresponding affine automorphism. 
Let $\tau' \in [h_{1,c}(\tau)]$ be the corresponding saddle connection in $S_c$. Then by Theorem \ref{thm:isotopic_affine_action},
$$\sigma'=\widehat G_c(\tau') \in \widehat G_c([h_{1,c}(\tau)])=[h_{1,c} \circ \widehat G_1(\tau)]=[h_{1,c} (\sigma)]$$
is the desired saddle connection in $S_c$. 

Now we show statement (3) holds.
Let $\sigma$ and $\sigma'$ be saddle connections in the surface $S_1$. 
Let $\theta_0=\{(x,0)~:~x>0\}$ and $\theta_1=\{(x,x)~:~x>0\}$ be horizontal and slope one rays in $\Circ$. 
Then we can choose $\alpha, \alpha' \in \{\theta_0, \theta_1\}$ and $G_1, G'_1 \in \G^\pm_1$ 
such that the holonomies of these saddle connections satisfy $\hol_1(\sigma) \in G_1(\alpha)$ and $\hol_1(\sigma') \in G'_1(\alpha')$.
It follows that the corresponding elements $G_c, G_c' \in \G^\pm_c$ satisfy
$\hol_c \circ h_{1,c}^{-1}(\sigma) \in G_c(\alpha)$ and $\hol_c \circ h_{1,c}^{-1}(\sigma')\in G_c'(\alpha')$. 
We must prove that 
$$\textit{sign} \big( G_1(\alpha) \wedge G_1'(\alpha') \big)=\textit{sign} \big( G_c(\alpha) \wedge G_c'(\alpha') \big),$$
where the sign of the wedge is computed using arbitrary representatives of the classes.
This follows essentially from  Proposition \ref{prop:varphi_c} which defined the the map $\varphi_c: \Circ \to \Circ$.
By this proposition, the statement above is equivalent to 
$$\textit{sign} \big( \varphi_c \circ G_c(\alpha) \wedge \varphi_c \circ G_c'(\alpha')  \big)=\textit{sign} \big( G_c(\alpha) \wedge G_c'(\alpha') \big).$$
Note that for any degree one continuous monotonically increasing map $\psi:\Circ \to \Circ$ which commutes with rotation by $\pi$ satisfies
$$\textit{sign} \big( \psi(\beta) \wedge \psi(\beta') \big) \in \{ 0, \textit{sign} (\beta \wedge \beta')\}$$
for every $\beta, \beta' \in \Circ$. In our setting, we have $G_c(\alpha) \wedge G_c'(\alpha') \neq 0$ if these two directions are fixed by different parabolic subgroups of $\G^\pm_c$.
Note that if the directions $G_1(\alpha)$ and $G_1'(\alpha')$ are distinct, then they are fixed by different parabolic subgroups of $\G^\pm_1$. 
Then, by the commutative diagram in Proposition \ref{prop:varphi_c}, the two directions $\varphi_c \circ G_c(\alpha)$ and $\varphi_c \circ G_c'(\alpha')$ are fixed by distinct parabolics subgroups of $\G^\pm_1$. Therefore $\varphi_c \circ G_c(\alpha) \wedge \varphi_c \circ G_c'(\alpha') \neq 0$. 
\end{proof}

Now we prove Theorem \ref{thm:isotopic_triangulations}, i.e. that the surfaces $S_c$ and $S_{c'}$ admit the same triangulations.

\begin{proof}[Proof of Theorem \ref{thm:isotopic_triangulations}]
Let $\{\sigma_i\}_{i \in \Lambda}$ is a disjoint collection of saddle connections in $S_c$ for $c \geq 1$ which triangulate the surface.
By Proposition \ref{prop:lemma_satisfied}, the homeomorphism $h_{c,c'}$ satisfies the conditions of Lemma \ref{lem:same_saddles}. 
So, we can find saddle connections $\sigma'_i \in [h_{c,c'}(\sigma_i)]$ for all $i$. 
The collection $\{\sigma'_i\}_{i \in \Lambda}$ is also a disjoint collection of saddle connections in $S_{c'}$ which triangulate the surface.

We define $h'_{c,c'}:S_c \to S_c'$ to be the homeomorphism which acts affinely on the triangles, and preserves the labeling of edges by $\Lambda$. We claim $h'_{c,c'}$ is isotopic to $h_{c,c'}$.
Because Lemma \ref{lem:same_saddles} is satisfied for the map $h_{c,c'}$ for all pairs of surfaces, we can always do the above construction. Therefore, we can think of $h'_{c,c'}$ as well defined for all $c \geq 1$ and $c' \geq 1$, and this family of homeomorphisms satisfies the conclusions of Proposition \ref{prop:h}. So, to see that $h_{c,c'}$ is isotopic to $h'_{c,c'}$, consider the isotopy given by $h_{c,c''} \circ h'_{c'',c'}$ as $c''$ varies between $c$ and $c'$. 
\end{proof}

We will now begin to study the coding of geodesics in order to prove Theorem \ref{thm:same_geodesics2}. We will again
prove these results by first proving a more general lemma. First we need a definition.

\begin{definition}[$h$-related directions]
Suppose $h:S \to S'$ is a homeomorphism between translation surfaces satisfying the three statements of
Lemma \ref{lem:same_saddles}. Let $\u$ and $\u'$ be unit vectors. We say
$\u$ is $h$-related to $\u'$ if for every saddle connection $\sigma$ on $S$, we have
$$\textit{sign} \big( \hol_S(\sigma) \wedge \u\big) = \textit{sign} \big( \hol_{S'}([h(\sigma)]) \wedge \u'\big).
$$
\end{definition}

Note that in this definition, the homotopy class $[h(\sigma)]$ contains a saddle connection $\sigma' \subset S'$
by the statements in the Lemma. Holonomy is homotopy invariant so we know
$\hol_{S'}([h(\sigma)]) =\hol_{S'}(\sigma').$ 
Therefore, $\u$ is $h$-related to $\u'$ if and only if $\u'$ is $h^{-1}$-related to $\u$.

Recall the definition of interior geodesics, their codes, and coding spaces given near Theorems \ref{thm:same_geodesics} and \ref{thm:same_geodesics2}.

\begin{lemma}
\label{lem:same_geodesics2}
Let $S$ be a translation surface with a collection of saddle connections
$\sT=\{\sigma_i\}_{i \in \Lambda}$ which triangulate the surface. 
Suppose $h:S \to S'$ is a homeomorphism to a translation surface $S'$ which
sends saddle connections in $\sT$ to saddle connections and 
satisfies the three statements of Lemma \ref{lem:same_saddles}. Let $\sT'=\{\sigma'_i=h(\sigma_i)\}_{i \in \Lambda}$ be the image saddle connections
which triangulate $S'$. Suppose that the unit vector $\u$ is $h$-related to the unit vector $\u'$.
Then the coding spaces $\Omega_{\u}, \Omega'_{\u'} \subset \Lambda^\Z$ which code geodesics 
in directions $\u$ and $\u'$ using these triangulations on $S$ and $S'$, respectively, are equal.
\end{lemma}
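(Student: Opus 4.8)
The plan is to show the two coding spaces are equal by showing, first, that every code of an interior geodesic in direction $\u$ on $S$ is also the code of an interior geodesic in direction $\u'$ on $S'$, and then appealing to symmetry (since $\u'$ is $h^{-1}$-related to $\u$) and taking closures. The central object will be the sequence of triangles that a geodesic passes through: an interior geodesic in direction $\u$ crosses a bi-infinite non-backtracking sequence of triangles $\Delta_n$ of $\sT$, glued along a bi-infinite sequence of saddle connections $\sigma_{e_n} \in \sT$, and this combinatorial data is exactly the code $\langle e_n \rangle$. What I want to prove is that the combinatorial data $\langle \Delta_n, e_n \rangle$ arising from an actual geodesic in direction $\u$ on $S$ is realizable as an actual geodesic in direction $\u'$ on $S'$, using the image triangulation $\sT'$, and conversely.

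First I would set up the ``straightening'' criterion: given a bi-infinite non-backtracking sequence of triangles and gluing edges in a triangulated translation surface, together with a direction $\w$, there is a geodesic in direction $\w$ realizing that sequence if and only if at every finite stage the developed images of the separating saddle connections, projected onto the line perpendicular to $\w$, have a common point of intersection — i.e. the nested intervals $I_j$ (as in equation \eqref{eq:I}, but measured perpendicular to $\w$ rather than horizontally) have nonempty intersection for every finite window, and moreover the bi-infinite intersection is a single point or at least contains a point whose geodesic avoids vertices. This is precisely the content of Proposition \ref{prop:dev}: the developing map restricted to the relevant union of lifted triangles is a homeomorphism onto the convex region $V$, so a geodesic exists through the window iff $I = \bigcap I_j$ is nondegenerate (or, for the limiting bi-infinite statement, nonempty). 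I would phrase the existence of an interior geodesic with prescribed combinatorics as: for every $N$, the finite intersection $\bigcap_{|j| \le N} I_j \neq \emptyset$, plus a mild genericity statement to avoid singularities, the latter following because only countably many directions hit a singularity after finitely many steps.

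The key step is then to translate the condition ``$\bigcap_{|j|\le N} I_j \neq \emptyset$'' into a statement about signs of wedge products, which is exactly what $h$-relatedness controls. For a finite window of triangles $\Delta_{-N}, \ldots, \Delta_N$ crossed by a geodesic in direction $\u$, the nonemptiness of the nested family $\bigcap I_j$ is equivalent to a finite list of sign conditions of the form $\textit{sign}(\hol(\mu) \wedge \u) = \pm 1$ and $\textit{sign}\big((\hol(\mu) - \hol(\nu)) \wedge \u\big) = \cdots$ for various saddle connections $\mu, \nu$ among the edges of the triangles in the window (equivalently, conditions saying which side of the line through $\u$ each developed vertex lies on, and that the ``upper'' vertices all lie above the ``lower'' vertices). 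By statement (3) of Lemma \ref{lem:same_saddles} applied to $h$, together with the definition of $h$-relatedness of $\u$ and $\u'$ — and noting that differences of holonomies of saddle connections in a triangle are again holonomies of saddle connections (the third edge of the triangle) — each such sign condition is preserved when one replaces $S, \sT, \u$ by $S', \sT', \u'$ and $\mu$ by the saddle connection in $[h(\mu)]$. Hence the geodesic realizing $\langle \Delta_n, e_n \rangle$ in direction $\u$ on $S$ forces the existence of a geodesic realizing $\langle \Delta_n', e_n \rangle$ in direction $\u'$ on $S'$ (a compactness/diagonal argument passes from all finite windows to the bi-infinite statement, intersecting a nested sequence of nonempty compact intervals). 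This gives $\Omega_{\u} \subseteq \Omega'_{\u'}$ on the level of codes of genuine interior geodesics; passing to closures and using the symmetric argument with $h^{-1}$ gives equality $\Omega_\u = \Omega'_{\u'}$.

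The main obstacle I anticipate is the careful bookkeeping in the key step: precisely identifying the finite set of sign conditions equivalent to ``$\bigcap_{|j|\le N} I_j \neq \emptyset$ and nondegenerate'' and checking that each one is expressible purely in terms of wedge products of holonomies of saddle connections (as opposed to arbitrary vectors), so that statement (3) and $h$-relatedness apply verbatim. In particular one must handle the two endpoint triangles of each finite window, where (as in the proof of Proposition \ref{prop:dev}) there is a choice of which vertex the bounding segments pass through; I would choose these consistently and check the relevant edges of $\Delta_{\pm N}$ are saddle connections of $\sT$, so their holonomies and pairwise differences are again holonomies of saddle connections. A secondary, more routine point is the genericity argument ensuring the realizing geodesic can be chosen to avoid singularities and to actually cross infinitely many triangles in both directions (so that it is genuinely \emph{interior}); since we are taking closures of coding spaces this can be arranged by perturbing within the nondegenerate interval $I$, or simply absorbed at the level of closures.
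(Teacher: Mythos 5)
Your overall architecture matches the paper's: reduce to one inclusion by symmetry, reduce to finite words, unfold the window of triangles via Proposition \ref{prop:dev}, and transfer sign conditions using statement (3) of Lemma \ref{lem:same_saddles} together with $h$-relatedness. But the key step as you describe it has a genuine gap, and you have flagged the right spot without resolving it. The condition that $\bigcap_{|j|\le N} I_j$ be nonempty and nondegenerate compares the $y$-coordinate of the top vertex of $s_j$ with that of the bottom vertex of $s_{j'}$ for indices $j,j'$ that may be far apart in the window. The vector joining these two developed vertices is a sum of holonomies along a path through many triangles; it is neither an edge of a triangle in the window nor the ``third edge'' of any single triangle, so your proposed fix (differences of holonomies within one triangle) does not put these conditions in a form where statement (3) applies. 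What makes the transfer work in the paper is precisely the convexity conclusion of Proposition \ref{prop:dev}: the straight segment joining the developed images of two such extremal vertices is a chord of the convex set $V$, hence pulls back under the inverse of $\dev|_W$ to a genuine saddle connection of $S$, and only then can statement (3) and $h$-relatedness be applied to its image to control the corresponding $y$-coordinates in $S'$.

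Even that is not quite the whole story. When the new vertex lies outside $V$ (the paper's case (3) in the inductive step), there is no such chord, and one must instead take the length-minimizing representative of a homotopy class of arcs joining the two singularities in the branched universal cover; this is a chain of upward-oriented saddle connections, and one argues the image chain under $h$ is still upward-oriented. The paper organizes all of this as an induction on the length of the word whose hypothesis carries not only nondegeneracy of $\bigcap I'_j$ but also the identity of the indices of the \emph{leading} top and bottom vertices; without that extra inductive data the comparison between the new vertex and the current extremal vertices cannot be set up, and the case analysis does not close. So the two missing ideas are: (i) use the convexity of $V$ (and, in the exterior case, geodesic representatives of homotopy classes) to manufacture the auxiliary saddle connections to which statement (3) applies, and (ii) strengthen the inductive hypothesis to track which vertices are extremal at each stage.
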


\begin{remark}
The condition that each $h(\sigma_i)$ be a saddle connection may be superfluous. The author is not sure if any homeomorphism satisfying Lemma \ref{lem:same_saddles}
is isotopic to a homeomorphism satisfying this condition.
\end{remark}

\begin{proof}
We will show that $\Omega_{\u} \subset \Omega'_{\u'}$. This suffices to prove the lemma,
because the conclusion of Lemma \ref{lem:same_saddles} implies that the statements of Lemma \ref{lem:same_saddles} are also
satisfied by $h^{-1}$. So the same argument will yield $\Omega'_{\u'} \subset \Omega_{\u}$.


To show that $\Omega_\u \subset \Omega'_{\u'}$, it suffices to show that every finite word $w$ which appears in $\Omega_u$ also appears in $\Omega'_{\u'}$.
We will actually prove a stronger statement by induction in the length of $w$. 
To give this statement, we need to describe some geometry.

We may assume by rotating the two surfaces that the unit vectors $\u$ and $\u'$
are both $(1,0)$. 
Let $w=w_1 \ldots w_K$ be a finite word which appears in $\Omega_u$. 
Then there is an interior geodesic $\gamma$ on $S$ in direction $\u=(1,0)$
whose code contains $w$ as a subword. Thus, we may choose an interval $[a,b] \subset \R$ so that $\gamma(a)$ 
and $\gamma(b)$ lie in the interior of triangles and $\gamma(t)$ passes over the sequence of saddle connections
$\sigma_{w_j}$ for $j=1,\ldots, K$ as $t$ runs from $a$ to $b$. 
We orient these saddle connections upward.
Let $\Delta_0,\ldots, \Delta_K$ be the sequence
of triangles passed through in this time. These definitions satisfy the considerations laid out above Proposition \ref{prop:dev},
i.e. the sequence is non-backtracking and the each saddle connection lies between adjacent triangles.

As above the proposition, we lift this picture to the universal cover $\til S$. 
We obtain a lifted geodesic $\til \gamma$,
lifts $\til \sigma_{1}, \ldots, \til \sigma_K$ of $\sigma_{w_1},\ldots,\sigma_{w_K}$ respectively, and lifts $\til \Delta_0,\ldots, \til \Delta_K$ of $\Delta_0,\ldots, \Delta_K$.
Consider the developing map $\dev: \til S \to \R^2$ 
defined so that $\dev \circ \til \gamma(t)=(t,0)$. Let $s_j=\dev(\til \sigma_j)$.
This is always a segment in the plane with one vertex with positive $y$-coordinate
and one vertex with negative $y$-coordinate. We call the former {\em the top vertex $T_j \in \R^2$} and the later {\em the bottom vertex $B_j \in \R^2$.}
We let $t_j$ and $b_j$ denote the endpoints of $\til \sigma_j$ for which 
$\dev(t_j)=T_j$ and $\dev(b_j)=B_j$. 
We define $I_j$ to be the closed interval of $y$-coordinates contained in $\dev(\sigma_j)$
as in equation \ref{eq:I}.
Observe that $\bigcap_{j=1}^K I_j$ contains zero in its interior because the unfolding contains 
$\dev \circ \til \gamma([a,b])$, a segment with $y$-coordinate zero.  
We say $T_j$ is a {\em leading top vertex}
if its $y$-coordinate is non-strictly smaller than all other $y$-coordinates of
top vertices. Similarly, we say $B_j$ is a {\em leading bottom vertex} if
its $y$-coordinate is non-strictly larger than all other $y$-coordinates of
bottom vertices. 

Now consider the surface $S'$. We have assumed that $h$ sends the saddle connections in the triangulation
of $S$ to saddle connections in $S'$. Let
$\gamma'=h \circ \gamma$. Then this (non-geodesic) curve in $S'$ crosses saddle connections in the sequence given by $\omega$ as well. Choose a lift $\til \gamma'$
of $\gamma'$ to the universal cover $\til S'$. We may choose a lift of the homeomorphism $h$ to a homeomorphism $\til h:\til S \to \til S'$ so that 
$\til \gamma'=\til h \circ \til \gamma$. Let $\til \sigma'_j=\til h(\til \sigma_j)$ and $\til \Delta'_j=\til h(\til \Delta_j)$. This is a sequence of triangles and saddle connections crossed by $\til \gamma'$ in $\til S'$. Let $t'_j=\til h(t_j)$ and $b'_j=\til h(b_j)$ satisfying the considerations laid out above Proposition \ref{prop:dev}. Choose a developing map 
$\dev':\til S' \to \R^2$ and define $T'_j=\dev'(t_j)$ and $B'_j=\dev'(b_j)$ and $s'_j=\dev'(\til \sigma'_j)$.
We again call the points $T'_j$ {\em top vertices} and call $T'_j$ a {\em leading top vertex} if its $y$-coordinate is non-strictly smaller than all other top vertices in the developing map for $S'$. We make the analogous definitions for bottom vertices. The saddle connections $\til \sigma'_j$ inherit an orientation from $\til \sigma_j$.
This is again the upward orientation, because we assumed $\u$ was $h$-related to $\u'$, and normalized so both these vectors were the vector $(1,0)$. In particular each $T_j$ has greater $y$-coordinate than each $B_j$. 

We remark that the homeomorphism $h$ must be orientation preserving because of statement (3) of Lemma \ref{lem:same_saddles} applied to the saddle connections in the triangulation $\sT$. The saddle connections
$\til \sigma'_j$ have an upward orientation, so the curve $\gamma'$ must move from the left side of each such saddle connection to the right side. Therefore, the sequence of developed triangles
$\dev(\Delta'_j)$ moves rightward as $j$ increases.

Our lemma follows from the following claim. Let $w$ be any word which appears in $\Omega_\u$. Let $K>0$
be the the length of $w$. Then the intersection $\bigcap_{j=1}^K I'_j$ is a non-degenerate closed interval.
Moreover, the set of indices of
leading top vertices in $S$ is the same as the set of indices of top vertices in $S'$ and the set of indices of leading bottom vertices in $S$ is the same as the set of indices of bottom vertices in $S'$.

First we will prove this statement for words of length one. Suppose $w=w_1$
is a word appearing in $\Omega_u$. Then $I_1$ is non-degenerate because $\til \sigma'_1$ has
an upward orientation as mentioned above. The collection of indices of leading top and bottom
vertices is necessarily just $\{1\}$ for both $S$ and $S'$ as there is only one saddle connection to consider.

Now assume the claim is true for all words of length $K$. We will prove it holds for words of length $K+1$.
Any such word can be written $w\lambda$ where $w$ is a word of length $K$, and $\lambda \in \Lambda$ is an additional letter. Then the edge $\til \sigma_{K+1}$ is a lift of $\sigma_{\lambda}$. The developed segment
$s_{K+1}$ shares one endpoint with the prior segment $s_K$, either a top vertex or a bottom vertex. By possibly reflecting in the horizontal, we may assume without loss of generality that the common segment is a bottom vertex, i.e. $B_{K}=B_{K+1}$. 
We consider three possible configurations for the top vertex $T_{K+1}$
(recalling that the $y$-coordinate of $T_{K+1}$ is larger than that of the bottom leader):
\begin{enumerate}
\item The vertex $T_{K+1}$ is the only leader for the word $w \lambda$. In this case, its $y$-coordinate is smaller than the $y$-coordinate of the leading top vertex for the subword $w$.
\item The vertex $T_{K+1}$ is a new top leader, sharing the role with some other top leaders. In this case, the $y$-coordinate of $T_{K+1}$ equals the $y$-coordinate of the leading top vertex (or vertices) for the subword $w$.
\item The top vertex $T_{K+1}$ is not a leader. That is, the $y$-coordinate of $T_{K+1}$ is larger than that of the leading top vertex for the subword $w$.
\end{enumerate}
We will prove the claim by analyzing each of these cases. We establish some common notation, before addressing these cases. Let $U$ and $U'$ be the unfoldings of the word $w$ (rather than $w\lambda$) on the surfaces $S$ and $S'$,
respectively. Let $L \in \{1,\ldots,K\}$ denote the largest index so that $T_L$
is a top leader of the unfolding $U$ of the word $w$ on $S$. By inductive hypothesis, $L$ is also the largest index so that $T'_L$ is a top leader of the unfolding $U'$ of the word $w$ on $S'$. Similarly, let $\ell\in \{1,\ldots,K\}$ be the largest index so that $B_\ell$ is a bottom leader. 
Let $V \subset U$ and $V' \subset U'$ be the convex regions guaranteed to exist by 
Proposition \ref{prop:dev}. Here to get $V'$ we are using the inductive hypothesis to ensure that the unfolding of the word $w$ satisfies the conditions of Proposition \ref{prop:dev}. Let $\epsilon$ denote the inverses of the homeomorphism obtained using the Proposition by restricting $\dev$ to $\dev^{-1}(V) \cap \bigcup_{i=0}^K \Delta_K$. 
Note that the segments $s_{K+1}=\overline{B_{K+1}T_{K+1}}$ and $s'_{K+1}=\overline{B'_{K+1}T'_{K+1}}$ appear in the boundary of $U$ and $U'$ respectively, as the saddle connections $\til \sigma_{K+1}$ and $\til \sigma'_{K+1}$ are edges of $\til \Delta_{K}$ and $\til \Delta'_{K}$. 

Consider case (1). In this case, the $y$-coordinate of $T_{K+1}$ lies strictly between the $y$-coordinates of $B_\ell$ and $T_L$. So, $T_{K+1} \in V$. Consider the segments 
$\overline{B_\ell T_{K+1}}$ and $\overline{T_L T_{K+1}}$. 
These are chords of the convex set $V$, and the first
is oriented upward and the second is oriented downward. 
See Figure \ref{fig:case1}. Then 
$\tau_B=\epsilon(B_\ell T_{K+1})$ and $\tau_T=\epsilon(\overline{T_L T_{K+1}})$
are saddle connections on $\til S$. Let $\tau'_B$ and $\tau'_T$ be the saddle connections in the homotopy class of $[\til h(\tau_B)]$ and $[\til h(\tau_T)]$
given by the assumption that the surfaces satisfy Lemma \ref{lem:same_saddles}.
The fact that $\u$ is $h$-related to $\u'$ implies that $\tau'_B$ 
is oriented upward while $\tau'_T$ is oriented downward. Consider the developments
$\dev'(\tau'_B)=\overline{B'_\ell T'_{K+1}}$ and $\dev'(\tau'_T)=\overline{T'_L T'_{K+1}}$. Because these segments have upward and downward orientations respectively, we know that the $y$-coordinate of $T'_{K+1}$ lies strictly between
the $y$-coordinates of $B'_\ell$ and $T'_L$. Thus $\bigcap_{j=1}^{K+1} I_j$ is the interval from the $y$-coordinate of $B'_\ell$ to the $y$-coordinate of $T'_{K+1}$,
and this is non-degenerate. In addition, $T_{K+1}$ will be the only new top leader for unfolding of the word $w\lambda$ on the surface $S'$. This verifies the claim in this case. 

\begin{figure}[t]
\begin{center}
\includegraphics[width=4in]{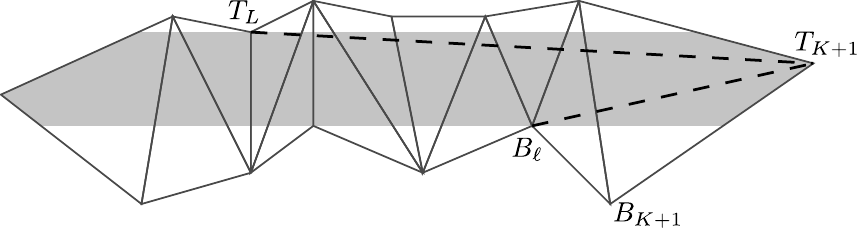}
\caption{An example unfolding $U$ in case (1) drawn with gray subset $V$ together with
segments $\overline{B_\ell T_{K+1}}$ and $\overline{T_L T_{K+1}}$.}
\label{fig:case1}
\end{center}
\end{figure}

In case (2), the argument is very similar. Here, we find that the segment
$\overline{T_L T_{K+1}}$ is horizontal and contains no other vertices.
We define saddle connections $\tau_T=\epsilon(\overline{T_L T_{K+1}})$ and $\tau'_T$
as above. By the same argument, we see that $\tau'_T$ must be horizontal. It follows
that $T'_{K+1}$ is a new leader, sharing the role with the prior leader or leaders
including $T'_L$. 

Case (3) is a bit more subtle. In this case, $y$-coordinate of $T_{K+1}$ is larger than that of the leading top vertex for the subword $w$. Consider the curve $\alpha$ in the unfolding joining $T_L$ to $T_{K+1}$ which runs from $T_L$ rightward to the triangle $\dev(\til \Delta_K)$ inside the set $V$ and then moves to $T_{K+1}$ while staying inside of $\dev(\til \Delta_K)$. We can lift $\alpha$ to a curve $\til \alpha$ in $\til S$ using $\epsilon$ and the identification between $\Delta_K$ and $\dev(\Delta_K)$. Let
$[\til \alpha]$ denote the homotopy class of all curves from $t_L$ to $t_{k+1}$
including $\til \alpha$. Recalling that $\til S$ was the universal cover branched over the vertices, the minimal length of a curve in this class is realized by some
curve $\beta$ which is a sequence of one or more upward oriented saddle connections contained in $\bigcup_{j=0}^K \til \Delta_K$. See Figure \ref{fig:case3}. Because $\u$ was $h$-related to
$\u'$ and because $h$ satisfies Lemma \ref{lem:same_saddles}, we know that we can straighten $\til h(\beta)$ to a chain of upward oriented
saddle connections $\beta'$ joining $t'_L$ to $t'_{K+1}$. We conclude that the $y$-coordinate of $T'_{K+1}$ is larger than the $y$-coordinate of $T'_L$ as desired.
\end{proof}

\begin{figure}[t]
\begin{center}
\includegraphics[width=4in]{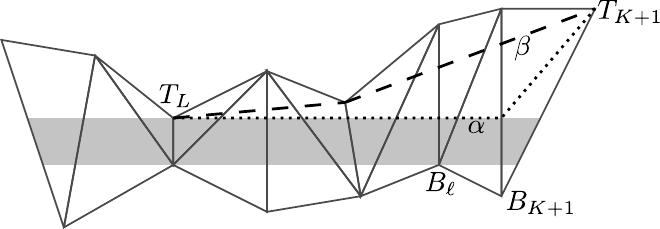}
\caption{An example unfolding $U$ in case (3) drawn with gray subset $V$ together with segments $\overline{B_\ell T_{K+1}}$ and $\overline{T_L T_{K+1}}$.}
\label{fig:case3}
\end{center}
\end{figure}

\begin{proof}[Proof of Theorem \ref{thm:same_geodesics2}]
Let $\varphi_c:\Circ \to \Circ$ be the continuous monotonic map of degree one constructed in Proposition \ref{prop:varphi_c}. This map restricts to a bijection on the directions containing saddle connections by Theorem \ref{thm:classification_of_saddle_connections}. It follows from this
and non-strict monotonicity that for each $\u \in \Circ$, we have that $\u'=\varphi_c(\u)$ is $h_{1,c}$-related to $\u$. The conclusion follows
by applying Lemma \ref{lem:same_geodesics2}. We can apply this lemma, because
Proposition \ref{prop:lemma_satisfied} states that the homeomorphism $h_{1,c}$ satisfies Lemma \ref{lem:same_saddles}, and Theorem \ref{thm:isotopic_triangulations},
guarantees that we can replace $h_{1,c}$ by an isotopic homeomorphism which sends saddle connections in the triangulation to saddle connections.
\end{proof}

\section{No other affine automorphisms}

The last step to the proof of Theorems \ref{thm:veech_groups} and \ref{thm:isotopic_affine_action}
is to demonstrate that all affine automorphisms of the surface lie in the group generated by the elements we listed.

\begin{lemma}
All affine automorphisms of the surface $S_c$ are contained in the group 
generated by $\widehat{-I}_c$, $\hat{A}_c$, $\hat{D}_c$, and $\hat{E}_c$.
\end{lemma}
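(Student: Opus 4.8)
The plan is to establish the equivalent inclusion $\Gamma(S_c)\subseteq\G_c^\pm$. Granting it, Lemma \ref{lem:affine_automorphisms} gives $\Gamma(S_c)=\G_c^\pm$; since Proposition \ref{prop:bijection} makes $\D:\Aff(S_c)\to\Gamma(S_c)$ a bijection and the derivatives of $\widehat{-I}_c,\hat A_c,\hat D_c,\hat E_c$ are exactly the generators $-I,A_c,D_c,E_c$ of $\G_c^\pm$, it follows that $\Aff(S_c)$ is precisely the group generated by these four automorphisms, which is the assertion. So fix $A\in\Gamma(S_c)$, realized by $\widehat A\in\Aff(S_c)$; I must show $A\in\G_c^\pm$.

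\emph{Step 1: reduce to the case that $A$ fixes the horizontal ray $\theta_0=\{(x,0):x>0\}$.} Since $\widehat A$ carries saddle connections to saddle connections, $A$ carries the set $\mathcal{D}_c\subset\Circ$ of directions containing a saddle connection into itself; in particular $A(\theta_0)\in\mathcal{D}_c$. By Theorem \ref{thm:classification_of_saddle_connections} (and Proposition \ref{prop:S_1_saddles} for $c=1$), every direction in $\mathcal{D}_c$ is $\G_c^\pm$-equivalent to $\theta_0$ or to the slope-one ray $\theta_1=\{(x,x):x>0\}$. Now $\widehat A$ maps the maximal cylinder decomposition of $S_c$ in direction $\theta_0$ onto the one in direction $A(\theta_0)$, and moduli of cylinders are affine invariants. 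By the computations in the proof of Lemma \ref{lem:affine_automorphisms}, every horizontal cylinder of $S_c$ has modulus $\tfrac12$, whereas every cylinder in direction $\theta_1$, hence in any $\G_c^\pm$-image of $\theta_1$, has modulus $\tfrac1{2c+2}\neq\tfrac12$. So $A(\theta_0)$ is not $\G_c^\pm$-equivalent to $\theta_1$, i.e. $A(\theta_0)=g(\theta_0)$ for some $g\in\G_c^\pm$; replacing $A$ by $g^{-1}A$, we may assume $A$ fixes $\theta_0$, and it suffices to prove $\textrm{Stab}_{\Gamma(S_c)}(\theta_0)\subseteq\G_c^\pm$.

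\emph{Step 2: the stabilizer of $\theta_0$.} Fixing $\theta_0$ means $A=\left(\begin{smallmatrix}\lambda & s\\ 0 & \mu\end{smallmatrix}\right)$ with $\lambda>0$ and $\mu\neq0$. The automorphism $\widehat A$ permutes the horizontal saddle connections and scales their lengths by $\lambda$; as this set of lengths is discrete with a positive minimum (it contains the horizontal connection $\overline{P_{-1}P_1}$ of holonomy $(2,0)$, and accumulates only at $\infty$), we get $\lambda=1$. Likewise $\widehat A$ permutes the horizontal cylinders, scaling circumferences by $\lambda=1$ and areas by $|\det A|=|\mu|$, hence heights by $|\mu|$; since the set of horizontal cylinder heights is also discrete with positive minimum, $|\mu|=1$. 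Thus $A=\left(\begin{smallmatrix}1 & s\\ 0 & \pm1\end{smallmatrix}\right)$; multiplying by $(-I)A_c=\left(\begin{smallmatrix}1 & 0\\ 0 & -1\end{smallmatrix}\right)\in\G_c^\pm$ when the sign is $-$ (this matrix fixes $\theta_0$) we may assume $A=\left(\begin{smallmatrix}1 & s\\ 0 & 1\end{smallmatrix}\right)$ is a horizontal parabolic. As in Proposition \ref{prop:parabolic}, $\widehat A$ then acts as a power of a single Dehn twist in each horizontal cylinder, the power in a cylinder of modulus $M$ being $sM$; since every horizontal cylinder has modulus $\tfrac12$, the fact that $\widehat A$ is a homeomorphism forces $s\in2\Z$, whence $A=D_c^{s/2}\in\G_c^\pm$. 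This finishes the proof.

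The technical heart is Step 2: ruling out a nontrivial scaling of the horizontal direction and forcing the residual shear into $2\Z$. The no-scaling argument relies on the spectrum of lengths of horizontal saddle connections, and of heights of horizontal cylinders, being discrete and bounded away from $0$; this can fail for infinite translation surfaces in general, so it must be read off from the explicit description of $S_c$ in Section \ref{sect:limit} together with the cylinder data in the proof of Lemma \ref{lem:affine_automorphisms}. The shear restriction is the converse direction of Proposition \ref{prop:parabolic} and is immediate once the horizontal moduli are known. For $c=1$ one could instead replace the moduli bookkeeping of Step 1 by the fact that $\Gamma(S_1)$ preserves the lattice $\Z^2$ of saddle-connection holonomies (Proposition \ref{prop:S_1_saddles}), but the argument above has the merit of treating all $c\geq1$ uniformly.
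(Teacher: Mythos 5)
Your overall architecture matches the paper's: use the classification of saddle connections to show $A(\theta_0)$ is a $\G_c^\pm$-image of the horizontal direction, reduce to the stabilizer of $\theta_0$, and then pin that stabilizer down using the horizontal cylinder decomposition. Step 2 is essentially sound (the paper does the same thing by inspecting the smallest horizontal cylinder; your ``forces $s\in2\Z$'' deserves one more sentence, since an affine self-map with parabolic derivative need not fix the cylinder boundaries pointwise --- one must track where the singularities on the boundary of the minimal cylinder go, which is exactly what the paper does, and which also handles the residual translation ambiguity).

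However, Step 1 contains a genuine error: moduli of cylinders are \emph{not} affine invariants. If $\widehat A$ has derivative $M$ and $C$ is a cylinder in direction $\theta$ with circumference $k$ and height $h$, then $\widehat A(C)$ has circumference $k\,|M\hat\theta|$ and height $h\,|\det M|/|M\hat\theta|$, so its modulus is $\frac{h}{k}\cdot\frac{|\det M|}{|M\hat\theta|^2}$, which differs from $h/k$ in general. (Concretely, on the square torus the horizontal cylinder has modulus $1$, while its image under $\left(\begin{smallmatrix}2&1\\1&1\end{smallmatrix}\right)\in\SL(2,\Z)$ is the cylinder in direction $(2,1)$ of modulus $1/5$.) Only the \emph{ratios} of moduli of parallel cylinders are preserved, and since both the horizontal and the slope-one decompositions of $S_c$ have all moduli equal, that invariant cannot distinguish the two orbits. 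So your argument that $A(\theta_0)$ is not $\G_c^\pm$-equivalent to $\theta_1$ does not go through. The conclusion is true, but you need a genuine affine invariant: the paper uses the fact that area ratios \emph{are} preserved (all areas scale by $|\det M|$), so the smallest-area maximal cylinder goes to the smallest-area maximal cylinder, and then distinguishes the two directions by the number of cone singularities on the boundary of that cylinder (two in the horizontal direction versus four in the slope-one direction), which is a topological invariant. Substituting that (or any other honest invariant, e.g.\ for $c=1$ the integrality of holonomy from Proposition \ref{prop:S_1_saddles}) repairs Step 1; the rest of your proof then stands.
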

\begin{proof}
Let us suppose that for some $c \geq 1$ there is an $M \in \GL(2, \R)$ in the Veech group $\Gamma(S_c)$ and
a corresponding element $\hat{M}$ in the affine automorphism group  $\Aff(S_c)$. We will prove that
$\hat{M}$ lies in the group generated by the four elements $\widehat{-I}_c$, $\hat{A}_c$, $\hat{D}_c$, and 
$\hat{E}_c$.

Let $\theta=\{(x,0)~:~x>0\} \in \Circ$ be the horizontal direction.
We know that the image $M(\theta)$ must contain the holonomies of saddle connections of $S_c$. Further more
the horizontal and slope one directions can be distinguished, since the smallest area maximal cylinder in the horizontal
direction has two cone singularities in its boundary, while the smallest area maximal cylinder in the slope one direction
has four cone singularities in its boundary. Thus, by theorem \ref{thm:classification_of_saddle_connections}, there must be an
element $N_c \in \G^\pm_c$ satisfying $M(\theta) =N_c(\theta)$. It follows that $N_c^{-1} \circ M$ preserves the 
horizontal direction. 

There must be a corresponding element element $\hat{N}_c^{-1} \circ \hat{M} \in \Aff(S_c)$ with derivative
$N_c^{-1} \circ M$. The automorphism must fix the decomposition into horizontal cylinders, and fix each cylinder
in the decomposition (because the cylinders have distinct areas). The smallest area horizontal cylinder is
isometric in each $S_c$. It is built from two triangles, the convex hull of $(0,0)$, $(1,1)$, and $(-1,1)$ and
the same triangle rotated by $\pi$, with diagonal sides of the first glued to the diagonal sides of the second 
by translation. $\hat{N}_c^{-1} \circ \hat{M} \in \Aff(S_c)$ must preserve this cylinder and permute the pair of cone singularities in the boundary. Therefore
$$N_c^{-1} \circ M=\left[\begin{array}{cc} 1 & 2 n \\ 0 & 1 \end{array}\right]=D_c^n
\quad \textrm{or} \quad
N_c^{-1} \circ M=\left[\begin{array}{cc} 1 & -2 n \\ 0 & -1 \end{array}\right]=-I \circ A_c \circ D_c^n$$
for some $n \in \Z$. Therefore, $M=N_c \circ D_c^n$ or $M=N_c \circ -I \circ A_c \circ D_c^n$, all of which
lie in $\G^\pm_c$. Therefore, by Proposition \ref{prop:bijection}, the corresponding affine automorphism
satisfies $\hat M=\hat N_c \circ \hat D_c^n$ or $\hat M=\hat N_c \circ \widehat{-I} \circ \hat A_c \circ \hat D_c^n$.
\end{proof}

\noindent{\bf Acknowledgements.} The author would like to thank the referee for many helpful comments and criticisms. He would also like to thank Samuel Leli\`evre for helpful comments involving the proof of Lemma \ref{lem:same_saddles}.

\bibliographystyle{amsalpha}
\bibliography{../bibliography}
\end{document}